\newcommand{\vectornorm}[1]{\left|\left|#1\right|\right|}
\newcommand{\R}{\mathbb{R}}
\newcommand{\usub}{\underline{u}}
\newcommand{\usup}{\overline{u}}
\def\ds{\displaystyle}
\newtheorem{theorem}{Theorem}[section]
\newtheorem{lemma}[theorem]{Lemma}
\newtheorem{proposition}[theorem]{Proposition}
\newtheorem{remark}[theorem]{Remark}
\newtheorem{definition}[theorem]{Definition}
\numberwithin{equation}{section}
\begin{document}

\title[Saddle-shaped solutions]{Qualitative properties of
saddle-shaped solutions to bistable diffusion equations}

\author{X. Cabr{\'e}}
\thanks{Both authors were supported by the Spain Research projects 
MTM2005-07660-C02-01 and MTM2008-06349-C03-01}
\address{ICREA and Universitat Polit{\`e}cnica de Catalunya,
Departament de Matem{\`a}-tica Aplicada I, Diagonal 647, 08028
Barcelona, Spain}

\email{xavier.cabre@upc.edu}

\author{J. Terra}
\thanks{The second author was supported by the FCT grant  SFRH/BD/8985/2002}
 \address{Universidad de Buenos Aires,
Departamento de Matem\'atica, Ciudad Universitaria, Buenos Aires, Argentina}

\email{jterra@dm.uba.ar}

\begin{abstract}
We consider  the elliptic equation $-\Delta u = f(u)$ 
in the whole $\R^{2m}$, where $f$ is of bistable type. 
It is known that there exists a saddle-shaped 
solution in $\R^{2m}$. This is a solution which changes sign in $\R^{2m}$ and 
vanishes only on the Simons cone ${\mathcal
  C}=\{(x^1,x^2)\in\R^m\times\R^m:  |x^1|=|x^2|\}$.  
It is also known that these solutions are  unstable in dimensions 2 and~4. 

In this article we establish that  when $2m=6$ every saddle-shaped solution
is unstable outside of every compact set and, as a consequence has
infinite Morse index. For this we establish the asymptotic behavior of
saddle-shaped solutions at infinity.
Moreover we prove the existence of a minimal and a maximal
saddle-shaped solutions and derive monotonicity properties for the
maximal solution. 

These results are relevant in connection with
a conjecture of De Giorgi on $1$D symmetry of certain solutions. 
Saddle-shaped solutions are
the simplest candidates, besides $1$D solutions, to be global minimizers in high dimensions,
 a property not yet established.
\end{abstract}

\maketitle

\section{Introduction and main results}

This paper concerns saddle-shaped solutions
to bistable diffusion equations
\begin{equation}\label{eq}
-\Delta u=f(u)\quad {\rm in }\,\R^{n},
\end{equation}
where $n=2m$ is an even integer. It is the follow-up to our previous article~\cite{CT}. 
Here, we study qualitative properties of saddle-shaped solutions,
such as their stability, asymptotic behavior, and monotonicity
properties. 

Our interest on these solutions originates from a conjecture
raised by De Giorgi~\cite{DG2} in $1978$.
It consists of establishing whether every bounded solution
$u$ of the Allen-Cahn equation
\begin{equation}\label{GL}
-\Delta u=u-u^3\quad{\rm in }\,\R^n
\end{equation}
which is monotone in one direction (say, for instance $\partial_{x_n} u>0$
in $\R^n$), 
depends only on one Euclidean variable (equivalently,
all its level sets are hyperplanes), at least if $n\leq 8$.
The conjecture has been proven to be
true when the dimension $n=2$ by Ghoussoub and Gui \cite{GG}, and when $n=3$ by
Ambrosio and Cabr\'e \cite{AC}. For $4\leq n\leq 8$ and assuming the additional
condition
\begin{equation}\label{limiting}
\lim_{x_n\rightarrow\pm\infty}u(x', x_n)=\pm 1
\qquad \text{for all }x'\in \R^{n-1},
 \end{equation}
it has been established by Savin \cite{OS}. 
Recently, del Pino, Kowalczyk, and Wei \cite{dP} have announced 
that the statement of the conjecture does not hold for $n\geq 9$ 
---as suggested in De Giorgi's original statement.
In addition, the monotone and non-flat solution that they construct
satisfies \eqref{limiting}.
However, for $4\leq n\leq 8$ the conjecture in its original statement
is still open, and to our knowledge no clear evidence is known about its validity
or not. That is:

\medskip

\noindent
\textbf{Open Question 1.}
For $4\leq n\leq 8$, does the conjecture hold in its original statement, 
that is, without assuming the limiting condition \eqref{limiting}?

\medskip

Next, we explain how assumption \eqref{limiting} enters in the proof of Savin's result,
and we state another version of the conjecture (Open Question~2 below). 
This new version will lead to the study of saddle-shaped solutions
and to an open problem possibly easier (or more natural) than
Open Question~1 above.
First, recall the following result.

\begin{theorem}[\textbf{Alberti-Ambrosio-Cabr\'e \cite{AAC}}]\label{alba}
Suppose that $u$ is a solution of \eqref{GL} satisfying $\partial_{x_n} u>0$
in $\R^n$ and the condition \eqref{limiting} on limits.
Then, $u$ is a global minimizer in $\R^n$. That is,
$$
{\mathcal E}(u,\Omega)\leq {\mathcal E}(u+\xi,\Omega)
$$ 
for every bounded domain $\Omega$ and every $C^{\infty}$ function
$\xi$ with compact support in $\Omega$. Here, ${\mathcal E}$
denotes the energy functional associated to \eqref{GL}.
\end{theorem}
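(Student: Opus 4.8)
The plan is a calibration argument resting on the observation that, under these hypotheses, the vertical translates of $u$ foliate the slab $\R^n\times(-1,1)$ in graph space, and each leaf of the foliation is a solution; once such a calibrating $n$-form is constructed, minimality of $u$ (indeed of every leaf) follows formally.

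First I would set $u^t(x):=u(x',x_n+t)$ for $t\in\R$, so that each $u^t$ solves \eqref{GL} and, since $\partial_{x_n}u>0$, the map $t\mapsto u^t(x)$ is strictly increasing for every fixed $x$; by the limiting condition \eqref{limiting} its range is exactly $(-1,1)$. Hence for each $(x,\lambda)\in\R^n\times(-1,1)$ there is a unique $t=T(x,\lambda)$ with $u^{T(x,\lambda)}(x)=\lambda$, and $T\in C^1$ by the implicit function theorem together with elliptic regularity of $u$. Writing $W(s):=\tfrac14(1-s^2)^2$ for the double-well potential (so $W'=-f$ and ${\mathcal E}(v,\Omega)=\int_\Omega\{\tfrac12|\nabla v|^2+W(v)\}\,dx$) and $y=y(x,\lambda):=(x',x_n+T(x,\lambda))$, I would introduce on $\R^n\times(-1,1)$ the coefficients
\[
a_i(x,\lambda):=\partial_i u(y),\qquad b(x,\lambda):=W(\lambda)-\tfrac12|\nabla u(y)|^2,
\]
and let $\omega$ be the $n$-form whose integral over a graph $\Sigma_v=\{(x,v(x)):x\in\Omega\}$ equals $\int_\Omega\{\sum_i a_i(x,v(x))\,\partial_i v(x)+b(x,v(x))\}\,dx$.

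The two things to verify are that $\omega$ is closed and that it calibrates ${\mathcal E}$. For closedness, differentiating the identity $u(y)=\lambda$ gives $\partial_{x_n}T=-1$, $\partial_{x_i}T=-\partial_i u(y)/\partial_n u(y)$ for $i<n$, and $\partial_\lambda T=1/\partial_n u(y)$; substituting these and using $\Delta u(y)=-f(u(y))=W'(\lambda)$, a direct computation yields $\sum_{i=1}^n\partial_{x_i}a_i=\partial_\lambda b$, which is precisely $d\omega=0$. For the calibration property, the pointwise identity $\tfrac12|a(x,\lambda)|^2+b(x,\lambda)=W(\lambda)$ is immediate from the definitions, so for any Lipschitz $v:\Omega\to[-1,1]$ Young's inequality $\sum_i a_i\,\partial_i v\le\tfrac12|\nabla v|^2+\tfrac12|a|^2$ gives $\int_{\Sigma_v}\omega\le{\mathcal E}(v,\Omega)$, with equality when $v=u^t$ for some $t$ (for then $\nabla v=a$ along $\Sigma_v$); in particular $\int_{\Sigma_u}\omega={\mathcal E}(u,\Omega)$.

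To conclude, given $\Omega$ and $\xi\in C^\infty_c(\Omega)$ I would first truncate $v:=u+\xi$ into $[-1,1]$, which does not increase ${\mathcal E}(\cdot,\Omega)$ and leaves the competitor unchanged near $\partial\Omega$ (there $u+\xi=u$ and $-1<u<1$). Since the homotopy $s\mapsto u+s\xi$, $s\in[0,1]$, connects $u$ to $v$ while fixing the graph over $\partial\Omega$, closedness of $\omega$ gives $\int_{\Sigma_u}\omega=\int_{\Sigma_v}\omega$, and combining with the calibration inequality, ${\mathcal E}(u,\Omega)=\int_{\Sigma_u}\omega=\int_{\Sigma_v}\omega\le{\mathcal E}(v,\Omega)={\mathcal E}(u+\xi,\Omega)$. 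The main obstacle is really the very first step: $\partial_{x_n}u>0$ by itself only produces limits $L_\pm(x')$ of $u(x',x_n)$ as $x_n\to\pm\infty$, and without \eqref{limiting} these need not equal $\pm1$ (nor even be independent of $x'$), in which case $\{u^t\}$ foliates only a proper subregion and the calibration---hence the minimality---breaks down; this is exactly why assumption \eqref{limiting} is needed. A minor technical point is that as $\lambda\to\pm1$ one has $T\to\pm\infty$, $\nabla u(y)\to0$ and $W(\lambda)\to0$, so $a_i,b\to0$ and $\omega$ extends continuously by zero to $\R^n\times[-1,1]$, which is what makes the last step legitimate for competitors attaining the values $\pm1$.
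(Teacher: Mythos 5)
The paper does not prove Theorem~\ref{alba}: it states it as a cited result, pointing to the original Alberti--Ambrosio--Cabr\'e argument in~\cite{AAC} (which the authors describe as ``quite involved and used calibrations'') and to a ``simple proof due to L.~Caffarelli'' in section~3 of~\cite{OS}. Your proposal is essentially a reconstruction of the AAC calibration proof, and the core of it is correct: the derivatives $\partial_{x_n}T=-1$, $\partial_{x_i}T=-\partial_iu(y)/\partial_nu(y)$, $\partial_\lambda T=1/\partial_nu(y)$ are right, and substituting them together with $\Delta u(y)=W'(\lambda)$ does give $\sum_i\partial_{x_i}a_i=\partial_\lambda b$, i.e.\ $d\omega=0$; the pointwise identity $\tfrac12|a|^2+b=W(\lambda)$ plus Young's inequality gives the calibration bound with equality on leaves, and Stokes between the two cobordant graphs (which coincide near $\partial\Omega$) yields minimality. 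What the paper's preferred reference buys instead is Caffarelli's more elementary argument, which avoids differential forms altogether: take a minimizer $v$ of $\mathcal E(\cdot,\Omega)$ with boundary data $u$, truncate it into $[-1,1]$, and slide the leaves $u^t$ (which sweep the whole slab precisely because of \eqref{limiting}) to sandwich $v$; the strong maximum principle at the first touching time forces $v\equiv u$. One small caution on your argument: near $\lambda=\pm1$ the form $\omega$ only extends \emph{continuously} by zero (uniform vanishing of $\nabla u(y)$ over the bounded set $\Omega$ needs Dini plus interior elliptic estimates), so Stokes has to be applied on $\{|\lambda|\le 1-\varepsilon\}$ and a limit taken, or one must separately note that on $\{|v|=1\}$ both sides of the calibration inequality vanish since $\nabla v=0$ a.e.\ there and $W(\pm1)=0$. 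You flag this, and it is indeed minor, but as written the ``$\omega$ extends continuously, hence the last step is legitimate'' is not quite a complete justification of Stokes up to $\lambda=\pm1$. Finally, your closing observation that without \eqref{limiting} the translates $u^t$ need not foliate the full slab, so the calibration fails, is exactly the right diagnosis of where the hypothesis enters.
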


Recall that the energy functional associated to equation \eqref{eq} is
\begin{equation}\label{energia}
{\mathcal E}(v,\Omega):=\int_\Omega \left\{ \frac{1}{2} |\nabla
v|^2+G(v)\right\} dx,\qquad\text{where } \, G'=-f.
\end{equation}
See \cite{AAC} for the original proof of the Theorem~\ref{alba}
(which was quite involved and used calibrations), and section~3 of
\cite{OS} for a simple proof due to L.~Caffarelli.
Now we can state a deep result of Savin \cite{OS}.

\begin{theorem}[\textbf{Savin \cite{OS}}]\label{teosavin}
Assume that $n\leq 7$ and that $u$ is a global minimizer of \eqref{GL} in $\R^n$. 
Then, the level sets of $u$ are hyperplanes.
\end{theorem}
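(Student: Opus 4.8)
The plan is to combine a blow-down argument, the Bernstein-type theorem for minimal surfaces in low dimensions, and a De Giorgi--type improvement-of-flatness iteration for equation \eqref{GL}.

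First I would pass to the blow-down. Since $u$ is a global minimizer, comparison with suitable competitors gives the density bound ${\mathcal E}(u,B_R)\leq C R^{n-1}$ for all $R\geq 1$. Setting $u_R(x):=u(Rx)$, one checks that $R^{1-n}{\mathcal E}(u,B_R)$ equals the Modica--Mortola functional $\int_{B_1}\{\tfrac{1}{2R}|\nabla u_R|^2+R\,G(u_R)\}\,dx$, whose $\Gamma$-limit as $R\to\infty$ is (a multiple of) the perimeter. Using the $\Gamma$-convergence theory together with the fact that global minimizers converge to global minimizers, one extracts a subsequence so that $u_R\to \chi_E-\chi_{E^c}$ in $L^1_{\rm loc}$, where $\partial E$ is a perimeter-minimizing hypersurface (a minimal surface) in $\R^n$. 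The second step is to invoke that in dimension $n\leq 7$ every perimeter-minimizing hypersurface in $\R^n$ is a hyperplane: blowing down $\partial E$ once more produces a minimizing cone in $\R^n$, which by Simons' theorem and the Bordiga--De Giorgi--Giusti analysis must be a hyperplane for $n\leq 7$; the constancy of the monotonicity quotient then forces $\partial E$ itself to be a hyperplane. This is precisely where the hypothesis $n\leq 7$ is used. Translating back, the level sets of $u$ are \emph{flat at every large scale}: for each $\varepsilon>0$ and each direction-independent constant, once $R$ is large enough a fixed level set $\{u=\lambda\}\cap B_R$ is trapped in a slab $\{|x\cdot e|\le\varepsilon R\}$.

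The heart of the matter is the third step: an improvement-of-flatness (or $\varepsilon$-regularity) theorem for solutions of \eqref{GL}. One shows there exist $\varepsilon_0>0$ and $\theta\in(0,1)$ such that if a bounded solution $u$ has a level set contained in a slab $\{|x\cdot e|\leq \varepsilon \ell\}$ in $B_\ell$, with $\varepsilon\leq\varepsilon_0$ and $\ell\geq\ell_0(\varepsilon)$, then in $B_{\theta\ell}$ that level set lies in a thinner slab $\{|x\cdot e'|\leq \theta\varepsilon\ell\}$ for some new unit vector $e'$. This is proved by compactness and contradiction: normalizing a hypothetical sequence of counterexamples, the rescaled level sets converge to a solution of a limiting linear (Jacobi-type, essentially harmonic) equation, for which flatness improves by classical interior estimates; making this limit rigorous requires a Harnack-type inequality for \eqref{GL}, which is the genuinely new PDE input. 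Combining with the blow-down step, the flatness hypothesis holds at all sufficiently large scales, and iterating the flatness-improvement theorem \emph{downwards} to all scales shows that the normal to the level set has vanishing oscillation, i.e. $\{u=\lambda\}$ is a $C^{1,\alpha}$ hypersurface that is in fact a hyperplane; since $\lambda$ was arbitrary, all level sets of $u$ are parallel hyperplanes.

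The main obstacle is clearly this third step. The Harnack inequality and the improvement-of-flatness theorem for \eqref{GL} constitute Savin's central and most delicate contribution---a nonlinear analogue of De Giorgi's regularity theory for minimal surfaces---requiring careful barrier constructions, sliding arguments, and a subtle iteration; the blow-down and Bernstein inputs, by contrast, are by now standard. We refer to \cite{OS} for the complete argument.
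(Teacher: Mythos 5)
The paper does not prove this statement; it is quoted verbatim from Savin's work \cite{OS} and used as a black box, so there is no internal proof to compare against. That said, your outline is a faithful high-level summary of Savin's actual argument: the blow-down through the Modica--Mortola $\Gamma$-limit to a perimeter-minimizing set, the Bernstein-type classification of minimizing hypersurfaces in $\R^n$ for $n\leq 7$ via Simons' theorem and Bombieri--De Giorgi--Giusti (note the spelling---you wrote ``Bordiga''), and the improvement-of-flatness iteration driven by a Harnack-type inequality adapted to the Allen--Cahn equation. You are also right to identify the third step as the decisive new PDE input and to defer its details to \cite{OS}: that is exactly the posture this paper takes. One small caution on the last step of your write-up: the improvement of flatness applied to the zero level set yields that $\{u=0\}$ is a hyperplane, and $1$D symmetry of $u$ (hence flatness of \emph{all} level sets) then follows from the sliding/monotonicity arguments in \cite{OS} rather than by applying the flatness scheme separately to each $\lambda$, since the flatness at large scales is only obtained directly for the interfacial level.
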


Note that this result makes no assumptions on the monotonicity or
the limits at infinity of the solution. Now, Savin's result on monotone
solutions needs to assume \eqref{limiting} only to guarantee, by
Theorem~\ref{alba}, that the solution is actually a global minimizer.
Then, Theorem~\ref{teosavin} (and the gain of one dimension
$n=8$ thanks to the monotonicity of the solution)
leads to Savin's result on monotone solutions with limits $\pm 1$.

Again, Theorem~\ref{teosavin} makes no additional assumption on the solutions
(minimizers here). It establishes that in dimensions $n\leq 7$, $1$D solutions
(i.e., solutions depending only on one Euclidean variable)
are the only global minimizers of \eqref{GL}. In addition, the hypothesis 
$n\leq 7$ on its statement is believed to be sharp ---we will 
explain this later in more detail.
That is, in $\R^8$ one expects the existence of a global minimizer
which is not $1$D.

\medskip

\noindent
\textbf{Open Question 2.}
Is there a global minimizer of \eqref{GL}
in $\R^8$ whose level sets are not hyperplanes? Related to this,
it will be natural to ask the following. Are saddle-shaped solutions (as defined below)
global minimizers of \eqref{GL} in $\R^8$, or at least in higher even dimensions?

\medskip

A positive answer
to this last question would give an alternative way 
to that of \cite{dP} to prove the existence of a counter-example
of the conjecture of De Giorgi in $\R^9$. Indeed, saddle-shaped
solutions are even functions of each coordinate $x_i$. Thus,
by a result of Jerison and Monneau~\cite{JM} (Theorem~\ref{JM} below, in the
next section), if a saddle solution
were a global minimizer in $\R^{2m}$, then the conjecture of De Giorgi
on monotone solutions would not hold in $\R^{2m+1}$.

Let us explain why dimension $n=8$, and also saddle-shaped solutions,
play an important role.
By a connection of variational nature between equation~\eqref{GL}
and the theory of minimal surfaces (see \cite{AAC,JM,OS}), every level set
of a global minimizer should converge at infinity to the boundary of a minimal 
set ---minimal here in
in the variational sense, that is, minimizing perimeter. 
See \cite{OS} for precise statements.
Now, a deep theorem (mostly due to Simons~\cite{Si}; see
Theorem~17.3 of~\cite{G})
states that the boundary of
a minimal set in all of $\R^n$ must be a hyperplane if $n\leq 7$.
Instead, in $\R^8$ and higher dimensions, there exist minimal sets different
than half-spaces. The simplest example is the Simons cone,
as proved by Bombieri-De Giorgi-Giusti~\cite{BGG}.

The Simons cone is defined by
\begin{equation}\label{Sim}
{\mathcal C} = \{x\in\R^{2m}: x_1^2 + x_2^2 + \cdots +
x_m^2=x_{m+1}^2 + x_{m+2}^2 + \cdots + x_{2m}^2\}.
\end{equation}
It is easy to
verify that ${\mathcal C}$ has zero mean curvature at every
$x\in{\mathcal C}\backslash\{0\}$, in every dimension $2m\geq 2$.
However, it is only in dimensions $2m\geq 8$ that ${\mathcal C}$ is in addition a
minimizer of the area functional, i.e., it is a minimal cone 
in the variational sense. For all these questions, see the book of Giusti~\cite{G}.
The recent paper \cite{DPP} contains a short proof of the minimality
of the Simons cone when $2m\geq 8$.  
Later in this introduction we will also make some comments on the Morse index of the 
Simons cone depending on the dimension. 

Let us also mention here that for another variational problem (a one-phase
free boundary problem for harmonic functions), a similar program has been undertaken by Caffarelli-Jerison-Kenig~\cite{CJK}
and De~Silva-Jerison~\cite{DSJ}. They have established, respectively, the smoothness
of minimizers in dimension $n\leq 3$ and the existence of a non-smooth
global minimizer in $\R^7$ ---the dimensions in between being 
still an open question.

Saddle-shaped solutions to the bistable diffusion equation are closely related 
to the Simons cone, as follows.
For $x=(x_1,\dots, x_{2m})\in\R^{2m}$, let us define two
radial variables $s$ and $t$ by
\begin{equation}\label{coor}
\left\{\begin{array}{rcll} s& = & {\ds \sqrt{x_1^2+...+x_m^2}}& \geq 0\\
 t & = &{\ds \sqrt{x_{m+1}^2+...+x_{2m}^2}}& \geq 0.
\end{array}
\right.
\end{equation}
The Simons cone is given by 
$$
{\mathcal C}=\{s=t\}=\partial {\mathcal O}, \quad\text{ where }
{\mathcal O}=\{s>t\}.
$$ 
The following is the notion of saddle solution, which we introduced in \cite{CT}. 

\begin{definition}\label{def} 
{\rm Let $f\in C^1(\R)$ be odd. We  say  that 
$u:\R^{2m}\rightarrow\R$ is a} {\it saddle-shaped
solution} {\rm  (or simply a saddle solution) of 
\begin{equation}\label{eq2m}
-\Delta u=f(u) \quad {\rm in }\;\R^{2m}
\end{equation}  if $u$ is a bounded solution of
\eqref{eq2m} and, with $s$ and $t$ defined by \eqref{coor},}
\renewcommand{\labelenumi}{$($\alph{enumi}$)$}
\begin{enumerate}
\item{\rm  $u$ depends only on the variables $s$ and $t$. We write
$u=u(s,t)$; 
\item $u>0$ in ${\mathcal O}:=\{s>t\}$; 
\item  $u(s,t)=-u(t,s)$ in $\R^{2m}$.}
\end{enumerate}
\end{definition}

Saddle-shaped solutions should be relevant in connection with 
Theorem \ref{teosavin} and Open Question 2 above on minimizers of the bistable
diffusion equation due to the different variational
properties of their zero level set (the Simons cone) depending on the dimension
---together with the connection between the diffusion equation and minimal
surfaces. Note also that saddle solutions are
even with respect to each coordinate $x_i$, $1\le i\le 2m$, as
in the result of Jerison-Monneau ---Theorem \ref{JM} below, in section 2.

On the other hand, the conjecture of De Giorgi and Open Question 1 on monotone
solutions are related to minimal graphs ---instead of minimal cones or minimal sets.
The existence of minimal graphs (of functions $\varphi : \R^k \to \R$) 
different than hyperplanes is also well understood. They exist
only when the dimension $k\geq 8$. The simplest one was built by
Bombieri-De Giorgi-Giusti~\cite{BGG} for $k=8$ and has the Simons cone
as zero level set. This minimal graph (living in $\R^9$) is used in
\cite{dP} to construct the counter-example to the conjecture of
De Giorgi in $\R^9$. Note that the Simons cone in $\R^8$ is a variety
of dimension 7, while the previous graph is of dimension 8.

Towards the complete understanding and characterization of global minimizers
(see Open Question 2),
we study saddle-shaped solutions and their qualitative
properties. 
To state our precise results, given a $C^1$ nonlinearity  
$f:\R\rightarrow\R$ and $M>0$, define 
\begin{equation}\label{defG}
G(u)=\int_u^M f.
\end{equation}
We have that
$G\in C^2(\R)$ and $G'=-f$. 
For some $M>0$, and with $G$ defined as above, we assume that
\begin{equation}\label{H}
\left\{\begin{array}{l}
 f \text{ is odd in } \R \\
 G\geq 0=G(\pm M) \text{ in } \R \text{ and } G>0 \text{ in } (-M,M); \\
 f' \text{ is decreasing in } (0,M).
\end{array}\right.
\end{equation}
In Section~2 we comment further these hypothese on $f$.
They are satisfied by $f(u)=u-u^3$, for which
$G(u)=(1/4)(1-u^2)^2$ and $M=1$.

In \cite{CT} we defined saddle-shaped solutions as above
and proved their existence in all even dimensions. 
Namely, we proved:

\begin{theorem}[\cite{CT}]\label{exis} 
Let $f\in C^1(\R)$ satisfy conditions \eqref{H} for some constant $M>0$,
where $G$ is defined by \eqref{defG}. Then, for every even dimension $2m\geq 2$,
there exists a saddle-shaped solution $u\in C^2(\R^{2m})$ 
of $-\Delta u= f(u)$ in~$\R^{2m}$, as in Definition~\ref{def}.
\end{theorem}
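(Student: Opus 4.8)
The plan is to reduce the PDE in $\R^{2m}$ to an ODE-type problem in the quarter-plane and solve it variationally. First I would pass to the variables $(s,t)$ from \eqref{coor}: since a function depending only on $s$ and $t$ has Laplacian
\begin{equation*}
\Delta u = u_{ss} + u_{tt} + \frac{m-1}{s}\, u_s + \frac{m-1}{t}\, u_t,
\end{equation*}
equation \eqref{eq2m} becomes an elliptic equation in the open quadrant $Q=\{s>0,\ t>0\}$, degenerate on $\{st=0\}$, with weight $(st)^{m-1}$. The natural function space is the weighted Sobolev space associated with the energy
\begin{equation*}
{\mathcal E}(v) = \int_{Q}\left\{ \tfrac12\bigl(v_s^2+v_t^2\bigr) + G(v)\right\} (st)^{m-1}\, ds\, dt,
\end{equation*}
so a saddle solution should be obtained as a critical point of this functional on the class of functions that are odd under the reflection $(s,t)\mapsto(t,s)$ and satisfy suitable Dirichlet data.

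Second, I would construct the solution by a bounded-domain approximation. In the triangle $\Omega_R = \{(s,t)\in Q : s^2+t^2 < R^2,\ s>t\}$ (the intersection of the ball of radius $R$ with $\mathcal O$), minimize ${\mathcal E}$ among $H^1$-functions with zero boundary values on the diagonal $\{s=t\}$ and value $M$ (or a free/Neumann condition) on the curved part $\{s^2+t^2=R^2\}$; the minimizer $u_R$ exists by the direct method, is nonnegative by replacing $u_R$ with $|u_R|$, and lies in $[0,M]$ because $G$ is minimized at $M$ and truncation decreases energy, using that $f'$ is decreasing hence $f(M)=0$ and $f>0$ on $(0,M)$ so that $M$ is a supersolution. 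Then extend $u_R$ to all of $Q$ by odd reflection across $\{s=t\}$; because the boundary data is $0$ on the diagonal, this reflection produces a weak (hence, by elliptic regularity, classical $C^2$) solution of \eqref{eq2m} in the ball $B_R$, which moreover depends only on $(s,t)$ and is odd under $s\leftrightarrow t$. A strong maximum principle argument gives $u_R>0$ strictly in $\mathcal O \cap B_R$, since $u_R\not\equiv 0$ (the zero function is not a local minimizer: $G>0$ on $(-M,M)$ forces a sign-changing competitor to have lower energy — this is where oddness of $f$ and the shape of $G$ are essential).

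Third, I would pass to the limit $R\to\infty$. Interior elliptic estimates, uniform in $R$ because $0\le u_R\le M$ and $f\in C^1$, give $C^2_{\mathrm{loc}}$ bounds; extract a subsequence converging in $C^2_{\mathrm{loc}}(\R^{2m})$ to a bounded solution $u$ of \eqref{eq2m} with $|u|\le M$. The symmetry properties (a) and (c) of Definition~\ref{def} are preserved under locally uniform limits. The only property that is not automatically inherited is (b): the limit could in principle be identically zero on all of $\mathcal O$, or vanish somewhere in $\mathcal O$. To prevent degeneration I would establish a uniform lower bound — for instance, show $u_R \ge \underline{u}$ on a fixed region for $R$ large, where $\underline{u}$ is an explicit subsolution built from a one-dimensional heteroclinic profile $\psi$ of $-\psi'' = f(\psi)$, $\psi(\pm\infty)=\pm M$, evaluated at the signed distance to the Simons cone; this uses conditions \eqref{H} in an essential way and is the standard device for guaranteeing that the saddle solution genuinely changes sign. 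Once $u>0$ somewhere in $\mathcal O$, the strong maximum principle applied to the equation for $u$ (which satisfies $-\Delta u - c(x) u = 0$ with $c$ bounded, in the connected domain $\mathcal O$, and $u\ge 0$ there by (b) inherited as $u_R\ge 0$ in the limit) upgrades this to $u>0$ in all of $\mathcal O$, giving (b).

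The main obstacle is precisely this last point: ensuring the limiting solution does not collapse to zero and genuinely vanishes on the Simons cone and nowhere else in $\mathcal O$. The construction of the subsolution $\underline u$ adapted to the geometry of $\mathcal C$, and the verification that $-\Delta \underline u \le f(\underline u)$ using the concavity-type hypothesis that $f'$ is decreasing on $(0,M)$ together with the mean-curvature computation for the level sets near $\mathcal C$, is the technical heart of the argument; everything else (existence of minimizers, truncation to $[0,M]$, odd reflection, elliptic regularity, maximum principle) is routine. An alternative to the subsolution device is a quantitative energy lower bound: show that ${\mathcal E}(u_R,\Omega_R) \ge c\, R^{2m-1}$ grows like the perimeter of $\mathcal C$ inside $B_R$, which forbids $u_R\to 0$; either route works, but both require care with the degenerate weight $(st)^{m-1}$ near the coordinate axes $\{st=0\}$, where the geometry of $\R^{2m}$ is hidden.
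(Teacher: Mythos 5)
Your overall scheme---reducing to the $(s,t)$ quadrant with weight $(st)^{m-1}$, minimizing the energy in the bounded region ${\mathcal O}\cap B_R$ (with zero data on the diagonal), extending by odd reflection across $\{s=t\}$, taking a $C^2_{\rm loc}$ limit as $R\to\infty$, and fighting degeneration to zero---is essentially the route taken in \cite{CT} (the odd-reflection-plus-capacity device is even echoed in the proof of Lemma~\ref{minR} of this paper). Where the proposal goes wrong, however, is precisely at the step you yourself single out as ``the technical heart'': the nondegeneracy subsolution.

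The function $u_0$ evaluated at the signed distance $z=(s-t)/\sqrt{2}$ to the Simons cone is \emph{not} a subsolution in ${\mathcal O}$; it is a strict \emph{super}solution when $m\geq 2$. This is Proposition~\ref{prop} of the paper, and it is immediate from \eqref{eqyz}: with $U=u_0(z)$ one gets
$\Delta U + f(U) = -\,\dfrac{2(m-1)z}{y^2-z^2}\,\dot u_0(z) < 0$ in ${\mathcal O}$ (where $z>0$, $y^2-z^2=2st>0$, $\dot u_0>0$). Geometrically, the level sets of ${\rm dist}(\cdot,\mathcal C)$ inside $\mathcal O$ bend the wrong way for your inequality, so the verification $-\Delta \underline u\le f(\underline u)$ that you propose to carry out will fail. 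Indeed, the paper uses this very fact in the opposite direction, as the pointwise \emph{upper} bound $|u|\le |u_0(z)|$. The subsolution actually employed (Remark~\ref{subsol}, and in \cite{CT}) is a local one, unrelated to the cone geometry: $\varepsilon\phi_1$, the scaled first Dirichlet eigenfunction of a fixed ball $B\subset\subset {\mathcal O}$ chosen so that its first eigenvalue is below $f'(0)$; this is a subsolution for small $\varepsilon$ because $f$ is concave on $(0,M)$, and it sits below every $u_R$ once $R$ is large. Your alternative suggestion---a quantitative energy estimate comparing the minimizer to the zero competitor (whose energy in ${\mathcal O}\cap B_R$ is of order $R^{2m}$ while the minimizer's is $O(R^{2m-1})$)---is sound and is in fact the estimate recorded in \eqref{est-ener}; so the second route you sketch works, the first as written does not.
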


Saddle solutions were first studied by Dang, Fife, and 
Peletier~\cite{DFP}  in dimension $n=2$ for $f$ odd, bistable, 
and with $f(u)/u$ decreasing for $u\in(0,1)$. They proved the
existence and uniqueness of a saddle solution in dimension~$2$. 
They also established monotonicity properties
and the asymptotic behavior at infinity of the saddle solution. 
Its instability (see Definition~\ref{def1.1} below), 
already indicated in a partial
result of~\cite{DFP}, was studied in detail by
Schatzman~\cite{Sc} by analysing the linearized 
operator at the saddle solution and showing that, when
$f(u)=u-u^3$, it has exactly one negative eigenvalue. That is,
the saddle solution of the Allen-Cahn equation in dimension~2 
has Morse index~1; see Definition~\ref{morse} below.

The precise notion of stability or instability that we use is the following.

\begin{definition}\label{def1.1}
{\rm Let $f\in C^1(\R)$.}
{\rm 
We say that a bounded solution $u$ of
\eqref{eq} is {\it stable} if the second variation of energy
$\delta^2{\mathcal E}/\delta^2\xi$ with respect to compactly
supported perturbations $\xi$ is nonnegative. That is, if
\begin{equation}\label{stable}
Q_u(\xi):=\int_{{\R}^n}\left\{
|\nabla\xi|^2-f'(u)\xi^2\right\}dx\geq 0 \quad {\rm for \, all}
\;\xi\in C^\infty_c(\R^n).
\end{equation}
We say that $u$ is {\it unstable} if and only if $u$ is not stable.
}
\end{definition}

Clearly, every global minimizer (as defined in Theorem~\ref{alba}) 
is a stable solution.

The instability of the saddle solution in dimension $2$ 
(in the sense of Definition~\ref{def1.1}) is nowadays a
consequence of a more recent result related to the conjecture of De
Giorgi. Namely, \cite{GG} and \cite{BCNf} 
established that, for all $f\in C^1$, every bounded stable solution 
of \eqref{eq} in $\R^2$  must be a $1$D solution, that is, a solution
depending only on one Euclidean variable. 
In particular, the saddle-shaped solution in $\R^2$ can not be stable.

In \cite{CT} we established the instability outside of every compact
set of saddle solutions in dimension $4$ and, as a consequence, their 
infinite Morse index (see Definition~\ref{morse} below). 
In this paper we  establish this same result
in dimension $6$. In addition, the computations in the last section  suggest the
possibility of saddle solutions being stable in dimensions
$2m\geq 8$. Such stability result would be a promising hint towards
the possible global minimality of saddle solutions in 
high dimensions, and hence towards a construction of a counter-example to 
the conjecture of De Giorgi through the method of Jerison-Monneau~\cite{JM}.

The proof of our result in dimension 6 uses two new 
ingredients of independent interest, which hold in any dimension. 
The first concerns the asymptotic behavior of saddle solutions at infinity. 
The second one establishes the existence of
a minimal and a maximal saddle solutions, as well as some key
monotonicity properties of the maximal saddle solution.

Note that for functions $u$ depending only on $s$ and $t$,
such as saddle solutions, the energy functional \eqref{energia} becomes
\begin{equation}\label{enerst}
{\mathcal E}(u,\Omega)=c_m \int_\Omega s^{m-1}t^{m-1}\left\{ \frac{1}{2}
(u_s^2+u_t^2)+G(u)\right\} ds dt,
\end{equation}
where $c_m$ is a positive constant depending only on $m$
---here we have assumed that $\Omega\subset\R^{2m}$ is radially symmetric
in the first $m$ variables and also in the last $m$ variables,
and we have abused notation by identifying $\Omega$ with its
projection in the $(s,t)$ plane. 
In these variables, the semilinear equation \eqref{eq2m} reads
\begin{equation}\label{eqst}
-(u_{ss}+u_{tt})-(m-1){\Big (}\frac{u_s}{s}+\frac{u_t}{t}{\Big
)}=f(u)\qquad \text{for } s>0,\, t>0. 
\end{equation}

The proof of the instability theorem 
in dimension $4$ relied strongly on the following estimate that we established in~\cite{CT} 
(see also Proposition \ref{prop} in section 2). It states that 
\begin{equation}\label{prop1}
|u(x)|\leq \left|u_0\left(\frac{s-t}{\sqrt{2}}\right)\right|
\qquad\text{for all } x \in\R^{2m}
\end{equation}
and for every saddle solution $u$, where $u_0$ is the monotone solution of
$-u''=f(u)$ in $\R$ vanishing at 0. The quantity $|s-t|/\sqrt{2}$ turns out to be 
the distance to the cone ${\mathcal C}$.
This result suggests a new change of variables. Namely we define
\begin{equation}\label{defyz}
\left\{\begin{array}{rcl}
y & = & (s+t)/\sqrt{2}\\
z & = & (s-t)/\sqrt{2},
\end{array}\right.
\end{equation}
which satisfy $y\geq 0$ and $-y\leq z\leq y$. Note that $|z|$ is the distance of any
point $x\in \R^{2m}$ to the Simons cone ${\mathcal C}$; thus, we have
${\mathcal C}=\{z=0\}$ (see Figure \ref{fig1}).

\begin{figure}[htp]
	\begin{center}
		\psfrag{t}{$t \ge 0$}
		\psfrag{s}{$s \ge 0$}
		\psfrag{dy}{$\partial_y$}
		\psfrag{dz}{$\partial_z$}
		\psfrag{C}{${\mathcal C}=\{s=t\}=\{z=0\}$}
		\psfrag{O}{${\mathcal O}=\{s>t\}=\{z>0\}$}
		\includegraphics[width=4cm]{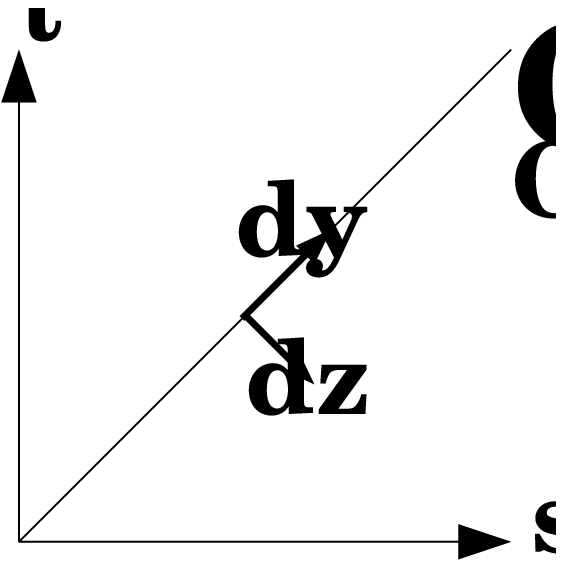}
	\end{center}
	\caption{ }
	\label{fig1}
\end{figure}

With these new variables estimate \eqref{prop1} may be written as 
$$
|u(x)|\leq |u_0(z)|\qquad\text{for all } x \in\R^{2m}.
$$
If we take into account these variables, equation \eqref{eq2m} becomes
\begin{equation}\label{eqyz}
u_{yy}+u_{zz}+\frac{2(m-1)}{y^2-z^2}(yu_y-zu_z)+f(u)=0.
\end{equation}

Using  estimate \eqref{prop1}, instability in dimension
4 follows from establishing that the quadratic form $Q$ defined by
\eqref{stable} with the solution $u$ replaced by 
the explicit function $u_0 ((s-t)/\sqrt 2)$, 
is negative when $n=4$ for some test function $\xi$. By \eqref{prop1}, 
this implies that $Q_u$ is also negative for some 
test function, where $u$ is any given solution vanishing on
${\mathcal C}$. Hence, $u$ is unstable when $n=4$.

This proof can not be generalized to
dimension 6 as it turns out that $u_0((s-t)/\sqrt{2})$ is, in some
sense, asymptotically stable at infinity for 
perturbations with separate variables in $(y,z)$.

Hence, the proof of instability in dimension $6$ requires a more precise argument.
We use the equation satisfied by $\usup_z$, where
$\usup$ is a maximal saddle solution constructed below, as well as 
some monotonicity and
asymptotic properties of  $\usup$ established in this article and
stated in the next two theorems.

The following are
the new results needed in our proof of instability of saddle solutions
in dimension 6. The two results are actually important by
themselves. The first one concerns the asymptotic
behavior at infinity for a class of solutions which contains saddle solutions
---and also other possible solutions not depending only on $s$ and $t$.

\begin{theorem}\label{asym}
Let $f$ satisfy conditions  \eqref{H} and let
$u$ be a bounded solution of $-\Delta u=f(u)$ in $\R^{2m}$ such that $u\equiv 0$
on~${\mathcal C}$, $u>0$ in ${\mathcal O}=\{s>t\}$ and $u$ is odd with
respect to ${\mathcal C}$. Then, denoting 
$$
U(x):=u_0((s-t)/\sqrt{2})=u_0(z) \qquad\text{for }x\in\R^{2m},
$$ 
we have
$$
(u-U)(x)\rightarrow 0 \quad{\rm and } \quad(\nabla u-\nabla
 U)(x)\rightarrow 0,
$$
uniformly as $|x|\rightarrow\infty.$ That is, 
\begin{equation}\label{unif}
\vectornorm{u-U}_{L^{\infty}(\R^{2m}\setminus B_R)}+\vectornorm{\nabla
  u- \nabla U}_{L^{\infty}(\R^{2m}\setminus B_R)}\rightarrow 0\, \text{
  as }\, R\rightarrow\infty.
\end{equation}
\end{theorem}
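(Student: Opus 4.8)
The plan is to argue by contradiction using a translation--compactness scheme, reducing everything to the classification of the possible $C^2_{\mathrm{loc}}$-limits of translates $u(\,\cdot\,+x_k)$ as $|x_k|\to\infty$. Since $u$ is bounded and $f\in C^1$, elliptic estimates give a uniform bound $\|u\|_{C^{2,\alpha}(\R^{2m})}\le C$, so every sequence of translates is precompact in $C^2_{\mathrm{loc}}(\R^{2m})$. Suppose \eqref{unif} fails; then there are $\varepsilon_0>0$ and points $x_k$, $|x_k|\to\infty$, with
$$
|u(x_k)-U(x_k)|+|\nabla u(x_k)-\nabla U(x_k)|\ge\varepsilon_0 .
$$
Passing to a subsequence we are in one of two cases: (A) $\operatorname{dist}(x_k,{\mathcal C})=|z(x_k)|\to\infty$; or (B) $d_k:=\operatorname{dist}(x_k,{\mathcal C})$ is bounded, $d_k\to d_\infty$. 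Throughout I use the pointwise bound \eqref{prop1}, which here reads $0<u\le u_0(z)$ in $\{z>0\}$ and $u_0(z)\le u<0$ in $\{z<0\}$, and the fact that \eqref{H} forces $f'(0)>0$ and $f>0$ on $(0,M)$ (which is what makes the maximum-principle arguments below work).

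The heart of the matter is a Liouville-type lemma: \emph{every bounded solution $v$ of $-\Delta v=f(v)$ in $\R^N$ with $0<v\le M$ satisfies $v\equiv M$} (and, $f$ being odd, $-M\le v<0$ forces $v\equiv-M$). I would prove it in two steps. First, $\inf v>0$: if $v(x_j)\to0$, Harnack applied to $-\Delta v=(f(v)/v)v$ on a fixed large ball $B_R(x_j)$ gives $v\to0$ uniformly there, so for $j$ large one has a positive supersolution of $-\Delta-\tfrac12 f'(0)$ on $B_R(x_j)$, forcing $\lambda_1(B_R)\ge\tfrac12 f'(0)$ --- impossible once $R$ is large, since $\lambda_1(B_R)\to0$. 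Second, if $\beta:=\inf v\in(0,M)$, choose $x_j$ with $v(x_j)\to\beta$ and pass to a limit $\hat v$ of $v(\,\cdot\,+x_j)$; then $\hat v\ge\beta$ attains its minimum $\beta$ at the interior point $0$, so $\Delta\hat v(0)\ge0$, contradicting $-\Delta\hat v(0)=f(\beta)>0$. Hence $\beta=M$.

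In case (A), say $z(x_k)\to+\infty$ (so eventually $x_k\in{\mathcal O}$ and $z(\,\cdot\,+x_k)\to+\infty$ locally uniformly): any limit $u_\infty$ of $u(\,\cdot\,+x_k)$ is a bounded solution with $0\le u_\infty\le M$ by \eqref{prop1}; the strong maximum principle together with the eigenvalue argument above excludes $u_\infty\equiv0$, so $0<u_\infty\le M$ and the lemma gives $u_\infty\equiv M$. Then $u(x_k)\to M=\lim_k U(x_k)$, while $\nabla u(x_k)\to0$ by interior gradient estimates for $M-u$, and $\nabla U(x_k)=u_0'(z(x_k))\nabla z\to0$; this contradicts $\varepsilon_0$. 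In case (B), let $p_k\in{\mathcal C}$ be the point nearest to $x_k$; then $|p_k|\to\infty$, and since ${\mathcal C}\setminus\{0\}$ is a smooth minimal cone its second fundamental form at $p_k$ is $O(1/|p_k|)\to0$, so along a subsequence ${\mathcal C}-p_k\to\Pi$ in $C^2_{\mathrm{loc}}$ for some hyperplane $\Pi$ through $0$, and correspondingly $z(\,\cdot\,+p_k)\to z_\Pi$ in $C^2_{\mathrm{loc}}$, where $z_\Pi$ denotes the signed distance to $\Pi$; hence $U(\,\cdot\,+p_k)\to u_0(z_\Pi)$. A limit $v_\infty$ of $u(\,\cdot\,+p_k)$ then solves the equation in $\R^{2m}$, vanishes on $\Pi$, is one-signed on each side of $\Pi$, and (from \eqref{prop1}) satisfies $v_\infty\le u_0(z_\Pi)$ on $\{z_\Pi>0\}$ and $v_\infty\ge u_0(z_\Pi)$ on $\{z_\Pi<0\}$. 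Using arbitrarily large balls inside the half-space $\{z_\Pi>0\}$, the eigenvalue argument excludes $v_\infty\equiv0$; applying the lemma to translates of $v_\infty$ by points with $z_\Pi\to+\infty$ shows $v_\infty\to\pm M$ uniformly as $z_\Pi\to\pm\infty$; and then a sliding argument in the direction normal to $\Pi$ yields $v_\infty=u_0(z_\Pi)$. Finally $x_k-p_k$ is bounded and, along a subsequence, converges to some $q_\infty$ with $z_\Pi(q_\infty)=d_\infty$, so the $C^1_{\mathrm{loc}}$ convergence gives $u(x_k)\to u_0(d_\infty)=\lim_k U(x_k)$ and $\nabla u(x_k)\to u_0'(d_\infty)\nabla z_\Pi(q_\infty)=\lim_k\nabla U(x_k)$, contradicting $\varepsilon_0$. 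Both cases being impossible, \eqref{unif} holds.

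The step I expect to be the main obstacle is precisely this classification of translation limits: getting the Liouville lemma off the ground --- the positive-supersolution/first-eigenvalue estimate, together with the (elementary but crucial) minimum-principle observation --- and, in case (B), the rigidity that a solution vanishing on a hyperplane, one-signed across it and squeezed below $u_0$ must be the one-dimensional layer. That rigidity rests in turn on two nonroutine points: upgrading to \emph{uniform} convergence to $\pm M$ deep inside each half-space (via the Liouville lemma applied to further translation limits), and then a sliding argument whose delicate part is the region where $v_\infty$ and the sliding layer are simultaneously close to $M$ (where one must use the sign of $f'$ near $\pm M$ and the decay of $u_0'$). The purely geometric input --- that the Simons cone flattens to a hyperplane at infinity --- is elementary, but it is what localizes case (B) to the half-space model.
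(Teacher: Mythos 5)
Your proposal follows the same overall architecture as the paper's proof: argue by contradiction, split according to whether $\operatorname{dist}(x_k,{\mathcal C})$ is unbounded (Case A) or bounded (Case B), pass to $C^2_{\rm loc}$-limits of translates, and classify those limits. The difference is that where the paper \emph{cites} two Liouville-type classification theorems, you attempt to \emph{prove} them. In Case A the paper invokes the result of Aronson--Weinberger and Berestycki--Hamel--Nadirashvili (Proposition~\ref{BHN}, from \cite{AW,BHN}) to conclude the limit is $0$ or $M$, and then excludes the $0$ alternative using the stability of $u$ under perturbations supported in ${\mathcal O}$ (Proposition~\ref{cstable}). Your Liouville lemma for $0<v\le M$ -- Harnack plus the first-eigenvalue comparison to get $\inf v>0$, then the interior-minimum contradiction $-\Delta\hat v(0)=f(\beta)>0$ -- is correct and self-contained, and your exclusion of the trivial limit via the eigenvalue argument on large balls $B_R(x_k)\subset{\mathcal O}$ is a clean alternative to the paper's stability argument. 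That part of the proposal I would call a genuinely different (and arguably more elementary) route to the same conclusion.

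In Case B, however, you have replaced a citation by a sketch, and the sketch has a real gap. The paper reduces to the half-space problem $\Delta w+f(w)=0$ in $\{\tilde x\cdot\nu>0\}$, $w=0$ on the boundary, $w>0$ inside, and then applies Angenent's uniqueness theorem (Proposition~\ref{An}, from~\cite{An}; see also \cite{BCNm}) to conclude $w=u_0(\tilde x\cdot\nu)$. You instead propose to derive this rigidity by (i) establishing uniform convergence of $v_\infty$ to $\pm M$ deep inside each half-space and (ii) a sliding argument in the normal direction. You correctly flag this as the main obstacle, and indeed as written it is not a proof: the sliding step where $v_\infty$ and the translated layer $u_0(z_\Pi-\lambda)$ are both close to $M$ is precisely where one needs Serrin's sweeping principle (or a maximum principle in unbounded strips exploiting $f'<0$ near $M$), and none of that machinery appears in your argument. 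You are in effect re-deriving Angenent's theorem, which is a nontrivial result in its own right. The clean way to close the gap is to do what the paper does: once you have the half-space limit problem (together with $v_\infty\not\equiv 0$, which your eigenvalue argument supplies), cite Proposition~\ref{An} directly. Your geometric observation that the Simons cone flattens at infinity ($|A_{\mathcal C}(p_k)|=O(1/|p_k|)$) is correct and is a cleaner way to set up the half-space limit than the paper's explicit coordinate computation, but the boundary data and sign of the limit on the two sides of $\Pi$ do need the quantitative control the paper works out via Lemma~\ref{lemmadist}; a sentence acknowledging that would tighten the argument.
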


Our proof of Theorem~\ref{asym} uses a  compactness argument based on 
translations of the solution, combined with
two crucial classification or Liouville type results for monostable equations
in all space and in a half-space.

Theorem \ref{asym} will be used to control some of the integrals
appearing in the proof of instability in $\R^6$. In such proof we will 
establish that the maximal saddle solution $\usup$ is
unstable in dimension 6. The existence of such a maximal saddle
solution, its monotonicity properties, as well as the existence of a minimal saddle
solution $\usub$ are the object of our second result.

\begin{theorem}\label{teominmax} For every nonlinearity $f$
satisfying conditions \eqref{H}, there exist two saddle
solutions $\usub$ and $\usup$ of $-\Delta u=f(u)$ in $\R^{2m}$  which
are minimal and maximal, respectively, in ${\mathcal  O}=\{s>t\}$ 
in the following sense. For every solution $u$
of $-\Delta u=f(u)$ in $\R^{2m}$ vanishing
on the Simons cone and such that $u$ has the same sign as $s-t$,
we have
$$
0 < \usub \leq u\leq \usup\quad {\rm in}\;{\mathcal  O}.
$$ 
As a consequence, we also have
$$
|\usub|\leq |u|\leq |\usup|\quad {\rm in}\;\R^{2m}.
$$

In addition, the maximal solution $\usup$ satisfies:
\renewcommand{\labelenumi}{$($\alph{enumi}$)$}
\begin{enumerate}
\item $-\partial_t \usup \geq 0$ in $\R^{2m}$. Furthermore,
$-\partial_t \usup > 0$ in $\R^{2m}\setminus\{t=0\}$ and 
$-\partial_t \usup = 0$ in $\{t=0\}$.
\item $\partial_s \usup \geq 0$ in $\R^{2m}$. Furthermore,
$\partial_s \usup > 0$ in $\R^{2m}\setminus\{s=0\}$ and 
$\partial_s \usup = 0$ in $\{s=0\}$.
\item As a consequence, $\partial_z \usup > 0$ in $\R^{2m}\setminus\{0\}$; 
recall that
$z=(s-t)/\sqrt{2}$.
\item $\partial_y \usup > 0$ in ${\mathcal O}=\{ s>t\}$; recall that
$y=(s+t)/\sqrt{2}$. As a consequence, 
for every direction $\partial_\eta=\alpha \partial_y-\beta \partial_t$ 
with $\alpha$ and $\beta$
nonnegative constants, $\partial_{\eta} \usup > 0$ in $\{ s>t>0\}$. 
\end{enumerate}
\end{theorem}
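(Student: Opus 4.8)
The plan is to construct $\usup$ by a monotone iteration (method of sub- and supersolutions) in the half-space $\{s>t\}$, using a well-chosen supersolution as the starting point of the iteration; the minimal solution $\usub$ is obtained by an analogous scheme started from a subsolution. First I would work in the $(s,t)$ quarter-plane and note, by Definition \ref{def}(c), that any saddle solution is determined by its values in $\mathcal O=\{s>t\}$ together with the odd reflection across $\mathcal C$; so it suffices to build the extremal solutions of the Dirichlet problem for \eqref{eqst} in $\mathcal O$ with zero boundary data on $\{s=t\}$. As the global supersolution I would take $U(x)=u_0(z)=u_0((s-t)/\sqrt2)$: a direct computation from \eqref{eqyz} shows that, since $u_0'>0$ and $z>0$ in $\mathcal O$, the curvature term $\frac{2(m-1)}{y^2-z^2}(yU_y-zU_z)=-\frac{2(m-1)z}{y^2-z^2}\,u_0'(z)\cdot\frac{1}{\sqrt2}\cdot\sqrt2$ has the right sign to make $U$ a supersolution of \eqref{eqyz} in $\mathcal O$; as a subsolution I would use $\varepsilon$ times a suitable compactly-supported-in-$\mathcal O$ eigenfunction-type bump, or more robustly the function $u_0(z)$ cut down near the cone. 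Then I would exhaust $\mathcal O$ by bounded smooth domains $\Omega_k\uparrow\mathcal O$, solve the Dirichlet problems with data $U$ on $\partial\Omega_k\cap\{s>t\}$ and $0$ on the cone, and pass to the limit using interior elliptic estimates (the solutions are uniformly bounded by $M$ via \eqref{prop1} and hypotheses \eqref{H}); monotonicity of the iteration yields that the limit $\usup$ is the largest solution below $U$, and by the comparison built into the construction, $u\le \usup$ in $\mathcal O$ for every solution $u$ with the stated sign. Extending by odd reflection and checking $C^2$ regularity across $\mathcal C$ (using that $f$ is odd, so $U$ and the reflected function solve the same equation and match to second order) produces the saddle solution; the inequality $|\usub|\le|u|\le|\usup|$ in $\R^{2m}$ then follows from the $\mathcal O$-inequalities and oddness. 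Positivity $\usub>0$ in $\mathcal O$ comes from the strong maximum principle and Hopf's lemma, since $\usub\ge$ the subsolution bump $>0$ somewhere and $f(0)=0$.

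For the monotonicity properties of $\usup$ in part (a)–(d), the plan is to exploit a moving-plane / sliding argument combined with the maximality of $\usup$. For (a), I would differentiate \eqref{eqst} in $t$: the function $w:=-\partial_t\usup$ satisfies the linearized equation $-\Delta w = f'(\usup)w + (m-1)\,t^{-2}w$ (the extra positive zeroth-order term coming from differentiating the $u_t/t$ term), which is of the right form to apply the maximum principle. To get the sign $w\ge0$ I would instead argue by the reflection/maximality device: compare $\usup(s,t)$ with $\usup(s,t')$ for $t'<t$ via the maximal characterization, showing $\usup$ is nonincreasing in $t$; once $w\ge0$ is known, the strong maximum principle applied to the linearized equation (whose zeroth order coefficient is bounded away from $\{t=0\}$) upgrades it to $w>0$ off $\{t=0\}$, while $w=0$ on $\{t=0\}$ is forced by the symmetry $\partial_t u=0$ on $\{x_{m+1}=\dots=x_{2m}=0\}$ for any function depending on $t=|(x_{m+1},\dots,x_{2m})|$. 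Part (b) is the mirror argument in the $s$ variable using the oddness $\usup(s,t)=-\usup(t,s)$, which turns maximality in $\mathcal O$ into the corresponding statement for $\partial_s$. Part (c) is then immediate: $\partial_z\usup=\frac1{\sqrt2}(\partial_s\usup-\partial_t\usup)>0$ off the origin since both terms are $\ge0$ and not simultaneously zero away from $0$ (they vanish on $\{s=0\}$ and $\{t=0\}$ respectively, which meet only at the origin). For (d), I would again use a sliding argument in the $y$-direction restricted to $\mathcal O$, or observe $\partial_y\usup=\frac1{\sqrt2}(\partial_s\usup+\partial_t\usup)$ and combine (b) with the fact that $-\partial_t\usup$ controls the sign: more carefully, in $\{s>t>0\}$ one has $\partial_s\usup>0$ strictly, so it suffices to rule out that $\partial_s\usup+\partial_t\usup\le0$ somewhere, which follows from the maximality comparison $\usup(s+\delta,t+\delta)\ge$ the reflected/slid solution. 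The final sentence of (d) is then a trivial linear combination: $\partial_\eta\usup=\alpha\partial_y\usup-\beta\partial_t\usup=\alpha\partial_y\usup+\beta(-\partial_t\usup)>0$ in $\{s>t>0\}$ since both summands are positive there.

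The main obstacle I anticipate is twofold. First, making the construction of $\usup$ genuinely yield the \emph{maximal} solution among \emph{all} solutions with the right sign (not merely among those trapped below our particular supersolution $U$) — this requires knowing a priori that every such solution satisfies $u\le U$ in $\mathcal O$, which is exactly \eqref{prop1}/Proposition \ref{prop} from \cite{CT}, so I would invoke that and then the iteration gives the largest solution $\le U$, hence the largest solution period. Second, and more delicate, is establishing the \emph{strict} monotonicities and the sign of $\partial_y\usup$ in part (d): the curvature coefficient $\frac{2(m-1)}{y^2-z^2}$ is singular on the cone $\{z=\pm y\}$, i.e. on $\{s=0\}\cup\{t=0\}$, so the linearized operator degenerates there and the maximum principle must be applied with care on $\mathcal O$ away from those axes, then the behavior on the axes handled separately by symmetry. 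I expect the sliding argument for $\partial_y$ to be the technically heaviest point, since $y$-translation does not preserve the domain $\mathcal O$ near its vertex, so the comparison has to be localized and the boundary contribution on $\{s=t\}$ (where $\usup=0$) controlled via Hopf's lemma.
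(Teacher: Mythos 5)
Your construction of $\usup$ and $\usub$ is on the right track and essentially matches the paper: monotone iteration starting from the supersolution $U=u_0((s-t)/\sqrt{2})$, exhaustion of $\mathcal{O}$ by bounded domains, passage to the limit via interior estimates, and the use of Proposition~\ref{prop} to make the limiting object genuinely maximal among \emph{all} solutions with the stated sign. (For the minimal solution the paper additionally needs a uniqueness statement for the positive Dirichlet problem in $\mathcal{O}\cap B_R$, proved via the identity involving $f(\overline v)/\overline v-f(v)/v$; your ``analogous scheme from a subsolution'' is compatible with that.)

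However, there is a genuine gap in parts (a)--(d): the moving-plane/sliding device, including the step ``compare $\usup(s,t)$ with $\usup(s,t')$ via the maximal characterization,'' does not work here, and the paper explicitly warns against it. Maximality of $\usup$ holds only over solutions of $-\Delta u=f(u)$ \emph{vanishing on the same Simons cone}; translating in $\R^{2m}$ moves the cone, so the translated function is not in the comparison class. Working instead in $(s,t)$ variables, the slid function $\usup(s,t+\delta)$ solves the equation with coefficient $(m-1)/(t+\delta)$ in place of $(m-1)/t$; since the sign of $u_t$ is precisely what one is trying to prove, the coefficient change has the wrong monotonicity to make the slid function a sub- or supersolution, so no comparison runs. (You also have a sign error: for $w=-\partial_t\usup$ the linearized equation is $-\Delta w=\big(f'(\usup)-\tfrac{m-1}{t^2}\big)w$, not with a $+$ in front of $(m-1)/t^2$; and the maximum principle for that operator, by itself, does not determine the sign of $w$.) The paper's route is genuinely different and is the key idea of the section: the iteration produces $\usup_R$ with $0<\usup_R<M$, which together with the ``$f(\rho)/\rho$ decreasing'' hypothesis makes $\usup_R$ \emph{semi-stable} in $T_R$ (Remark~\ref{stability}); one then observes that $\partial_t\usup_R$ satisfies the linearized equation with the favorable extra term $-\frac{m-1}{t^2}\partial_t\usup_R$, checks $\partial_t\usup_R\le 0$ on $\partial T_R$ so that $(\partial_t\usup_R)^+\in H^1_0(T_R)$, and tests the stability form $Q_{\usup_R}$ with $(\partial_t\usup_R)^+$: the result is simultaneously $\ge 0$ (semi-stability) and $\le 0$ (favorable term), forcing $(\partial_t\usup_R)^+\equiv 0$. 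Applying the same scheme to $(\partial_y\usup_R)^-$, now using the already-established sign of $\partial_t\usup_R$ in the linearized equation for $\partial_y\usup_R$, gives $\partial_y\usup_R\ge 0$, and strict inequalities follow from the strong maximum principle after $R\to\infty$. (The paper also notes the signs can alternatively be propagated through the monotone iteration itself using the maximum principle for $-\Delta-f'(M)+(m-1)/t$.) Without one of these replacements for your sliding step, parts (a), (b), (d) are not established.
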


It is still an open problem to know if $\usub=\usup$ in dimensions
$2m\geq 4$, which would be equivalent to the uniqueness of saddle
solution. This is only known to hold in dimension $2m=2$ by a result of \cite{DFP}.

The cone of directions of monotonicity in ${\mathcal O}$ described in part (d) of 
the theorem is optimal. Indeed, the level sets of a saddle solution 
(see Figure \ref{fig2}) intersect $\{t=0\}$ orthogonally by regularity
of the solution as a function of the radial variables $s$ and $t$ ---i.e.,
fixed $s$, we must have $u_t=0$ at $\{t=0\}$ since $u(s,\cdot)$ is a $C^1$
radial function. On the other hand, by Theorem~\ref{asym} on the asymptotic behavior
of saddle solutions,
the level sets at infinity become
parallel to the Simons cone at a fixed distance (see also Figure \ref{fig2}).
Thus, the cone of monotonicity in the theorem and figure is optimal.

\begin{figure}[htp]
	\begin{center}
		\psfrag{t}{$t \ge 0$}
		\psfrag{s}{$s \ge 0$}
		\psfrag{C}{${\mathcal C}$}
		\psfrag{u}{$\mu$}
		\psfrag{dy}{$\partial_y$}
		\psfrag{dt}{$-\partial_t$}
		\psfrag{l}{$\{u=\lambda=u_0(\mu)\}$}
		\includegraphics[width=5cm]{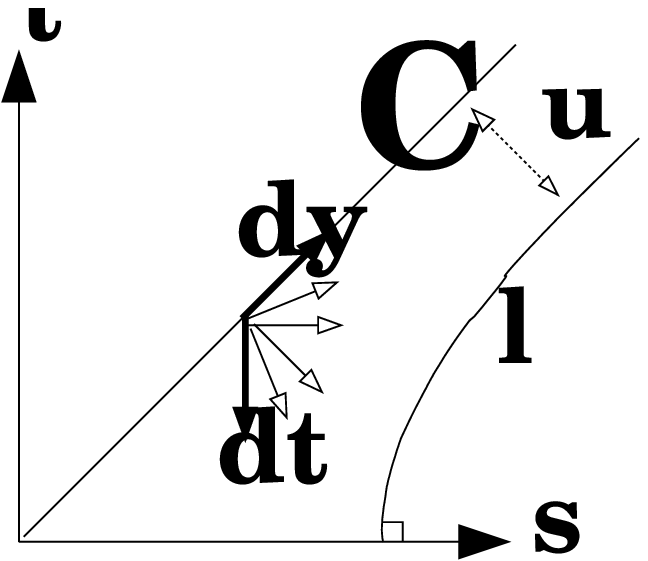}
	\end{center}
	\caption{ }
	\label{fig2}
\end{figure}

\begin{figure}[htp]
	\begin{center}
	        \psfrag{R2}{$\R^2$}
		\psfrag{C}{${\mathcal C}$}
		\psfrag{O}{${\mathcal O}=\{s>t\}=\text{exterior of
		}{\mathcal C}$}
		\includegraphics[width=5cm]{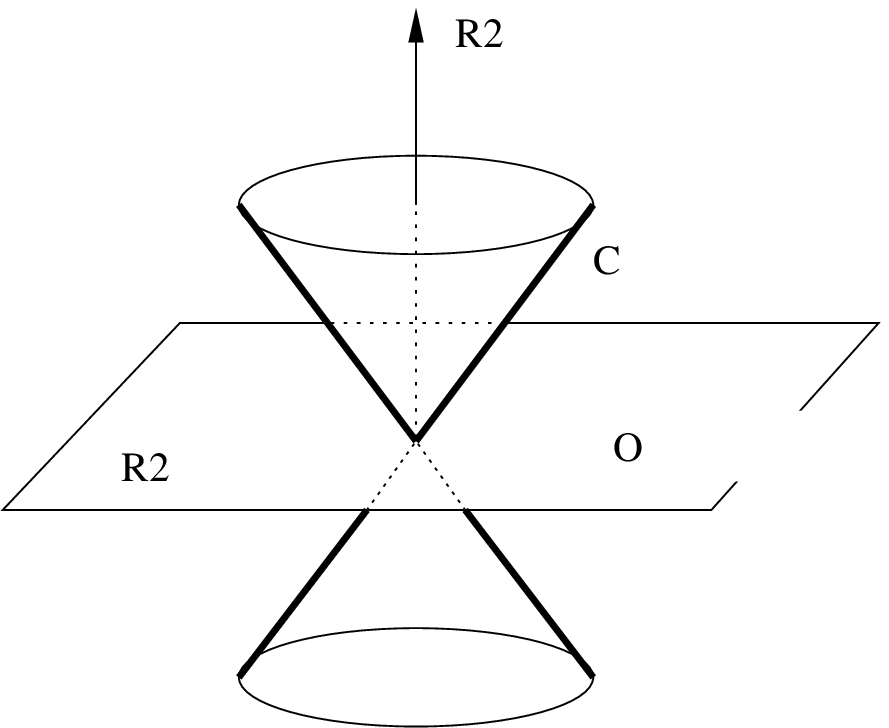}
	\end{center}
	\caption{ }
	\label{fig3}
\end{figure}

We emphasize that our monotonicity results have been achieved without 
using the usual techniques of sliding or moving planes methods. 
The application of such methods to saddle solutions fails in $\R^{2m}$ (for $2m>2$)
due to the particular geometry of saddle solutions  
in dimensions $2m>2$. Indeed, after any translation
in $\R^{2m}$, the exterior ${\mathcal O}$ of the Simons cone does not contain
(neither is contained) in the exterior of the translated cone; see Figure \ref{fig3}.
This fact prevents the use of the sliding or moving planes methods. 
On the other hand, one could think of using these methods in
the simple geometry of $\R^2$ for equation \eqref{eqst} in the $s$ and $t$
variables. But then, for $m>1$, the first order terms in the equation have
the wrong monotonicity to apply any of the two methods.
As we will see, our monotonicity results originates from the stability of saddle
solutions within the region ${\mathcal O}$ ---that is, under perturbations
with compact support in  ${\mathcal O}$.

To state our main theorem in dimension 6, let us recall the notion
of Morse index.

\begin{definition}\label{morse}
{\rm Let $f\in C^1(\R)$.
We say that a bounded solution $u$ of
\eqref{eq}
has {\it finite Morse index} equal to $k\in\{0,1,2,\ldots\}$ if $k$ is the 
maximal dimension of a subspace $X_k$ of $C^1_c (\R^n)$ such that 
$Q_u(\xi) < 0$ for every $\xi\in X_k \setminus \{0\}$. Here
$C^1_c (\R^n)$ is the space of $C^1(\R^n)$ functions with compact support 
and $Q_u$ is defined in \eqref{stable}. 
If there is no such finite integer $k$, we then say that $u$ has
{\it infinite Morse index}.
}
\end{definition}

Every stable solution has finite Morse index equal to $0$. 
It is also easy to verify that every solution with finite Morse
index is stable outside of a compact set (see Theorem~\ref{uns6}
and its proof in section~5 below for more details). 

The following is our main result. It establishes that 
saddle solutions in dimension 6 are unstable outside of every compact set,
and thus have infinite Morse index. It is the analogue of our four dimensional 
result in \cite{CT}. Note that the result applies to a class
of solutions which do not necessarily
depend only on $s$ and $t$ ---and thus 
perhaps larger than the class of saddle solutions.

\begin{theorem}\label{uns6} Let $f$ satisfy conditions  \eqref{H}.
Then, every bounded solution of $-\Delta u=f(u)$ in $\R^6$ such that
$u=0$ on the Simons cone ${\mathcal C}=\{s=t\}$ and $u$ is positive in
${\mathcal O}=\{s>t\}$ is unstable. Furthermore, every such solution
$u$ is unstable outside of every
compact set. That is, for every compact set $K$ of $\R^6$
there exists $\xi\in C^1(\R^6)$ with compact support in
$\R^6\setminus K$ for which 
$Q_u(\xi) < 0$, where $Q_u$ is defined in \eqref{stable}. 
As a consequence, $u$ has infinite Morse index in the sense of
Definition~$\ref{morse}$.

In particular, every saddle
solution as in Definition $\ref{def}$ is unstable outside of
every compact set if $2m=6$.
\end{theorem}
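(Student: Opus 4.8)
The plan is to prove instability of $\usup$ in $\R^6$ and then transfer it to an arbitrary solution $u$ via the comparison $|u|\le|\usup|$ from Theorem~\ref{teominmax}. More precisely, since $f'$ is decreasing on $(0,M)$ and $0<u\le\usup$ in $\mathcal O$ (so $|u|\le|\usup|$ everywhere), for any test function $\xi$ supported where it helps we have $f'(\usup)\le f'(|u|)=f'(u)$ pointwise (using oddness of $f$), hence $Q_u(\xi)\le Q_{\usup}(\xi)$; thus it suffices to exhibit, for every compact $K$, a $\xi\in C^1_c(\R^6\setminus K)$ with $Q_{\usup}(\xi)<0$. I would work in the $(y,z)$ coordinates of \eqref{defyz}, where \eqref{eqyz} holds and where by Theorem~\ref{asym} the solution is $C^1$-close to $U(z)=u_0(z)$ far from the origin.

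The key analytic device is the equation satisfied by the derivative $w:=-\partial_t\usup$, which by Theorem~\ref{teominmax}(a) is positive off $\{t=0\}$; differentiating \eqref{eqst} (or rather the equation in the appropriate variables) in $t$ shows $w$ solves a linearized equation $-\Delta w = f'(\usup)w + (\text{lower order terms from the } (m-1)/t \text{ singularity})$. The standard trick is then to use $\xi=w\varphi$ as a test function, where $\varphi$ is a cutoff, and integrate by parts: one gets
\begin{equation}\label{plan-ibp}
Q_{\usup}(w\varphi)=\int_{\R^6}w^2|\nabla\varphi|^2\,dx-\int_{\R^6}(\text{positive first-order remainder})\,\varphi^2\,dx,
\end{equation}
so that $Q_{\usup}(w\varphi)\le\int w^2|\nabla\varphi|^2$ minus a strictly positive term. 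The point of restricting to dimension $6$ (i.e. $2m=6$, $m=3$) is that one can choose a radial-in-$|x|$ family of cutoffs $\varphi_R$, supported in an annulus $\{R<|x|<R^2\}$ say, for which $\int w^2|\nabla\varphi_R|^2\to 0$ while the negative term stays bounded away from $0$ — this is a logarithmic-type cutoff argument and it is exactly where the dimension count (the weight $s^{m-1}t^{m-1}\sim (\text{dist to }\mathcal C)\cdot|x|^{2m-2}$ against the decay of $w^2$) becomes favorable for $2m\le 6$ but fails for $2m\ge 8$.

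To make \eqref{plan-ibp} quantitative I would use Theorem~\ref{asym}: far from the origin $\usup\approx u_0(z)$ and $-\partial_t\usup\approx -u_0'(z)\partial_t z = u_0'(z)/\sqrt2 > 0$, which is bounded below on any strip $\{|z|\le C\}$ and decays like $u_0'$ (exponentially) as $|z|\to\infty$. Combining the exponential decay of $w$ transverse to $\mathcal C$ with the polynomial weight $s^{m-1}t^{m-1}$ along $\mathcal C$, one estimates $\int_{B_{\rho}} w^2\,dx$ and the annular Dirichlet energy $\int w^2|\nabla\varphi_R|^2$, and checks that for $m=3$ a cutoff that is $1$ on $B_R$, $0$ outside $B_{R^2}$, and logarithmic in between drives the gradient term to $0$; meanwhile the remainder integral in \eqref{plan-ibp}, coming from the $(m-1)$ curvature/singular terms acting on $w^2$, is positive and bounded below uniformly in $R$ once $R$ is large, because $\usup$ is genuinely not one-dimensional (its level sets meet $\{t=0\}$ orthogonally). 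Choosing $R$ large also places $\operatorname{supp}\varphi_R$ outside any prescribed compact $K$, giving instability outside every compact set and hence infinite Morse index.

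The main obstacle I anticipate is controlling the first-order/singular terms: the equation \eqref{eqyz} (equivalently \eqref{eqst}) has coefficients blowing up on $\{s=0\}$ and $\{t=0\}$, and $w=-\partial_t\usup$ vanishes on $\{t=0\}$, so the linearized equation for $w$ is not simply $-\Delta w=f'(\usup)w$ but carries a zeroth-order term of a definite sign — identifying that sign correctly (so that the remainder in \eqref{plan-ibp} is genuinely $\ge0$, not $\le0$) is the delicate computation, and it is the reason the monotonicity and asymptotic inputs of Theorems~\ref{teominmax} and~\ref{asym} are needed rather than a bare integration by parts. A secondary technical point is justifying the integration by parts and the use of the non-compactly-supported-in-$\R^6$ but decaying function $w$ as an ingredient of a legitimate compactly supported test function $\xi=w\varphi_R$; this is handled by the exponential decay of $w$ away from $\mathcal C$ together with elliptic estimates, but it must be done carefully near $\{s=0\}\cap\{t=0\}=\{0\}$ and on the annular support.
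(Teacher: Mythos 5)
Your overall architecture is the same as the paper's: reduce to the maximal solution $\usup$ via $Q_u \le Q_{\usup}$ (using Theorem~\ref{teominmax}, the evenness of $f'$, and its monotonicity on $(0,M)$), then test $Q_{\usup}$ with $\xi = \varphi\cdot(\text{derivative of }\usup)$ and use the linearized equation together with the monotonicity (Theorem~\ref{teominmax}) and asymptotic (Theorem~\ref{asym}) inputs. The paper differentiates in $z$ and tests with $\eta(y/a)\usup_z$; your choice $w=-\partial_t\usup$ and equation \eqref{352} is a legitimate alternative — indeed the zeroth-order coefficient $(m-1)/t^2\approx 2(m-1)/y^2$ near the cone gives the same effective Hardy constant $4$ for $m=3$, and the resulting linearized equation is even slightly cleaner. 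Your concerns about the degeneracies at $\{s=0\}$, $\{t=0\}$, and the origin are the right ones to flag.

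However, the step that is supposed to carry the dimension restriction is wrong as stated. You claim that a logarithmic cutoff $\varphi_R$ on $\{R<|x|<R^2\}$ drives $\int w^2|\nabla\varphi_R|^2\,dx$ to zero while the negative term stays bounded below. Neither is true. Near the cone, $w^2\sim \tfrac12\dot u_0(z)^2$ and $dx\sim c\,y^{2m-2}\,dy\,dz$, so for $m=3$ the log cutoff gives
\[
\int w^2|\nabla\varphi_R|^2\,dx \;\sim\; \frac{C}{\log^2 R}\int_R^{R^2} y^{2}\,dy \;\sim\; \frac{R^6}{\log^2 R}\;\longrightarrow\;\infty,
\]
and the negative term $(m-1)\int \varphi_R^2 w^2/t^2\,dx\sim C R^3$ is dominated, so $Q_{\usup}(\varphi_R w)$ is actually \emph{positive} for this family. (Also, $s^{m-1}t^{m-1}\sim c|x|^{2m-2}$ near $\mathcal C$; there is no extra distance-to-$\mathcal C$ factor in the weight.) The actual mechanism is a Hardy comparison: after rescaling $y=a\rho$ and dividing by $a^{2m-3}$, one arrives at a one-variable form
\[
\Big(\int_\R \dot u_0^2\Big)\int_0^\infty \rho^{2m-2}\Big(\eta_\rho^2-\tfrac{2(m-1)}{\rho^2}\eta^2\Big)\,d\rho,
\]
and the point is that the PDE coefficient $2(m-1)=4$ (for $m=3$) \emph{exceeds} the sharp Hardy constant $(2m-3)^2/4=9/4$ in $\R^{2m-1}=\R^5$. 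Because the constant is exceeded, there exist compactly supported $\eta$ — e.g.\ power-law functions $\rho^{-\alpha}$ with $3/2<\alpha<2$, not logarithmic cutoffs — making this integral strictly negative; scaling $\eta\mapsto\eta(\cdot/a)$ then moves the support out to infinity at no cost, giving instability outside every compact set. It is precisely the inequality $2(m-1)>(2m-3)^2/4$, not vanishing of the gradient integral, that singles out $2m\le 6$ and reverses for $2m\ge 8$. You also still need Theorem~\ref{asym} at this stage to replace $w$ by $\dot u_0(z)/\sqrt2$ uniformly and factor out the $z$-integral; your write-up uses it only for lower bounds on $w$, which is not where it is essential.
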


To establish this result, we prove
that $\usup$ is unstable outside of every compact set by constructing 
test functions
$$
\xi(y,z)=\eta(y)\usup_z(y,z)
$$ 
such that $Q_{\usup}(\xi)<0$.
We need to use the asymptotic convergence and monotonicity results for $\usup$
of Theorems $\ref{asym}$ and $\ref{teominmax}$.
Since $\usup$ is maximal, its instability outside of compact sets 
implies that this same instability 
property holds for all bounded solutions $u$ vanishing on the Simons cone
${\mathcal C}$ and positive in ${\mathcal O}$.

Let us comment on the Morse index 
of stationary surfaces, i.e., surfaces of zero mean curvature. 
The usual proof of 
the instability of the Simons cone in dimension 4 and 6 (see \cite{G}) 
also leads to its instability outside of every compact set, and hence
to its infinite Morse index property. A precise study of the Morse index of 
stationary surfaces close to the Simons cone is made in~\cite{A2}
through the analysis of intersection numbers.
Note that in dimension 2, outside of a ball centered at the origin
the Simons cone consists 
of 4 disconnected half-lines ---a stable configuration.
Note the analogy of these results with those for saddle solutions: 
we have proved that they have infinite Morse index in dimensions
4 and 6, while Schatzman established that in $\R^2$ the saddle solution has Morse
index 1.

The instability
of saddle solutions in low dimensions is related to
perturbations which do not vanish on the
Simons cone, and hence, which change the zero level set of the solution.
Indeed, the following result states that in all dimensions, every
solution that vanishes on the Simons
cone ${\mathcal C}$ and has the same sign as $s-t$ is stable
under perturbations with compact support which vanish on the
Simons cone.

\begin{proposition}\label{cstable}
Let f satisfy conditions \eqref{H}. Then, every bounded  solution $u$ 
of $-\Delta u=f(u)$ in $\R^{2m}$ that vanishes on the Simons cone
${\mathcal C}=\{s=t\}$ and has the same sign as $s-t$,  is stable in $\R^{2m}$ with
respect to perturbations vanishing on ${\mathcal C}$. That is,
$Q_u(\xi) \geq 0$ for all
$\xi\in C^1(\R^{2m})$ with compact support and such that $\xi \equiv 0$ in
${\mathcal C}$, where $Q_u$ is defined in \eqref{stable}.
\end{proposition}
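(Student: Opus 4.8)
The plan is to exploit the classical fact that a solution which is positive in a domain gives rise to a positive eigenfunction-like object, namely $\partial_z \usup$ or, more robustly here, the function $U(x)=u_0(z)$, which is positive in $\mathcal O$ and vanishes on $\mathcal C$. The cleanest route, however, is to use the solution $u$ itself: since $u$ has the same sign as $s-t$, we have $u>0$ in $\mathcal O$ and $u<0$ in $\mathcal O^c$, and $u$ vanishes exactly on $\mathcal C$. Inside $\mathcal O$, the positive function $u$ satisfies $-\Delta u = f(u)$, so $u$ is a positive solution of a linear equation $-\Delta u = c(x) u$ in $\mathcal O$ with $c(x) = f(u)/u$. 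The existence of a positive solution of $-\Delta w = c(x)w$ in a domain is equivalent to the nonnegativity of the associated quadratic form $\int (|\nabla \xi|^2 - c(x)\xi^2)$ over test functions compactly supported \emph{in that domain}. But the quadratic form we need to control has $f'(u)$ in it, not $f(u)/u$. This is where hypothesis \eqref{H} enters: since $f$ is odd and $f'$ is decreasing on $(0,M)$, one has $f'(u) \le f(u)/u$ for $u\in(0,M)$ (the slope of the chord from the origin dominates the derivative when $f'$ is decreasing and $f(0)=0$), and by oddness the same inequality $f'(u)\le f(u)/u$ holds on $(-M,0)$ as well. Hence $-f'(u) \ge -f(u)/u$ pointwise, which makes the quadratic form with $f'(u)$ \emph{larger} than the one with $f(u)/u$.

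Concretely, here is the order of steps. First, record the pointwise inequality $f'(u(x)) \le f(u(x))/u(x)$ for every $x$ with $u(x)\ne 0$, using oddness of $f$ and monotonicity of $f'$ on $(0,M)$ from \eqref{H}; note $|u|<M$ since $u$ is bounded and, by the maximum principle comparison with $u_0$ via \eqref{prop1}, in fact $|u|\le M$ with the open inequality away from where it is forced. Second, fix $\xi\in C^1(\R^{2m})$ with compact support and $\xi\equiv 0$ on $\mathcal C$; write $\xi = \xi^+ - \xi^-$, so that $\xi^+$ is supported in $\overline{\mathcal O}$ and $\xi^-$ in $\overline{\mathcal O^c}$, both vanishing on $\mathcal C$. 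Third, use the standard Picone/ground-state-substitution argument on the open set $\mathcal O$: for the positive solution $u$ of $-\Delta u = c(x)u$ there, one has $\int_{\mathcal O}(|\nabla \psi|^2 - c(x)\psi^2)\,dx \ge 0$ for all Lipschitz $\psi$ with compact support in $\R^{2m}$ vanishing on $\partial\mathcal O=\mathcal C$ — apply this with $\psi=\xi^+$. By the sign symmetry of the problem (replace $u$ by $-u$, which is positive in $\mathcal O^c$), the same holds on $\mathcal O^c$ with $\psi=\xi^-$. Fourth, add the two inequalities and use that the cross terms vanish (the supports of $\nabla\xi^+$ and $\nabla\xi^-$ overlap only on $\mathcal C$, a null set) together with $-f'(u)\ge -c(x)$ to conclude
$$
Q_u(\xi) = \int_{\R^{2m}}\bigl(|\nabla\xi|^2 - f'(u)\xi^2\bigr)\,dx \;\ge\; \int_{\R^{2m}}\bigl(|\nabla\xi|^2 - c(x)\xi^2\bigr)\,dx \;\ge\; 0.
$$

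The main obstacle I anticipate is justifying the ground-state substitution inequality on $\mathcal O$ rigorously when the test function only vanishes \emph{on} $\mathcal C$ rather than having compact support strictly inside $\mathcal O$: one must make sure the integration-by-parts in Picone's identity, $\int_{\mathcal O} |\nabla\psi|^2 - \int_{\mathcal O} \frac{\psi^2}{u}(-\Delta u) = \int_{\mathcal O}\bigl|\nabla\psi - \tfrac{\psi}{u}\nabla u\bigr|^2 \ge 0$, produces no boundary contribution from $\mathcal C$. Since $u$ vanishes on $\mathcal C$ and $\psi/u$ could blow up there, one should approximate: work on $\{u>\varepsilon\}\cap\mathcal O$ with cutoff $\psi_\varepsilon$, or equivalently replace $u$ by $u+\varepsilon$ in the denominator, prove the inequality there, and let $\varepsilon\to 0$ using that $\psi$ vanishes on $\mathcal C$ together with the regularity of $u$ and $\xi$ so that $\psi^2/(u+\varepsilon)$ and its gradient stay controlled. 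A convenient alternative that sidesteps the blow-up entirely is to run Picone with $U(x)=u_0(z)>0$ in $\mathcal O$ in place of $u$, since $U$ is an explicit comparison function; but then one needs $-\Delta U \le f(U)$ in $\mathcal O$ type information and the inequality $f(U)/U \ge f'(u)$, which again follows from \eqref{H} combined with $|u|\le U$. Either way, the analytic heart is the approximation argument near the cone, and everything else is the elementary convexity inequality $f'\le f(u)/u$ coming from the concavity-type hypothesis on $f$ in \eqref{H}.
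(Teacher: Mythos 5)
Your overall strategy is the same as the paper's: use the convexity-type inequality $f'(\rho)\le f(\rho)/\rho$ on $(0,M)$ (coming from $f'$ decreasing and $f(0)=0$, extended by oddness), view the positive solution $u$ in $\mathcal O$ as a positive supersolution of the linearized operator, obtain nonnegativity of $Q_u$ for perturbations supported in $\overline{\mathcal O}$ via the ground-state/Picone substitution, do the same in $\{s<t\}$ by symmetry, and then split a general $\xi$ vanishing on $\mathcal C$ into its two pieces. The paper packages the first part in Remark~\ref{stability} (positive solution with $0<u<M$ implies semi-stability, via multiplying $-\Delta u\ge f'(u)u$ by $\xi^2/u$ and Cauchy--Schwarz, or by the principal-eigenvalue argument of \cite{BNV}), and closes the same way: $\xi=\chi_{\{s>t\}}\xi+\chi_{\{s<t\}}\xi$, sum of two $H^1$ functions. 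Your explicit worry about the degeneration of $\xi^2/u$ near $\mathcal C$ and the $u+\varepsilon$ regularization is a good instinct; the paper glosses this as ``by approximation'' in Remark~\ref{stability}.

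The one thing that needs correcting is your decomposition. You write $\xi=\xi^+-\xi^-$ and assert ``so that $\xi^+$ is supported in $\overline{\mathcal O}$ and $\xi^-$ in $\overline{\mathcal O^c}$.'' If $\xi^\pm$ denote the positive and negative parts of $\xi$, this is false: an arbitrary test function $\xi$ vanishing on $\mathcal C$ can be positive somewhere in $\mathcal O^c$ and negative somewhere in $\mathcal O$; the sign constraint is on $u$, not on $\xi$. The correct split is $\xi=\chi_{\mathcal O}\xi+\chi_{\mathcal O^c}\xi$ (by region, not by sign), and the point is that $\xi\in C^1$ together with $\xi\equiv 0$ on $\mathcal C=\partial\mathcal O$ guarantees that each piece lies in $H^1_0$ of its respective region, so the semi-stability there applies and the cross terms in $Q_u$ vanish. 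With that substitution your argument coincides with the paper's proof.
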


\begin{remark}
{\rm
The last section shows that the maximal solution $\usup$ is in some sense
asymptotically stable at infinity in dimensions $2m\geq 8$. This indicates
that saddle-shaped solutions might be stable in dimensions $2m\geq 8$.
}
\end{remark}

The recent work by Alessio, Calamai, and Montecchiari~\cite{ACM} concerns scalar saddle type solutions in
$\R^2$ changing sign on more nodal lines than $x_1=\pm x_2$, whereas Alama, 
Bronsard, and Gui in \cite{ABG} studied vector-valued saddle solutions in $\R^2$.

The paper is organized as follows. In section $2$ we present some of the 
results mentioned in this introductions and used later in the paper.
Section 3 concerns the maximal and minimal saddle solutions and their properties;  
we prove Theorem \ref{teominmax} and Proposition
\ref{cstable}. Section 4 is devoted to the study of the asymptotic
behavior of saddle shaped solutions; we establish Theorem~\ref{asym}.
Section 5 contains the proof of instability of saddle solutions
in $\R^6$, Theorem~\ref{uns6}.
Finally, in section 6 we show that the maximal saddle solution $\usup$ is
in some sense asymptotically stable at infinity in dimensions $2m\geq 8$.

\section{Preliminaries}

This section contains a more detailed description and statement of 
some of the results mentioned in the previous section. Most of these results 
will be used throughout the paper.

We begin with the existence of a solution in dimension one.
The proof of this lemma, which follows from
integrating the ODE $\ddot{u}-G'(u)=0$, can be found in \cite{AC}
---see also a sketch of the proof below, after the statement.

\begin{lemma}[see e.g. \cite{AC}] \label{lemma1D} 
Let $G\in C^2(\R)$.
There exists a bounded function $u_0\in C^2(\R)$ satisfying
$$\ddot{u}_0-G'(u_0)=0 \quad \hbox{and} \quad \dot{u}_0>0 
\quad \hbox{ in } \R
$$
if and only if there exist two real numbers $m_1<m_2$ for which
$G$ satisfies
\begin{equation}\label{G1}
G'(m_1)=G'(m_2)=0 \qquad\text{ and }
\end{equation}
\begin{equation}\label{G2}
G>G(m_1)=G(m_2) \quad \hbox{in } (m_1,m_2).
\end{equation}
In such case we have $m_1=\lim_{\tau\rightarrow -\infty} u_0(\tau)$ and 
$m_2=\lim_{\tau\rightarrow +\infty} u_0(\tau)$. Moreover, the solution
$u_0=u_0(\tau)$ is unique up to translations of the independent 
variable $\tau$. 

Adding a
constant to $G$, assume that
\begin{equation}\label{Gzero}
G(m_1)=G(m_2)=0.
\end{equation}
Then, we have that
\begin{equation}\label{hamilt}
\frac{\dot{u}_0^2}{2}= G(u_0) \qquad\text{in }\R .
\end{equation}
If in addition 
$G''(m_1)\neq 0$ and $G''(m_2)\neq 0$, then
\begin{equation}\label{cresc1D}
0<\dot{u}_0(\tau)\leq C e^{-c|\tau|} \quad \text{in }\R
\end{equation}
for some positive constants $C$ and $c$, and
\begin{equation}\label{int1D}
\int_{-\infty}^{+\infty} \left\{ \frac{1}{2}\dot{u}_0(\tau)^2 +
G(u_0(\tau)) \right\} d\tau <+\infty .
\end{equation} 
\end{lemma}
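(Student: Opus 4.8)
\textbf{Proof proposal for Lemma~\ref{lemma1D}.}

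The plan is to treat the existence of $u_0$ as an ODE phase-plane problem, exploiting that the equation $\ddot u_0 = G'(u_0)$ is conservative. First I would establish the energy (Hamiltonian) identity: multiplying the equation by $\dot u_0$ and integrating gives $\tfrac12 \dot u_0^2 - G(u_0) = E$ for some constant $E$. If $u_0$ is bounded and strictly increasing, then $u_0$ has finite limits $\ell_-<\ell_+$ as $\tau\to\mp\infty$; along a subsequence $\ddot u_0\to 0$ and $\dot u_0\to 0$, so $E=-G(\ell_\pm)$, forcing $G(\ell_-)=G(\ell_+)$ and, since $\dot u_0^2 = 2(G(u_0)-G(\ell_\pm))>0$ on the open interval, also $G>G(\ell_\pm)$ there. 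Moreover $\ell_\pm$ must be critical points of $G$ (otherwise the limiting behavior as $\tau\to\pm\infty$ would be incompatible with $\dot u_0\to0$ and the equation). Setting $m_1=\ell_-$, $m_2=\ell_+$ yields \eqref{G1}--\eqref{G2} and the identification of the limits; identity \eqref{hamilt} is the energy relation after normalizing \eqref{Gzero}.

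Conversely, assuming \eqref{G1}--\eqref{G2}, I would construct $u_0$ by separation of variables. Normalizing $G(m_1)=G(m_2)=0$, define implicitly $\tau(u) = \int_{0}^{u} \frac{dv}{\sqrt{2G(v)}}$ for $u\in(m_1,m_2)$, where I may assume (translating) that $0\in(m_1,m_2)$. Since $G>0$ on $(m_1,m_2)$ the integrand is positive and locally integrable in the interior; the only issue is integrability near the endpoints $m_1,m_2$. One checks the integral \emph{diverges} at each endpoint — this is exactly what makes $u_0(\tau)\to m_i$ only as $\tau\to\pm\infty$ rather than in finite ``time'' — using that $G$ has a zero there; near $m_2$, since $G(m_2)=G'(m_2)=0$ and $G\in C^2$, one has $G(v)\le C(m_2-v)^2$ when $G''(m_2)\ne0$ (giving logarithmic divergence), and in general $G(v)=o((m_2-v))$ suffices for divergence of $\int dv/\sqrt{2G(v)}$ after a short comparison argument. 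Hence $\tau:(m_1,m_2)\to\R$ is an increasing bijection onto $\R$; its inverse $u_0:\R\to(m_1,m_2)$ is $C^2$, solves $\dot u_0 = \sqrt{2G(u_0)}>0$, hence (differentiating) $\ddot u_0=G'(u_0)$, and has the stated limits. Uniqueness up to translation is immediate: any two solutions satisfy the same first-order autonomous ODE $\dot u_0=\sqrt{2G(u_0)}$ near a point where they agree in value, and an ODE uniqueness argument (after checking the right-hand side is locally Lipschitz in the interior, which holds since $G>0$ there) propagates the agreement.

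For the final refinements, assume additionally $G''(m_1)\ne0$, $G''(m_2)\ne0$; since $G\ge 0$ with $G(m_i)=G'(m_i)=0$, necessarily $G''(m_i)>0$, so $G(v)\ge c_0\,\mathrm{dist}(v,\{m_1,m_2\})^2$ near the endpoints. Combined with \eqref{hamilt}, near $\tau\to+\infty$ we get $\dot u_0 = \sqrt{2G(u_0)} \ge \sqrt{2c_0}\,(m_2-u_0)$, i.e. $\frac{d}{d\tau}(m_2-u_0) \le -\sqrt{2c_0}\,(m_2-u_0)$, whence $m_2-u_0(\tau)\le C e^{-c\tau}$ by Gronwall; differentiating $\dot u_0=\sqrt{2G(u_0)}$ and using $G'(u_0)=G''(m_2)(u_0-m_2)+o(u_0-m_2)$ transfers this decay to $\dot u_0$, giving \eqref{cresc1D}; the symmetric estimate at $-\infty$ is identical. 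Finally \eqref{hamilt} turns the integrand in \eqref{int1D} into $\dot u_0^2 = 2G(u_0)$, which by \eqref{cresc1D} decays like $e^{-2c|\tau|}$, so the integral converges.

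I expect the only genuine subtlety to be the endpoint analysis in the converse direction — verifying that $\int dv/\sqrt{2G(v)}$ diverges at $m_1$ and $m_2$ under the bare hypotheses \eqref{G1}--\eqref{G2} (where $G''$ at the endpoints is not assumed to be nonzero, nor even that the endpoints are isolated zeros of $G-G(m_i)$ approached nondegenerately), so that $u_0$ genuinely exists on all of $\R$ with the correct limits; everything else is routine phase-plane and Gronwall bookkeeping.
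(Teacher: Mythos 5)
Your route coincides with the paper's sketch: the Hamiltonian identity obtained by multiplying through by $\dot u_0$, construction by separation of variables with $u_0=\phi^{-1}$ for $\phi(\sigma)=\int_{m_0}^\sigma dw/\sqrt{2G(w)}$, and the quadratic behavior of $G$ near each $m_i$ when $G''(m_i)\neq0$ to read off the exponential decay \eqref{cresc1D} and finiteness \eqref{int1D}. The necessity direction, the Gronwall step, and uniqueness via the first-order autonomous ODE on the interior of $(m_1,m_2)$ are all sound.

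However, in the step you yourself flag as the delicate one --- divergence of $\phi$ at $m_1,m_2$ in the converse direction, without assuming $G''(m_i)\neq0$ --- your fallback argument is wrong. It is not true that $G(v)=o(m_2-v)$ forces $\int^{m_2} dv/\sqrt{2G(v)}$ to diverge: $G(v)=(m_2-v)^{3/2}$ is $o(m_2-v)$, yet $\int^{m_2}(m_2-v)^{-3/4}\,dv$ converges. Moreover, you need not reserve the bound $G(v)\le C(m_2-v)^2$ for the case $G''(m_2)\neq0$: since $G\in C^2$ and $G(m_2)=G'(m_2)=0$, Taylor's theorem with Lagrange remainder gives, for $v$ near $m_2$,
\begin{equation*}
G(v)=\tfrac12 G''(\xi)(v-m_2)^2\le \tfrac12\Big(\sup_{[m_2-\delta,\,m_2]}|G''|\Big)(m_2-v)^2,
\end{equation*}
with no hypothesis on $G''(m_2)$ whatsoever. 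This unconditional quadratic \emph{upper} bound is exactly what makes $\phi$ blow up (at least logarithmically) at each endpoint, so $u_0$ is defined on all of $\R$; the hypothesis $G''(m_i)\neq0$ is needed only for the matching quadratic \emph{lower} bound $G\ge c_0(m_2-v)^2$, which is what your Gronwall argument for \eqref{cresc1D} actually uses. With that correction, your argument is complete and matches the paper's.
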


Given G satisfying ($\ref{G1}$), ($\ref{G2}$), and \eqref{Gzero},
to construct $u_0$ 
we simply choose any $m_0\in(m_1,m_2)$ and define
$$\phi(\sigma)=\int_{m_0}^{\sigma}\frac{dw}{\sqrt{2(G(w))}} \quad\text{ for }
\sigma\in (m_1,m_2).$$
Then let $u_0:=\phi^{-1}$ be the inverse function of $\phi$.
This formula is found multiplying $\ddot{u}-G'(u)=0$ by $\dot{u}$
and integrating the equation ---which also gives the necessity
of conditions ($\ref{G1}$) and ($\ref{G2}$) for existence.
The above definition of $u_0$ leads automatically to \eqref{hamilt}.
Under the additional hypothesis
$G''(m_i)\neq 0$, $G$ behaves like a quadratic function
near each $m_i$. Using the expression above for $\phi$, this gives
that $\phi$ blows-up logarithmically at $m_i$, and thus 
its inverse function $u_0$
attains its limits $m_i$ at $\pm\infty$ exponentially. 
From this and identity \eqref{hamilt}, the exponential decay
\eqref{cresc1D} for $\dot{u}_0$, as well as \eqref{int1D}, follow.

In relation with assumptions \eqref{G1} and \eqref{G2} on $G$, our hypothesis
\begin{equation}\label{condG}
G\geq 0=G(\pm M) \; {\rm in} \,\R \quad{\rm and}\quad G>0 \;{\rm in}\,
(-M,M)
\end{equation}
appearing in \eqref{H}, guarantees  the existence of an
increasing solution $u_0$ of \eqref{eq} in dimension 1, that is 
in all of $\R$, taking values onto
$(-M,M)$, as stated in Lemma \ref{lemma1D}.
The nonlinearities $f$ that satisfy property \eqref{condG} 
are of ``balanced bistable type", while the potentials $G$
are of ``double-well type". 
Typical examples are the Allen-Cahn (or scalar Ginzburg-Landau)
equation $-\Delta u= u-u^3$, where $G(u)=(1/4)(1-u^2)^2$ and
$M=1$, and also the equation $-\Delta u=\sin (\pi u)$, for which $M=1$ and
$G(u)=(1/\pi)(1+\cos (\pi u))$.

Since the increasing solution whose existence is established above 
is unique up to translations of the
independent variable, we normalize it to vanish at the origin and
we call it~$u_0$. Thus, we have
\begin{equation}\label{u0}
\left\{
\begin{array}{l}
u_0:\R\rightarrow (-M,M) \\
u_0(0)=0,\, \dot{u}_0>0 \ {\rm in}\, \R, \;{\rm and}\\
 -\ddot{u}_0=f(u_0) \quad {\rm in}\, \R.
 \end{array}
 \right.
\end{equation}
For the Allen-Cahn nonlinearity $f(u)=u-u^3$, 
the solution $u_0$ can be computed explicitly and it is given by 
$u_0(\tau)=\tanh(\tau/\sqrt{2})$.

We can now consider the family of
$1$D solutions to \eqref{eq} in $\R^n$, given by
\begin{equation}\label{1d}
u_{b,c}(x)=u_0(b\cdot x+c)\quad \textrm{ for }  x\in\R^n,
\end{equation}
for every given $b\in\R^n$ with $|b|=1$ and $c\in\R$.
Under hypothesis ($\ref{condG}$) on the nonlinearity, 
every $1$D solution $u_{b,c}$ is a
global minimizer of \eqref{eq}, by Theorem~\ref{alba} in the introduction when
$f(u)=u-u^3$ (or by the more general result in~\cite{AAC}). 
In particular, $u_{b,c}$ is a stable
solution. 
By Savin's result, Theorem~\ref{teosavin} above, we know
that $1$D solutions are the only global minimizers of 
the Allen-Cahn equation \eqref{GL} if $n\leq 7$.

In $\R^8$ it is expected that global minimizers
which are not $1$D exist. As argued in the introduction, natural 
candidates to be minimizers of this
type are saddle-shaped solutions.

In \cite{CT} we established the existence of saddle solutions to \eqref{eq} 
(see Theorem \ref{exis} in the Introduction).
By classical elliptic regularity theory, it is well known that
for $f\in C^1(\R)$, every bounded solution of $-\Delta u=f(u)$ in $\R^{n}$
satisfies $u\in C^{2,\alpha} (\R^n)$ for all $0<\alpha <1$, and
thus it is a classical solution. In particular, saddle solutions
are classical solutions.

Moreover (see \cite{CT}), there exists a saddle solution 
$u$ satisfying $|u|<M$ in $\R^{2m}$, as well as the energy estimate
\begin{equation}\label{est-ener}
{\mathcal E}(u,B_R)=\int_{B_R} \left\{ \frac{1}{2}|\nabla
u|^2+G(u)\right\} dx \leq CR^{2m-1} \qquad\text{for all }R>1,
\end{equation}
where $C$ is a constant independent of $R$ and $B_R$ denotes the
open ball of radius~$R$ centered at $0$. Both these estimates as well as existence itself are establish assuming only the first two conditions on $f$ 
in \eqref{H}.

A crucial ingredient that we used in the proof of the instability theorem
in dimension $4$ and which we will use also in this paper 
is the following pointwise estimate.

\begin{proposition}[\textbf{\cite{CT}}]\label{prop} 
Let $f\in C^1(\R)$ satisfy \eqref{H}. 
If $u$ is a bounded solution of $-\Delta u=f(u)$ in $\R^{2m}$ that
vanishes on the Simons cone ${\mathcal C}=\{s=t\}$, then
\begin{equation}\label{estimate}
|u(x)|\leq |u_0({\rm dist}(x,{\mathcal C}))|=\left| u_0{\Big
(}\frac{s-t}{\sqrt{2}}{\Big )}\right| \quad \text{ for all }
x\in\R^{2m},
\end{equation}
where $u_0$ is defined by \eqref{u0} and 
${\rm dist}(\cdot,{\mathcal C})$ denotes the distance 
to the Simons cone, which is equal to $|s-t|/\sqrt{2}$.

In addition, the function $u_0((s-t)/\sqrt{2})$ is a 
supersolution of $-\Delta u=f(u)$ in the set
${\mathcal O}=\{s>t\}$.

\end{proposition}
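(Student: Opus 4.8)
\emph{Overview and Step 1 (the supersolution property).} The plan is to first establish the last assertion and then deduce \eqref{estimate} from it by a comparison argument together with the reflection symmetry of the Simons cone. For Step~1, write $U(x)=u_0(z)$ with $z=(s-t)/\sqrt2=\operatorname{dist}(x,\mathcal C)\cdot\operatorname{sgn}(s-t)$. Since $s=|x^1|$ and $t=|x^2|$ are Euclidean norms on the two $\R^m$ blocks, one has $\nabla s\perp\nabla t$, $|\nabla s|=|\nabla t|=1$, hence $|\nabla z|\equiv1$, and $\Delta s=(m-1)/s$, $\Delta t=(m-1)/t$, so
\[
\Delta z=\frac{m-1}{\sqrt2}\Big(\frac1s-\frac1t\Big)=\frac{(m-1)(t-s)}{\sqrt2\,st}\le 0\quad\text{in }\mathcal O=\{s>t\}.
\]
(This is where the vanishing mean curvature of $\mathcal C$ enters: moving into $\mathcal O$ the level sets $\{z=\mathrm{const}\}$ bend the right way.) Using $-\ddot u_0=f(u_0)$ from \eqref{u0} and $\dot u_0>0$,
\[
-\Delta U-f(U)=-\ddot u_0(z)|\nabla z|^2-\dot u_0(z)\Delta z-f(u_0(z))=-\dot u_0(z)\,\Delta z\ge 0\quad\text{in }\mathcal O,
\]
which in the variables \eqref{eqyz} is $-\Delta U-f(U)=\tfrac{2(m-1)z}{y^2-z^2}\dot u_0(z)\ge0$; the set $\{t=0\}\cap\mathcal O$, where the radial formula is singular, is handled by checking directly in the $x$-variables that $U$ is superharmonic across it (or by working in $\{s>t>0\}$ and taking limits). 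The same computation shows that each translate $U_c:=u_0(z+c)$, $c\in\R$, is a supersolution of $-\Delta u=f(u)$ in $\mathcal O$.

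\emph{Step 2 (comparison: $\tilde u\le U$ in $\mathcal O$ for every bounded solution $\tilde u$ vanishing on $\mathcal C$).} First one reduces to $\|\tilde u\|_{L^\infty}\le M$ by a standard maximum-principle/translation argument (using that $\pm M$ are stable zeros of $f$, i.e. $f'(\pm M)<0$, which follows from \eqref{H}); then either $\tilde u\equiv M$, excluded since $\tilde u=0$ on $\mathcal C$, or $\tilde u<M$ everywhere by the strong maximum principle. Far from the cone, in $\mathcal O\cap\{z\ge R_0\}$ with $R_0$ large, $U_0=u_0(z)$ is close to $M$ and the comparison $\tilde u\le U_0$ is governed by the linearized operator $-\Delta-f'(\cdot)$, which is coercive there because $f'(M)<0$; applying the maximum principle to $M-\tilde u$ versus $M-U_0$ gives $\tilde u\le U_0$ in this region. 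In the remaining region $\mathcal O\cap\{z\le R_0\}$ I slide the supersolutions $U_c$: let $c^*=\inf\{c\ge 0:\ U_c\ge\tilde u\ \text{in}\ \mathcal O\cap\{z\le R_0\}\}$; if $c^*>0$, a first contact at a finite interior point is impossible by the strong maximum principle (the contact component cannot reach $\mathcal C$, where $U_{c^*}=u_0(c^*)>0=\tilde u$, nor $\{z=R_0\}$, where we have strict inequality), while a contact "at infinity along the cone" is excluded by translating along the contact sequence and invoking elliptic estimates: the limit of $\tilde u$ solves $-\Delta v=f(v)$ and vanishes on a hyperplane, the limit of $U_{c^*}$ is a one-dimensional profile positive on that same hyperplane, and the strong maximum principle (or the Hopf lemma) forces them to coincide — a contradiction. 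Hence $c^*=0$, and letting $c\downarrow0$ gives $\tilde u\le U$ in $\mathcal O$.

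\emph{Step 3 (the two-sided bound).} Let $\sigma$ interchange the first $m$ and the last $m$ coordinates, so $\sigma(\mathcal C)=\mathcal C$ and $z\circ\sigma=-z$; and note that $u_0$ is odd, since $-u_0(-\cdot)$ solves the same ODE and vanishes at $0$, so uniqueness up to translation in Lemma~\ref{lemma1D} applies. Because $f$ is odd, each of $\pm u$ and $\pm(u\circ\sigma)$ is again a bounded solution of $-\Delta v=f(v)$ vanishing on $\mathcal C$. Applying Step~2 to $\pm u$ gives $|u|\le u_0(z)=|u_0(z)|$ on $\{z>0\}$; applying it to $\pm(u\circ\sigma)$ and using $z\circ\sigma=-z$ and the oddness of $u_0$ gives $|u|\le u_0(-z)=|u_0(z)|$ on $\{z<0\}$; and on $\mathcal C=\{z=0\}$ both sides vanish. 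This is exactly \eqref{estimate}.

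\emph{Expected main obstacle.} Step~1 is a routine computation and Step~3 is bookkeeping; the real difficulty is Step~2, namely carrying out the comparison on the \emph{unbounded} set $\mathcal O$ with a nonlinearity $f$ that is not monotone. The way I expect to make it work is the interplay just described — coercivity of the linearized operator near $\pm M$ to control the region far from $\mathcal C$, sliding of the explicit supersolutions $U_c$ in the bounded-$z$ region, the strong maximum principle at finite contact points, and a translation/compactness argument that identifies every possible limit configuration at infinity (one-dimensional profiles on half-spaces, and the constants $\pm M$) — and the bulk of the technical effort will go into making these pieces fit together rigorously.
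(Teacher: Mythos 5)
Your Step~1 (the supersolution computation) is correct and coincides with the paper's argument for that part, and Step~3 is straightforward bookkeeping. The difficulty is Step~2, and I believe the argument there has a circularity that is not fixable as sketched. The far-field comparison in $\mathcal O\cap\{z\ge R_0\}$ invokes coercivity of the linearized operator $-\Delta-f'(\cdot)$, but that operator is linearized at values intermediate between $\tilde u$ and $U_0$; for those to lie where $f'<0$ you must already know that $\tilde u$ is uniformly close to $M$ in $\{z\ge R_0\}$. That decay of $\tilde u$ toward $M$ away from $\mathcal C$ is not available a priori --- it is essentially the content of Theorem~\ref{asym}, whose proof relies on the present proposition. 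Moreover, the maximum principle in the unbounded set $\{z\ge R_0\}\cap\mathcal O$ needs control of $\tilde u-U_0$ on the inner boundary $\{z=R_0\}$, which you propose to get from the near-field sliding; the sliding in $\{z\le R_0\}\cap\mathcal O$ in turn needs the far-field inequality to close off the boundary $\{z=R_0\}$, and additionally needs $\sup_{\mathcal O\cap\{z\le R_0\}}\tilde u<M$ just to start (a point you do not address, though a translation/Liouville argument can supply it). Each half of Step~2 depends on the other, and I do not see how to break the loop within the comparison framework alone.

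The paper avoids all of this: estimate \eqref{estimate} is deduced in a couple of lines from Modica's gradient bound \cite{M1}, which says that any bounded entire solution satisfies $\tfrac12|\nabla u|^2\le G(u)$ pointwise. Setting $\Psi(\sigma)=\int_0^\sigma dw/\sqrt{2G(w)}$ (so that $u_0=\Psi^{-1}$ by \eqref{hamilt}), Modica's inequality gives $|\nabla(\Psi\circ u)|=|\nabla u|/\sqrt{2G(u)}\le 1$, so $\Psi\circ u$ is $1$-Lipschitz and vanishes on $\mathcal C$, hence $|\Psi(u(x))|\le\operatorname{dist}(x,\mathcal C)$, which is \eqref{estimate} after inverting $\Psi$. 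This is a genuinely different route from yours --- a single global gradient inequality replaces the entire comparison machinery --- and it is precisely what makes a sliding argument unnecessary in a setting (unbounded domain, non-monotone $f$) where sliding is delicate. If you want to retain a comparison-style proof, the honest conclusion is that you would first need an independent proof of the far-field convergence $\tilde u\to M$, and the cleanest such proof I know of again passes through Modica.
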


Estimate \eqref{estimate} follows easily from an important estimate
for global solutions of semilinear equations, called Modica estimate~\cite{M1}.

Finally we recall here the statement of the theorem of Jerison and Monneau 
\cite{JM} establishing that  the existence of a bounded, 
even with respect to each coordinate, global minimizer of \eqref{GL} in 
$\R^{n-1}$ would yield the existence of a counter-example to the conjecture 
of De Giorgi in $\R^{n}$.

\begin{theorem}[\textbf{Jerison-Monneau} \cite{JM}]\label{JM}
Let $G\in C^{2}(\R)$ satisfy
$$
G\geq 0\ {\rm in }\;\R ,\quad
G>0\ {\rm in }\; (-1,1),\ {\rm and }\quad G(1)=G(-1)=0.$$
Assume that there
exists a global minimizer $v$ of $\Delta v=G'(v)$  on $\R^{n-1}$ such that $|v|<1$ and
$v$ is even with respect to each coordinate $x_i,1\leq
i\leq n-1$. Then,
for each $\gamma\in(0,\sqrt{2G(v(0))})$  there exists a solution
$u\in C^2(\R^n)$ to $$\Delta u=G'(u) \quad \text{ in }\R^n$$ satisfying
$$|u|\leq 1\quad {\rm and }\quad \partial_{x_n}u>0\quad {\rm in
 }\;\R^n,$$
 and such that, for one
$\lambda\in\R$, the set $\{u=\lambda\}$ is not a hyperplane.

Moreover, this solution $u$ is a global minimizer in $\R^n$,
it is even in the first $n-1$ coordinates, and
satisfies $\partial_{x_n}u(0)=\gamma$ and $u(0)=v(0)$.
\end{theorem}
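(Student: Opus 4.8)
The plan is to follow the variational strategy of Jerison and Monneau: one produces $u$ as a locally uniform limit of energy minimizers on large balls in $\R^n$, using a constraint or boundary prescription built out of $v$ and the one-dimensional profile, with the one free parameter of the construction tuned so that $\partial_{x_n}u(0)=\gamma$. I would fix once and for all the increasing heteroclinic $u_0$ of Lemma~\ref{lemma1D}, normalized so that $u_0(0)=v(0)$; by the Hamiltonian identity \eqref{hamilt} this forces $\dot u_0(0)=\sqrt{2G(v(0))}$, so that the admissible parameters $\gamma\in(0,\sqrt{2G(v(0))})$ are exactly the slopes strictly below the one-dimensional slope at the level $v(0)$. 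This strict gap is the compatibility condition keeping the construction nondegenerate: were $\gamma$ allowed to equal $\sqrt{2G(v(0))}$, the only competitor would be the one-dimensional profile itself and no nonplanar solution could arise.

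Write $x=(x',x_n)\in\R^{n-1}\times\R$. For large $R$ I would minimize the energy ${\mathcal E}(\cdot,B_R)$ over $H^1(B_R)$ functions $w$ with $|w|\le 1$, even in each variable $x_i'$ ($1\le i\le n-1$), which agree on $\partial B_R$ with a fixed comparison function interpolating the profile $u_0$ climbing monotonically from $-1$ to $1$, rescaled so that its transition across $\{x_n=0\}$ carries slope $\gamma$ and ``seeded'' by $v$ near that hyperplane (the coupling to $v$ being imposed through the comparison data, or through a one-sided constraint on a neighbourhood of the slice $\{x_n=0\}$, designed so that the limit must reproduce a nonplanar level set of $v$). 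Each minimizer $u_R$ solves $\Delta u_R=G'(u_R)$ in $B_R$, is even in $x'$ by uniqueness together with a folding argument, and satisfies the uniform bound ${\mathcal E}(u_R,B_\rho)\le C\rho^{n-1}$ for $R>\rho$, obtained by comparison with explicit competitors. Interior elliptic estimates then give uniform $C^{2,\alpha}_{\rm loc}$ bounds, and a diagonal subsequence converges in $C^2_{\rm loc}$ to some $u\in C^2(\R^n)$ with $|u|\le 1$ solving $\Delta u=G'(u)$ in all of $\R^n$, even in $x'$; global minimality passes to the limit from the minimality of the $u_R$ and lower semicontinuity of ${\mathcal E}$, once one checks that the $v$-seeding is inactive in the interior in the limit. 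One then normalizes so that $u(0)=v(0)$ and $\partial_{x_n}u(0)=\gamma$.

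For the monotonicity $\partial_{x_n}u>0$ I would avoid moving planes (which do not adapt well to this geometry) and argue directly from minimality. For $\tau>0$, both $\min\{u,\,u(\cdot,\cdot+\tau)\}$ and $\max\{u,\,u(\cdot,\cdot+\tau)\}$ are again global minimizers, and the standard strong maximum principle dichotomy for minimizers forces either $u\le u(\cdot,\cdot+\tau)$ or $u\ge u(\cdot,\cdot+\tau)$ everywhere; combining this with the limits $u\to\pm1$ as $x_n\to\pm\infty$ —themselves obtained by sandwiching $u$ between translates of $\pm u_0$— selects $u<u(\cdot,\cdot+\tau)$, and differentiating at $\tau=0^+$ gives $\partial_{x_n}u\ge 0$, upgraded to $>0$ by the strong maximum principle applied to the linear equation satisfied by $\partial_{x_n}u$.

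Finally, if every level set of $u$ were a hyperplane, then $u$ being monotone and bounded it would be one-dimensional, $u=u_0(b\cdot x+c)$ with $|b|=1$ and $b\cdot e_n>0$, so the slice $u(\cdot,0)=u_0(b'\cdot x'+c)$ would have all its level sets parallel hyperplanes of $\R^{n-1}$; this contradicts the construction, which has forced $u$ to carry (through its slice, or after a blow-down through the associated minimal hypersurface of $\R^n$) a nonplanar level set inherited from $v$ —note that $v$ is nonconstant and even in each $x_i'$, hence has at least one level set that is not a single hyperplane. This produces $\lambda$ with $\{u=\lambda\}$ not a hyperplane, completing the construction. The main obstacle is precisely the design in the second step: one must couple the bounded-domain problem to $v$ strongly enough that the limit cannot be one-dimensional, yet weakly enough that $u$ remains a genuine global solution of the PDE on all of $\R^n$ with the prescribed slope $\gamma$ at the origin. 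Imposing $u(\cdot,0)=v$ outright is not permissible —by unique continuation it would force $u(x)=v(x')$ for all $x$ and destroy monotonicity— so the coupling must act through a one-sided or asymptotic constraint, and verifying that the limit is nondegenerate (neither collapsing to $\pm1$ nor to the one-dimensional profile) is exactly where the strict inequality $\gamma<\sqrt{2G(v(0))}$ enters.
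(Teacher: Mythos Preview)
The paper does not prove this theorem; it is quoted in Section~2 as a background result from Jerison--Monneau~\cite{JM}, so there is no proof in the paper to compare your attempt against.

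That said, your outline has the correct overall architecture (minimize on large balls among functions even in $x'$, extract a limit, derive monotonicity from minimality via the $\min/\max$ of translates, and deduce non-planarity from the slope condition together with evenness), and your final paragraph correctly identifies why the strict inequality $\gamma<\sqrt{2G(v(0))}$ is the right compatibility condition: an even, one-dimensional monotone solution with $u(0)=v(0)$ is forced to be $u_0(x_n)$ and hence to have slope exactly $\sqrt{2G(v(0))}$.

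The genuine gap is the one you yourself flag: you never say how $v$ actually enters the construction. ``Seeding'' the boundary or slice data with $v$, or imposing a one-sided constraint near $\{x_n=0\}$, is not how the Jerison--Monneau argument works, and your own unique-continuation remark shows why a hard coupling is problematic. In their construction $v$ is not used as Dirichlet data at all; rather, one builds the even minimizer $u$ in $\R^n$ directly (via minimization on balls among $x'$-even competitors with $\pm1$-type boundary values) and then uses $v$ as an energy competitor. Because $v$ is a global minimizer in $\R^{n-1}$, a test function that equals $v(x')$ on a thick slab around $\{x_n=0\}$ and transitions to $\pm1$ outside has energy per unit cross-section strictly smaller than the one-dimensional profile $u_0(x_n)$ would yield; this forces the limit minimizer $u$ to be non-one-dimensional, and then Modica's inequality gives $|\nabla u(0)|<\sqrt{2G(u(0))}$, so that (after a translation normalizing $u(0)=v(0)$) the slope $\partial_{x_n}u(0)$ lands strictly below $\sqrt{2G(v(0))}$. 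A continuity argument in the translation parameter then realizes every $\gamma$ in the open interval. Without this competitor step, nothing in your scheme prevents the limit from collapsing to $u_0(x_n)$, and the sketch as written is not a proof.
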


\section{Minimal and maximal saddle solutions}\label{sctminmax}

As in the work of Dang, Fife, and Peletier \cite{DFP} in dimension 2, in this
section we show the existence in higher dimensions
of a minimal, $\usub$, and a maximal,
$\usup$,  saddle solutions and we
prove the monotonicity properties of $\usup$ stated in
Theorem \ref{teominmax}. We recall that we establish, among other properties,  
that the maximal solution is increasing in the $z$ direction, fact that we will 
use in the proof of instability of saddle solutions
in dimension $6$ (see section~\ref{sctuns6}).

Moreover, we prove Proposition~\ref{cstable} stating the stability 
under perturbations vanishing on the Simons cone of saddle solutions.

The proof of Theorem \ref{teominmax} and of Proposition
\ref{cstable} are given at the end of subsection \ref{susct342}.

\subsection{Existence of minimal and maximal solutions}\label{susct341}

For $R>0$, consider the open sets
\begin{equation*}
T_R=\{x\in\R^{2m}: t<s<R\}.
\end{equation*}
and
\begin{equation*}
O_R={\mathcal O}\cap B_R=\{x\in\R^{2m}:  s>t\} \cap B_R,
\end{equation*}
where $B_R$ is the open ball in $\R^{2m}$ of radius $R$ centered at $0$. 
Note that $T_R\supset {\mathcal O}_R$.

In the next lemmas, we will need the following simple facts on the
nonlinearity.
 
\begin{remark}\label{f'dec}
{\rm
Let $f$ satisfy assumptions \eqref{H}. Define
\begin{equation}\label{defg}
g(\rho):=f(\rho)-f'(M)\rho \qquad\text{for } 0\leq\rho\leq M. 
\end{equation}
Then,  $g$ is positive and increasing in $(0,M)$. Indeed,
since $f'$ is decreasing in $(0,M)$,
$g'(\rho)=f'(\rho)-f'(M)>0$ in $(0,M)$.
That is, $g$ is increasing in $(0,M)$. Since $g(0)=0$, we deduce
that $g>0$ in $(0,M)$.

Another fact that we will use is that
$f(\rho)/\rho$ is
decreasing in $(0,M)$. Indeed, given  $0<\rho<M$ 
there exists $\rho_1$ with $0<\rho_1<\rho$ and 
\begin{equation}\label{frho}
\frac{f(\rho)}{\rho}=\frac{f(\rho)-f(0)}{\rho -0}=f'(\rho_1)>f'(\rho),
\end{equation} 
since $f'$ is decreasing in $(0,M)$.
Therefore,
$$\left(\frac{f(\rho)}{\rho}\right)'=\frac{f'(\rho)\rho-f(\rho)}{\rho^2}=
\frac{f'(\rho)-f'(\rho_1)}{\rho}<0.$$
}
\end{remark}

The following definition and remark on stability of positive solutions
will be used in some of the next lemmas, as well as in the proof of
Proposition~\ref{cstable}.

\begin{definition}\label{defstablebounded}
{\rm
Let $\Omega\subset\R^n$ be an open set.
We say that a bounded solution $u$ of $-\Delta u = f (u)$ in $\Omega$
is {\it semi-stable} in $\Omega$ if the second
variation of energy $\delta^2{\mathcal E}/\delta^2\xi$ with respect to
perturbations $\xi\in H_0^1(\Omega)$ is nonnegative. 
That is, if 
\begin{equation}\label{semist}
Q_u(\xi):=\int_{\Omega}\left\{
|\nabla\xi|^2-f'(u)\xi^2\right\}dx \geq  0 \quad \text{ for  all }
\xi\in H_0^1(\Omega).
\end{equation}
}
\end{definition}

\begin{remark}\label{stability}
{\rm
Let $f$ satisfy assumptions \eqref{H} and $\Omega\subset\R^n$ be an open set
(bounded or unbounded).
Let $u$ 
be a positive solution of $-\Delta u = f (u)$ in $\Omega$
such that $0<u<M$ in $\Omega$.
Then, $u$ is a semi-stable solution in $\Omega$.

The proof of this fact is simple. 
By \eqref{frho}, $f'(w)\leq f(w)/w$ for all 
real numbers $w\in (0,M)$. Hence we have
\begin{equation}\label{possuper}
-\Delta u =f(u) \geq f'(u) u \qquad\text{in } \Omega.
\end{equation}
That is, $u$ is a positive supersolution of the linearized
problem $-\Delta - f'(u)$ at $u$ in all of $\Omega$.
We claim that, as a consequence, the quadratic form $Q_u(\xi)$ 
is nonnegative for all $\xi\in C^1$ with compact support in 
$\Omega$. By approximation, the same
holds for all $\xi\in H^1_0(\Omega)$.

This claim can be proved in two different ways. First, by a simple 
integration by parts argument. It consists of taking any
$\xi\in C^1$ with compact support in $\Omega$, multiplying \eqref{possuper}
by $\xi^2/u$, and 
integrating by parts to get
$$
\int_{\Omega} f'(u)\xi^2 = \int_{\Omega} f'(u) u \frac{\xi^2}{u} \leq 
\int_{\Omega} (-\Delta u) \frac{\xi^2}{u} \leq \int_{\Omega} 
\nabla u \nabla\xi \frac{2\xi}{u} -\int_{\Omega} \frac{|\nabla u|^2}{u^2} \xi^2.
$$
Now, using the Cauchy-Schwarz inequality, we are led to \eqref{semist}.

Another proof of the claim is the following. Since the linearized
operator has a positive supersolution, its principal eigenvalue
is nonnegative (see \cite{BNV}). Since the principal eigenvalue
coincides with the first eigenvalue, Rayleigh criterion gives \eqref{semist}.
}
\end{remark}

Our first lemma concerns the existence of a maximal solution in $T_R$
and some of its properties.

\begin{lemma}\label{maxR}
Let $f$ satisfy conditions \eqref{H}. Then, there exists a positive
solution $\usup_R$ of 
\begin{equation}\label{exisusupsta}
\left\{
\begin{array}{rcll}
-\Delta \usup_R & = & f(\usup_R) &
  \text{ in } T_R:=\{ t<s<R\}\\
\usup_R & = & u_0(z) &  \text{ on }\; \partial T_R,
\end{array}
\right.
\end{equation}
with $0<\usup_R<M$ in $T_R$, 
which is maximal in the following sense. We have that $\usup_R\geq u$ in $T_R$ 
for every positive solution $u$ of $-\Delta u=f(u)$ in $T_R$ satisfying  
$u \leq u_0(z)$ in all of $\overline{T_R}$.

Moreover, $\usup_R$ is semi-stable in $T_R$ in the sense of 
Definition~\ref{defstablebounded}, and $\usup_R$ depends only on $s$ and $t$.
\end{lemma}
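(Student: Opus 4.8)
The plan is to construct $\usup_R$ as the maximal element below the barrier $u_0(z)$ via a monotone iteration (method of sub- and supersolutions) on the bounded domain $T_R$, and then to read off its properties from the construction. First I would observe that the boundary datum $u_0(z)$ is itself a supersolution of $-\Delta u = f(u)$ in $T_R$: by Proposition~\ref{prop}, $U(x)=u_0((s-t)/\sqrt2)$ is a supersolution of the equation in $\mathcal O=\{s>t\}\supset T_R$, and on $\partial T_R$ it agrees with the prescribed data. Since $f$ is odd with $f(0)=0$, the constant $0$ is a subsolution, and $0\le u_0(z)<M$ on $\overline{T_R}$. Thus the ordered pair $(\usub,\usup)=(0,\,u_0(z))$ frames the problem, and the classical monotone iteration scheme produces a solution $\usup_R$ with $0\le \usup_R\le u_0(z)$ in $T_R$ and $\usup_R=u_0(z)$ on $\partial T_R$. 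To get the maximality I would use the standard fact that the iteration started from the supersolution $u_0(z)$ converges \emph{decreasingly} to the \emph{largest} solution lying below the supersolution: if $u$ is any positive solution with $u\le u_0(z)$ on all of $\overline{T_R}$, then $u$ is also $\le$ every iterate (the iteration operator $w\mapsto (-\Delta+\lambda)^{-1}(f(w)+\lambda w)$ is monotone for $\lambda$ large, using that $f(\rho)+\lambda\rho$ is increasing on $[0,M]$), hence $u\le\usup_R$.

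Next I would establish the strict bounds $0<\usup_R<M$ in $T_R$. Strict positivity: $\usup_R\ge 0$, $\usup_R$ is not identically $0$ (it has nonzero boundary data), and $-\Delta\usup_R = f(\usup_R)\ge f'(M)\usup_R$ by the concavity-type inequality $f(\rho)/\rho\ge f'(\rho)$ of Remark~\ref{f'dec} applied... more directly, $f(\rho)\ge f'(M)\rho$ on $[0,M]$ since $g(\rho)=f(\rho)-f'(M)\rho\ge 0$; so $(-\Delta - f'(M))\usup_R\ge 0$ with $\usup_R\ge0,\not\equiv0$, and the strong maximum principle gives $\usup_R>0$ in $T_R$. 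The upper bound $\usup_R<M$ follows because $\usup_R\le u_0(z)<M$ in the interior (note $u_0$ takes values in $(-M,M)$, and $z=(s-t)/\sqrt2\ge 0$ only up to the value attained on $\overline{T_R}$, staying $<M$).

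For semi-stability I would invoke Remark~\ref{stability} verbatim: $\usup_R$ is a positive solution of $-\Delta u=f(u)$ in $T_R$ with $0<\usup_R<M$, hence it is a positive supersolution of the linearized operator $-\Delta - f'(\usup_R)$ (again by $f(\rho)\ge f'(\rho)\rho$), and therefore $Q_{\usup_R}(\xi)\ge 0$ for all $\xi\in H^1_0(T_R)$, i.e. $\usup_R$ is semi-stable. Finally, for the dependence on $s$ and $t$ only, I would use uniqueness of the maximal solution together with the symmetry of the problem: the domain $T_R=\{t<s<R\}$, the datum $u_0(z)$, and the equation $-\Delta u=f(u)$ are all invariant under rotations in the first $m$ coordinates and (independently) in the last $m$ coordinates; applying any such rotation $\mathcal R$ to $\usup_R$ gives another solution of \eqref{exisusupsta} lying below $u_0(z)$, hence $\usup_R\circ\mathcal R\le\usup_R$, and applying $\mathcal R^{-1}$ gives the reverse inequality, so $\usup_R$ is invariant under the full symmetry group and thus depends only on $s=|x^1|$ and $t=|x^2|$.

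The main obstacle I expect is not any single step but making the maximality statement airtight: one must be careful that the monotone iteration really converges to the \emph{largest} solution below $u_0(z)$ (not merely \emph{a} solution), which requires the monotonicity of the iteration map and a comparison of an arbitrary competitor $u$ with every iterate; and one must check that an arbitrary positive solution $u\le u_0(z)$ in $\overline{T_R}$ is automatically $<M$ so that it lies in the range where $f(\rho)+\lambda\rho$ is increasing and the comparison applies. This is routine but is the place where the hypotheses \eqref{H} (oddness, $0\le G$, $f'$ decreasing) are genuinely used, through Remark~\ref{f'dec}.
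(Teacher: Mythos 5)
Your proposal is correct and follows essentially the same route as the paper: monotone iteration starting from the supersolution $u_0(z)$ (and subsolution $0$) on $T_R$, with the iteration operator made monotone by rewriting the equation as $(-\Delta-f'(M))u=g(u)$ (your ``$\lambda$ large'' is exactly the paper's choice $\lambda=-f'(M)\ge0$, since $f'$ decreasing makes $g=f-f'(M)\,\mathrm{id}$ increasing on $[0,M]$), maximality proved by comparing an arbitrary competitor with each iterate, and semi-stability read off from Remark~\ref{stability}. The only minor variant is how you obtain dependence on $s,t$: the paper argues inductively that every iterate $\usup_{R,k}$ inherits the rotational invariance of the linear problem, whereas you deduce it a posteriori from the uniqueness of the maximal solution together with the invariance of $T_R$, $u_0(z)$, and the equation under the relevant block rotations — both are valid and short.
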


\begin{proof}
Let  $u_0$ be the  solution in $\R$ defined
by \eqref{u0} and $z=(s-t)/\sqrt{2}$. With $g$ as defined in
\eqref{defg}, we write equation $-\Delta u = f(u)$ as
$\{ -\Delta -f'(M) \}u = g(u)$.

We use the method of monotone iteration.
Define a sequence of functions $\usup_{R,k}$ by $\usup_{R,0}(x)  =  u_0(z)$ and
by solving the linear problems
\begin{equation}\label{exisusup}
\left\{
\begin{array}{rcll}
\{ -\Delta -f'(M) \}\usup_{R,k+1} & = & g(\usup_{R,k}) &
  \text{ in } T_R\\
\usup_{R,k} & = & u_0(z) &  \text{ on }\; \partial T_R.
\end{array}
\right.
\end{equation}
Since $-\Delta-f'(M)$ is obtained by adding a positive constant to $-\Delta$,
it satisfies the maximum principle and hence the above problem admits
a unique solution $\usup_{R,k+1}=\usup_{R,k+1}(x)$. Furthermore (and
here we argue by induction), since the problem and its data are invariant
by orthogonal transformations in the first (respectively, in the last)
$m$ variables $x_i$, the solution $\usup_{R,k+1}$ depends only on $s$
and $t$.

Using that
$\usup_{R,0}(x)=u_0(z)$ is a supersolution of \eqref{exisusup}
(as stated in Proposition~\ref{prop}),
let us show that the sequence $\usup_{R,k}$ is nonincreasing in $k$.
More precisely,
$$
u_0(z)=\usup_{R,0}\geq \usup_{R,1}\geq \dots \geq
\usup_{R,k}\geq \usup_{R,k+1}\geq \dots\geq 0 \quad \text{in } T_R.
$$
Indeed, we have
$$\{ -\Delta  -f'(M) \} \usup_{R,1}= f(\usup_{R,0})-f'(M)\usup_{R,0} \leq \{ -\Delta -f'(M) \} \usup_{R,0}$$
in $T_R$. Hence $\usup_{R,1}\leq\usup_{R,0}$.
We use now that $0$ a subsolution of \eqref{exisusup}.
Since $\{ -\Delta-f'(M) \}0=0\leq
g(\usup_{R,0})$ in $T_R$ and $0\leq u_0(z)$ on
$\partial T_R$, we deduce
$\usup_{R,1}\geq 0$.

Assume now that $0\leq \usup_{R,k}\leq \usup_{R,k-1}$ in $T_R$, for some $k\geq 1$.
Then, since $g$ is increasing in $(0,M)$ (see Remark~\ref{f'dec}),
$g(\usup_{R,k})\leq \nolinebreak g(\usup_{R,k-1})$, and thus
$$\{ -\Delta  -f'(M) \} \usup_{R,k+1} = g(\usup_{R,k}) \leq g(\usup_{R,k-1})=\{ -\Delta -f'(M) \} \usup_{R,k}.$$
Again by the maximum principle $\usup_{R,k+1}\leq
\usup_{R,k}$. Besides, $\usup_{R,k+1}\geq 0$ since $g(\usup_{R,k})\geq 0$. 

Next, by monotone convergence this sequence converges to a nonnegative
solution $\usup_R$ of $-\Delta u=f(u)$ in $T_R$, 
which depends only on $s$ and $t$, and
such that $\usup_R\equiv 0$ on ${\mathcal C}\cap\overline{T_R}$. Since
$\usup_R=u_0(z)$ on $\{s=R\}\cap \overline{T_R}$, $\usup_R$ is not identically
$0$, and thus the strong maximum
principle and $f(0)=0$ lead to $\usup_R >0$ in $T_R$. 

Moreover, $\usup_R$
is maximal as stated in the lemma. Indeed, by assumption $0<u\leq u_0(z)$ in
all $T_R$. Assume now that $0<u\leq \usup_{R,k}$ for some $k\geq 0$. 
We then have
$$
\{ -\Delta  -f'(M) \} u = g(u) \leq g(\usup_{R,k})=\{ -\Delta -f'(M) \}
\usup_{R,k+1}
$$ 
in $T_R$ and $u\leq u_0(z)= \usup_{R,k+1}$ on $\partial T_R$.
Thus $u\leq \usup_{R,k+1}$ in $T_R$. By induction $u\leq \usup_{R,k+1}$ 
for all $k$ and hence
$u\leq \usup_R$ in $T_R$.

Finally, the semi-stability of $\usup_R$ in $T_R$ follows directly from
Remark~\ref{stability}.
\end{proof}

\begin{proposition}\label{max}
Let $f$ satisfy conditions  \eqref{H}. Then,
there exists a positive solution $\usup$ of 
\begin{equation}\label{dirmax}
\left\{
\begin{array}{rcll}
-\Delta \usup  & = & f(\usup) &
  \text{ in } {\mathcal O}=\{s>t\}\\
\usup & = & 0 &  \text{ on } {\mathcal C}= \partial {\mathcal O},
\end{array}
\right.
\end{equation}
with $0<\usup <M$ in ${\mathcal O}$, which is maximal in the following sense.
We have that $\usup\geq u$ in ${\mathcal O}$
for every bounded solution $u$ of $-\Delta u=f(u)$ 
in $\R^{2m}$ that vanishes on the Simons cone and
has the same sign as $s-t$.

In addition, $\usup$ depends only on $s$
and $t$.
\end{proposition}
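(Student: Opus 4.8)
The plan is to pass to the limit $R\to\infty$ in the family $\{\usup_R\}$ of maximal solutions on $T_R$ constructed in Lemma~\ref{maxR}. First I would establish a monotonicity property in $R$: if $R_1<R_2$, then $\usup_{R_2}$ restricted to $T_{R_1}$ is a positive solution of $-\Delta u=f(u)$ in $T_{R_1}$ satisfying $u=\usup_{R_2}\le u_0(z)$ on $\partial T_{R_1}$ (the bound on $\{s=R_1\}$ uses $0<\usup_{R_2}<M$ together with the pointwise estimate of Proposition~\ref{prop}, or more directly the fact that $u_0(z)$ is a supersolution in $\mathcal O$ combined with the boundary comparison), and hence by the maximality of $\usup_{R_1}$ we get $\usup_{R_2}\le\usup_{R_1}$ in $T_{R_1}$. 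Thus for each fixed $x\in\mathcal O$ the sequence $R\mapsto\usup_R(x)$ (defined once $R$ is large enough that $x\in T_R$) is nonincreasing and bounded below by $0$, so it has a pointwise limit
$$
\usup(x):=\lim_{R\to\infty}\usup_R(x),\qquad 0\le\usup\le u_0(z)\ \text{ in }\mathcal O.
$$

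Next I would upgrade this pointwise convergence to convergence in $C^2_{loc}(\mathcal O)$. Since $0\le\usup_R<M$, the right-hand sides $f(\usup_R)$ are uniformly bounded on compact subsets of $\mathcal O$, so interior elliptic estimates ($W^{2,p}$ followed by Schauder, as recalled in Section~2) give uniform $C^{2,\alpha}_{loc}(\mathcal O)$ bounds; by Arzel\`a--Ascoli and the already-known pointwise limit, $\usup_R\to\usup$ in $C^2_{loc}(\mathcal O)$, so $\usup$ solves $-\Delta\usup=f(\usup)$ in $\mathcal O$ with $0\le\usup\le M$. Because each $\usup_R$ depends only on $s$ and $t$, so does $\usup$. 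To get the boundary condition $\usup=0$ on $\mathcal C$, I would use the barrier $u_0(z)$: since $0\le\usup\le u_0(z)$ in $\mathcal O$ and $u_0(z)\to0$ as $z=(s-t)/\sqrt2\to0$, we have $\usup\to0$ at every point of $\mathcal C$, and a standard argument (or $C^{1}$ regularity up to the boundary via the same estimates, noting $\partial\mathcal O\setminus\{0\}$ is smooth) makes $\usup$ a solution of the Dirichlet problem \eqref{dirmax}; the origin causes no trouble since $\usup$ is bounded and $\{0\}$ has zero capacity. Then $\usup\not\equiv0$ (it is bounded below near $\mathcal C$ on rays going to infinity, or simply because $\usup_R\ge$ a fixed positive solution on a fixed ball once one exhibits one), so the strong maximum principle together with $f(0)=0$ forces $\usup>0$ in $\mathcal O$, and then the strict inequality $\usup<M$ in $\mathcal O$ follows from the strong maximum principle applied to $M-\usup$, using $f(M)=0$ (consequence of $G'(M)=0$ in \eqref{H}) and $f(M-\usup)\le f'(\xi)\cdot$ something of the right sign; more simply, if $\usup(x_0)=M$ at an interior point then $M$ is an interior maximum of a solution and the strong maximum principle gives $\usup\equiv M$, contradicting $\usup=0$ on $\mathcal C$.

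It remains to prove maximality. Let $u$ be any bounded solution of $-\Delta u=f(u)$ in $\R^{2m}$ that vanishes on $\mathcal C$ and has the same sign as $s-t$; then $0<u$ in $\mathcal O$ and, by Proposition~\ref{prop}, $u\le u_0(z)$ in $\mathcal O$, in particular $u\le u_0(z)$ on $\overline{T_R}$ for every $R$. By the maximality statement of Lemma~\ref{maxR}, $u\le\usup_R$ in $T_R$ for all $R$, and letting $R\to\infty$ gives $u\le\usup$ in $\mathcal O$. (One subtle point: Lemma~\ref{maxR} as stated requires $u$ to be a positive solution \emph{in $T_R$} with $u\le u_0(z)$ \emph{on all of $\overline{T_R}$}; the latter includes the face $\{s=R\}$, where it holds because $u\le u_0(z)$ throughout $\mathcal O$, not merely $0<u<M$ — this is exactly where Proposition~\ref{prop} is used, and it is worth stating explicitly.)

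The main obstacle I anticipate is not any single deep step but the careful handling of the boundary behavior near the cone and at the vertex: making rigorous that $\usup$ genuinely attains the boundary value $0$ on $\mathcal C$ (using $u_0(z)$ as an upper barrier and, if one wants $C^1$ up-to-the-boundary convergence, a matching construction near $\mathcal C\setminus\{0\}$), and checking that the singular point $0\in\mathcal C$ is removable so that $\usup$ is a bona fide solution of the Dirichlet problem \eqref{dirmax} in the stated sense. Everything else — the monotone limit in $R$, interior compactness, strong maximum principle for strict positivity and the bound $<M$, and the passage to the limit in the maximality inequality — is routine given Lemma~\ref{maxR} and Proposition~\ref{prop}.
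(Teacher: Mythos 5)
Your proof is correct and follows essentially the same route as the paper: pass to the limit $R\to\infty$ in the family $\usup_R$ of Lemma~\ref{maxR} and transfer maximality from $\usup_R$ to the limit via Proposition~\ref{prop}. The one genuine refinement you add is the monotonicity in $R$ ($\usup_{R_2}\le\usup_{R_1}$ on $T_{R_1}$ when $R_1<R_2$, which holds because $\usup_{R_2}\le u_0(z)$ on all of $\overline{T_{R_2}}\supset\overline{T_{R_1}}$ by construction of the monotone iteration); this makes the limit a full monotone limit rather than a subsequential one and is a cleaner way to obtain a well-defined $\usup$. The paper instead extracts a subsequence and implicitly relies on the maximality property to pin down the limit, and for non-triviality it appeals directly to the existing saddle solution of Theorem~\ref{exis} (mentioning the subsolution of Remark~\ref{subsol} as an alternative), which is slightly more economical than your parenthetical sketch but amounts to the same idea. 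Both versions are sound.
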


\begin{proof}
By elliptic estimates and a compactness argument, 
the limit as $R\to\infty$ of the solutions $\usup_{R}$ of Lemma~\ref{maxR} 
exists (up to subsequences) in every compact set of $\overline {\mathcal O}$. 
We obtain a solution $\usup$ in
${\mathcal O}=\{s>t\}$ such that $\usup=0$ on ${\mathcal C}$
and $0\leq \usup \leq M$ in ${\mathcal O}$.
Clearly, $u$ depends only on $s$ and $t$.

Let us next establish the maximality of $\usup$. Let $u$ be a solution
as in the statement. By Proposition~\ref{prop}, 
we have $u\leq u_0(z)$ in ${\mathcal O}$. Thus, by Lemma~\ref{maxR},
$u\leq \usup_{R}$ in $T_R$ for all $R$. 
It follows that $u\leq \usup$ in ${\mathcal O}$.

Finally, we show that $\usup$ is not identically $0$. Indeed,
by maximality of $\usup$ and the existence of saddle
solution $u$ of Theorem~\ref{exis}, we deduce that $\usup\geq u >0$ in 
${\mathcal O}$. An alternative way to prove $u\not\equiv 0$ is to use the
subsolution of next remark, by placing such subsolution below
all maximal solutions $\usup_{R}$. 
\end{proof}

\begin{remark}
\label{subsol}
{\rm
The following subsolution is useful in several arguments.
Let $R$ be large enough such that ${\mathcal O}_R={\mathcal O}\cap B_R$ 
contains a closed ball $B$ of sufficiently large radius to guarantee that 
$\lambda_1<f'(0)$, where $\lambda_1$ is the first Dirichlet eigenvalue of the
Laplacian in $B$. 
Let $\phi_1>0$ in $B$ be the first Dirichlet eigenfunction of $-\Delta$ in $B$ and
$\varepsilon>0$ a constant small enough. Then,
$$-\Delta(\varepsilon\phi_1)=\lambda_1 \varepsilon\phi_1\leq
f(\varepsilon\phi_1)\; \text{ in } B.$$ 
The last inequality holds since 
$f(\varepsilon\phi_1)/(\varepsilon\phi_1)\geq f'(\varepsilon\phi_1)$ for $\varepsilon$
small (recall that $f$ is concave in $(0,M)$),
and also $f'(\varepsilon\phi_1)>\lambda_1$ for $\varepsilon$ small, since we chose
$B$ such that $f'(0)>\lambda_1$. Therefore, 
$\varepsilon\phi_1$ extended by zero in ${\mathcal O}_R\setminus B$ is a
subsolution to problem
\begin{equation*}
\left\{\begin{array}{rcll}
-\Delta v& = & f(v) & \text{ in } {\mathcal O}_R\\
v& = & 0 & \text{ on } \partial{\mathcal O}_R,
\end{array}\right.
\end{equation*}
which is positive in $B$, a ball with compact closure contained in
${\mathcal O}_R$.
}
\end{remark}

Next we start studying the existence of a minimal saddle solution.
Among other things, the next lemma establishes 
the uniqueness
of positive solution in ${\mathcal O}_R$ taking values in $(0,M)$ and vanishing
on $\partial{\mathcal O}_R$. This uniqueness is a well known general
result that only requires $f(\rho)/\rho$ to be decreasing in
$(0,M)$.

\begin{lemma}\label{minR}
Let $f$ satisfy conditions \eqref{H}. Then, for $R$ large enough, there
exists a unique
positive solution $\usub_R$ of 
\begin{equation}\label{dirunique}
\left\{
\begin{array}{rcll}
-\Delta u & = & f(u) &
  \text{ in } {\mathcal O}_R:=\{ s>t \}\cap B_R\\
u & < & M &  \text{ in }\; {\mathcal O}_R\\
u & = & 0 &  \text{ on }\; \partial {\mathcal O}_R,
\end{array}
\right.
\end{equation}
which is minimal in the following sense. We have that $\usub_R\leq u$ in
 ${\mathcal O}_R$ 
for every positive solution $u$ of $-\Delta u=f(u)$ in ${\mathcal O}_R$.

Moreover, $\usub_R$ is semi-stable in ${\mathcal O}_R$ in the sense of 
Definition~\ref{defstablebounded}, and $\usub_R$ depends only on $s$ and $t$.
\end{lemma}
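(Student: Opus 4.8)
The plan is to construct $\usub_R$ by monotone iteration from below, starting from the subsolution of Remark~\ref{subsol}, and to establish uniqueness via the standard convexity/Picone-type argument that only uses the monotonicity of $f(\rho)/\rho$ on $(0,M)$. First I would fix $R$ large enough so that ${\mathcal O}_R$ contains a ball $B$ with $\lambda_1(B)<f'(0)$, so that Remark~\ref{subsol} provides a subsolution $\usub_{R,0}:=\varepsilon\phi_1$ (extended by $0$) with $0\le \usub_{R,0}<M$ in ${\mathcal O}_R$. As in the proof of Lemma~\ref{maxR}, I would write $-\Delta u=f(u)$ as $\{-\Delta-f'(M)\}u=g(u)$ with $g$ as in \eqref{defg}, which is increasing on $(0,M)$ by Remark~\ref{f'dec}, and iterate
$$
\{-\Delta-f'(M)\}\usub_{R,k+1}=g(\usub_{R,k})\ \text{ in }{\mathcal O}_R,\qquad \usub_{R,k+1}=0\ \text{ on }\partial{\mathcal O}_R.
$$
Since $\usub_{R,0}$ is a subsolution and the constant $M$ is a supersolution (as $g(M)=f(M)-f'(M)M\le f'(M)M-f'(M)M=0$ using $f(M)\le f'(M)M$, which follows from concavity; or simply $-\Delta M=0=f(M)$ since $f(M)=0$), the maximum principle for $-\Delta-f'(M)$ and the monotonicity of $g$ give by induction $\usub_{R,0}\le\usub_{R,1}\le\cdots\le M$. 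Monotone convergence yields a solution $\usub_R$ with $0\le\usub_R\le M$; the strong maximum principle and $\usub_R\ge\usub_{R,0}>0$ in $B$ give $\usub_R>0$ in ${\mathcal O}_R$, and since $f'$ is decreasing with $\usub_R\not\equiv M$ one rules out $\usub_R\equiv M$ so that $\usub_R<M$ in ${\mathcal O}_R$. Invariance of the problem under orthogonal transformations in the first $m$ and last $m$ variables, preserved along the iteration, shows $\usub_R$ depends only on $s$ and $t$. Semi-stability of $\usub_R$ in ${\mathcal O}_R$ is immediate from Remark~\ref{stability}, since $0<\usub_R<M$.

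For minimality and uniqueness I would use the following: if $u$ is \emph{any} positive solution of $-\Delta u=f(u)$ in ${\mathcal O}_R$, then $u$ is a supersolution of the iteration scheme above (because $g(u)\ge g(\min\{u,M\})$ and on the set where $u\ge M$ one has $g(u)\le 0$ while $-\Delta u=f(u)$; more simply, one checks $u\ge\usub_{R,k}$ by induction exactly as in Lemma~\ref{maxR}, using that the subsolution $\varepsilon\phi_1$ can be chosen $\le u$ by taking $\varepsilon$ small since $u>0$ on the compact $\overline B$). Passing to the limit gives $\usub_R\le u$ in ${\mathcal O}_R$, which is the asserted minimality. Uniqueness among solutions taking values in $(0,M)$ then follows from the classical argument: given two such solutions $u,v$, test the equations against $(u^2-v^2)^+/u$ and $(v^2-u^2)^+/v$ respectively and subtract, obtaining after integration by parts
$$
\int \Big(\frac{f(u)}{u}-\frac{f(v)}{v}\Big)(u^2-v^2)\,\ge\, \int |\,\nabla u - \tfrac{u}{v}\nabla v\,|^2 \ \ge 0
$$
on $\{u>v\}$ (and symmetrically), which combined with the strict decrease of $f(\rho)/\rho$ on $(0,M)$ from Remark~\ref{f'dec} forces $u\equiv v$. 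Since $\usub_R$ lies in $(0,M)$, it is the unique positive solution with values in $(0,M)$, and by minimality it is in fact the unique positive solution at all.

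The main obstacle I anticipate is the bookkeeping at the boundary values $0$ and $M$ in the Picone/uniqueness step: the test functions $(u^2-v^2)^+/u$ must be shown to lie in $H^1_0({\mathcal O}_R)$ and the integrations by parts justified near $\partial{\mathcal O}_R$ and near any points where $u$ or $v$ could approach $0$ in the interior — but positivity in the interior and the Hopf lemma handle the latter, and the former is standard since both solutions vanish continuously on $\partial{\mathcal O}_R$. A secondary technical point is confirming that the subsolution $\varepsilon\phi_1$ can be slipped below an arbitrary positive solution $u$; this is where one uses that $u$ is bounded away from $0$ on the compact ball $\overline B\subset{\mathcal O}_R$, so choosing $\varepsilon$ small enough works. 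Everything else is a direct adaptation of the monotone-iteration scheme already carried out in Lemma~\ref{maxR}.
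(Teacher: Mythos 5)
Your overall strategy — monotone iteration, followed by a uniqueness argument using the strict decrease of $f(\rho)/\rho$, followed by a subsolution-comparison argument for minimality — is in the same spirit as the paper's, but it differs in two structural choices: you iterate upward from the subsolution $\varepsilon\phi_1$, whereas the paper iterates downward starting from the constant $M$ (thus producing the \emph{maximal} solution $\overline v$ first) and uses $\varepsilon\phi_1$ only to rule out $\overline v\equiv 0$; and for uniqueness you invoke the Picone/Brezis--Oswald identity with test functions $(u^2-v^2)^+/u$, $(v^2-u^2)^+/v$, whereas the paper simply multiplies each equation by the other solution (which is enough here because maximality already gives $v\le\overline v$). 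Both routes can be made to work, but yours has a concrete gap as written: iterating upward from $\varepsilon\phi_1$ does \emph{not} preserve the symmetry under orthogonal transformations in the first and last $m$ coordinates, because $\phi_1$ is the first eigenfunction of a Euclidean ball $B\subset{\mathcal O}_R$ and is therefore not a function of $(s,t)$. Hence the claim ``preserved along the iteration'' is false as stated, and the limit of your iteration is not obviously $s,t$-dependent. The paper avoids this by starting from $M$, which is trivially invariant. You can repair this a posteriori: once uniqueness among positive solutions taking values in $(0,M)$ is proved, composing $\usub_R$ with any such orthogonal transformation yields another positive solution in $(0,M)$, so it must coincide with $\usub_R$, giving the $s,t$-dependence; but this should be said, since the invariance-of-the-iteration argument you give does not apply.

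A second point you flag but do not resolve is the justification of the integration by parts in the Picone step. The domain ${\mathcal O}_R=\{s>t\}\cap B_R$ has a non-Lipschitz corner at the origin, and your test function $(u^2-v^2)^+/u$ involves the quotient $v^2/u$, so you need $v/u$ to stay bounded up to $\partial{\mathcal O}_R$, including near $0$. This is where the paper's explicit regularity step is essential: it proves that every positive solution of \eqref{dirunique} is $C^2(\overline{{\mathcal O}_R})$ by odd reflection across the cone ${\mathcal C}$ (turning it into a solution in $B_R\setminus\{0\}$) and a capacity argument removing the point singularity at the origin; only then is the $\varepsilon\to0$ limit in the boundary terms legitimate. ``Both solutions vanish continuously on $\partial{\mathcal O}_R$'' is not sufficient; you need the $C^1$-up-to-boundary regularity plus Hopf's lemma on the smooth part of $\partial{\mathcal O}_R$, together with the reflection/capacity argument to handle the corner, exactly as in the paper. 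Finally, your parenthetical ``$g(u)\ge g(\min\{u,M\})$'' for a general positive solution $u$ is not justified (the hypotheses \eqref{H} control $f'$ only on $(0,M)$); the clean way, which the paper uses, is to compare the iterates directly with the supersolution $\min(u,M)$ rather than with $u$ itself.
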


\begin{proof}
We claim
that, for $R$ large enough, there exists 
a maximal positive solution $\overline{v}$ of \eqref{dirunique}.
This is proved by monotone iteration as in the proof of Lemma
\ref{maxR}, now with zero boundary conditions on $\partial{\mathcal O}_R$, 
and starting the iteration with $\overline{v}_{R,0}\equiv M$, a
supersolution of \eqref{dirunique}.
Here, we use the subsolution of Remark~\ref{subsol} to guarantee
that the limit of the iteration, $\overline{v}$, is not identically
zero, and thus positive.

Using this maximal solution, we can now prove the uniqueness statement
of the lemma. 
Let $v$ be a positive solution of \eqref{dirunique}.
Since $\overline{v}$ is maximal
we have $v\leq \overline{v}$ in ${\mathcal O}_R$.

To avoid integrating by parts in all of ${\mathcal O}_R$ (since
$\partial{\mathcal O}_R$ is not a Lipschitz domain at 
$0\in\partial{\mathcal O}_R$), we consider
a ball $B_{\varepsilon}$ centered at the origin and of radius
$\varepsilon$. The integration by parts formula can now be used in the
region ${\mathcal O}_R\setminus B_{\varepsilon}$.
Multiplying the equation for $\overline{v}$ by ${v}$ and integrating over
${\mathcal O}_R$  we get
\begin{eqnarray*}
\int_{{\mathcal O}_R} f(\overline{v}){v} &  = & 
\hspace{-1em}\int_{{\mathcal O}_R\cap B_{\varepsilon}} (-\Delta
\overline{v}){v}+  \int_{{\mathcal O}_R\setminus B_{\varepsilon}} (-\Delta
\overline{v}){v} \\
& = &\hspace{-1em}\int_{{\mathcal O}_R\cap B_{\varepsilon}} (-\Delta
\overline{v}){v}+\int_{{\mathcal O}_R\setminus B_{\varepsilon}} \nabla
\overline{v}\nabla {v} -\int_{\partial B_{\varepsilon}\cap {\mathcal O}_R}
{v} \nabla \overline{v}\cdot \nu ,
\end{eqnarray*}
where $\nu$ is the outward normal on $\partial B_{\varepsilon}$ to
 ${\mathcal O}_R\setminus B_{\varepsilon}$.
 Similarly, we multiply now the equation for ${v}$ by $\overline{v}$ and obtain
\begin{eqnarray*}
\int_{{\mathcal O}_R} f({v})\overline{v} &  = & 
\hspace{-1em}\int_{{\mathcal O}_R\cap B_{\varepsilon}} (-\Delta
{v})\overline{v}+  \int_{{\mathcal O}_R\setminus B_{\varepsilon}} (-\Delta
{v}) \overline{v}\\
& = &\hspace{-1em} \int_{{\mathcal O}_R\cap B_{\varepsilon}} (-\Delta
{v})\overline{v}+\int_{{\mathcal O}_R\setminus B_{\varepsilon}} \nabla
{v}\nabla \overline{v} -\int_{\partial B_{\varepsilon}\cap {\mathcal O}_R}
\overline{v}\nabla{v}\cdot \nu .
\end{eqnarray*}
Subtracting, we obtain
\[
\int_{{\mathcal O}_R} \left(\frac{f(\overline{v})}{\overline{v}}-
\frac{f({v})}{{v}}\right)\overline{v}{v}=\int_{{\mathcal O}_R\cap
  B_{\varepsilon}} \Big( (\Delta
{v})\overline{v}-(\Delta \overline{v}){v}\Big)-
\]
\[-\int_{\partial B_{\varepsilon}\cap {\mathcal O}_R}
\Big({v}\nabla \overline{v}\cdot\nu-\overline{v}\nabla{v}\cdot \nu \Big)  .
\]

We claim that every positive solution $v$ of \eqref{dirunique} belongs to
$C^2(\overline{{\mathcal O}_R})$, that is, is $C^2$ up to the
boundary. This is proved using the ideas of the proof in \cite{CT} of Theorem
\ref{exis} above. Namely, we first do odd reflection of ${v}$ with respect to
${\mathcal C}$ to obtain a solution in $B_R\setminus\{0\}$. Then, by a
standard capacity argument we see that ${v}$ is a solution in fact in all
$B_R$. Then, classical elliptic theory gives ${v}\in
C^{2,\alpha}(\overline{B_R})$. 

Thus, we can let $\varepsilon$ tend to zero above and obtain
\begin{equation}\label{uni}
\int_{{\mathcal O}_R} \left(\frac{f(\overline{v})}{\overline{v}}-
\frac{f({v})}{{v}}\right)\overline{v}{v}=0.
\end{equation}
Since $0<{v}\leq \overline{v}<M$, Remark \ref{f'dec} leads to
$$\frac{f(\overline{v})}{\overline{v}} - \frac{f(v)}{v}\leq 
0 \quad\text{ in } {\mathcal O}_R.$$
Thus, the integrand in \eqref{uni} is nonpositive. Its integral
being zero leads to such integrand being identically zero, and thus
$v\equiv \overline{v}$.

We have proved that, for $R$ large enough, there exists a unique
positive solution of \eqref{dirunique}, that we denote by
$\usub_R$ as in the statement of the lemma.
This solution agrees with the maximal solution $\overline{v}$ obtained
by monotone iteration at the beginning of the proof.
Thus, the solution depends only on $s$ and $t$. In addition, its semi-stability
in ${\mathcal O}_R$ follows from Remark~\ref{stability}.

It only remains to prove the statement on minimality. For this, let $u$
be positive and satisfy $-\Delta u =f(u)$ in ${\mathcal O}_R$.
Choose $\varepsilon >0$ small enough such that the subsolution
of Remark~\ref{subsol} is smaller than $\min(u,M)$ (a supersolution) 
in all of ${\mathcal O}_R$.
Then, in between this subsolution and supersolution there is a solution of
\eqref{dirunique} which, by uniqueness, must coincide with
$\usub_R$. Thus $\usub_R \leq \min(u,M)\leq u$ in ${\mathcal O}_R$ as claimed.
\end{proof}

\begin{proposition}\label{min}
Let $f$ satisfy conditions \eqref{H}. Then, there exists a 
positive solution $\usub$ of 
\begin{equation}\label{dirmin}
\left\{
\begin{array}{rcll}
-\Delta \usub  & = & f(\usub) &
  \text{ in } {\mathcal O}\\
\usub & = & 0 &  \text{ on }\; {\mathcal C}= \partial {\mathcal O},
\end{array}
\right.
\end{equation}
which is minimal in the following sense.
We have that $\usub\leq u$ in ${\mathcal O}$
for every bounded solution $u$ of $-\Delta u=f(u)$ 
in $\R^{2m}$ that vanishes on the Simons cone and
has the same sign as $s-t$.

In addition, $\usub$ depends only on $s$
and $t$.

\end{proposition}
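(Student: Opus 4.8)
The plan is to obtain $\usub$ as a monotone limit of the minimal solutions $\usub_R$ of Lemma~\ref{minR} as $R\to\infty$. First I would show that the family $\{\usub_R\}$ is nondecreasing in $R$: given $R_1<R_2$, the restriction of $\usub_{R_2}$ to ${\mathcal O}_{R_1}$ is a positive solution of $-\Delta u=f(u)$ in ${\mathcal O}_{R_1}$ (with values in $(0,M)$, by Lemma~\ref{minR} applied in $B_{R_2}$), so the minimality statement of Lemma~\ref{minR} in ${\mathcal O}_{R_1}$ gives $\usub_{R_1}\le \usup_{R_2}|_{{\mathcal O}_{R_1}}$. Actually one must be a little careful: Lemma~\ref{minR} provides $\usub_R$ only for $R$ large, so I would fix $R_0$ large enough for the lemma to apply and work with $R\ge R_0$. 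Since $0<\usub_R<M$ uniformly, the monotone limit $\usub:=\lim_{R\to\infty}\usub_R$ exists pointwise in ${\mathcal O}$, with $0\le\usub\le M$.

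Next I would upgrade this to a genuine classical solution in ${\mathcal O}$ vanishing on ${\mathcal C}$. On any ball $B$ with $\overline{B}\subset{\mathcal O}$ we have uniform $L^\infty$ bounds, hence (by interior elliptic estimates for $-\Delta u=f(u)$ with $f\in C^1$) uniform $C^{2,\alpha}_{loc}$ bounds; passing to a subsequence and using the monotonicity to identify the limit, $\usub$ solves $-\Delta\usub=f(\usub)$ in ${\mathcal O}$. For the boundary behavior on ${\mathcal C}$ I would use the pointwise barrier $\usub_R\le\usub_{R+1}\le\cdots\le u_0(z)$ — this holds because $u_0(z)$ is a supersolution in ${\mathcal O}_R$ (Proposition~\ref{prop}) dominating $\usub_R$ on $\partial{\mathcal O}_R$, or more directly because the saddle solution from Theorem~\ref{exis} itself satisfies $u\le u_0(z)$ and, by minimality in Lemma~\ref{minR}, $\usub_R\le u$; hence $\usub\le u_0(z)$, which forces $\usub=0$ on ${\mathcal C}$ since $u_0(0)=0$ and $z\to 0$ there. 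Then $\usub$ extends continuously to $\overline{{\mathcal O}}$ with $\usub=0$ on ${\mathcal C}=\partial{\mathcal O}$. Strict positivity $\usub>0$ in ${\mathcal O}$ follows because $\usub\ge\varepsilon\phi_1>0$ on the fixed ball $B$ of Remark~\ref{subsol} (this subsolution sits below every $\usub_R$ for $R\ge R_0$ by the argument in the proof of Lemma~\ref{minR}), so $\usub\not\equiv 0$, and then $f(0)=0$ together with the strong maximum principle gives $\usub>0$ throughout ${\mathcal O}$. Also $\usub<M$ strictly: otherwise $\usub\equiv M$ somewhere, contradicting $\usub\le u_0(z)<M$ on the portion of ${\mathcal O}$ where $z$ is bounded, and the strong maximum principle elsewhere.

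Finally I would prove the minimality/global-comparison statement. Let $u$ be any bounded solution of $-\Delta u=f(u)$ in $\R^{2m}$ vanishing on ${\mathcal C}$ with the sign of $s-t$; in particular $u>0$ in ${\mathcal O}$ and, by Proposition~\ref{prop}, $u\le u_0(z)\le M$, so $0<u<M$ in ${\mathcal O}$ (with $u<M$ strict by the strong maximum principle, as above). Then for each $R\ge R_0$, $u|_{{\mathcal O}_R}$ is a positive solution of $-\Delta u=f(u)$ in ${\mathcal O}_R$, so the minimality in Lemma~\ref{minR} yields $\usub_R\le u$ in ${\mathcal O}_R$. Letting $R\to\infty$ gives $\usub\le u$ in ${\mathcal O}$, as claimed. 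The dependence of $\usub$ on $s$ and $t$ only is inherited from each $\usub_R$ (Lemma~\ref{minR}) under pointwise limits.

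The main obstacle is the boundary regularity and the correct handling of the singular point $0\in\partial{\mathcal O}$: ${\mathcal O}_R$ is not a Lipschitz domain at the origin, so one cannot naively invoke boundary Schauder estimates on all of $\partial{\mathcal O}$. I expect to circumvent this exactly as in the proof of Lemma~\ref{minR} (and of Theorem~\ref{exis} in \cite{CT}): perform the odd reflection of $u$ across ${\mathcal C}$ to obtain a solution in a punctured ball, remove the point singularity by a capacity argument, and then apply interior $C^{2,\alpha}$ theory — this gives $C^{2,\alpha}$ regularity up to ${\mathcal C}$ (away from, and including through, the origin) and is what makes the comparison $\usub\le u_0(z)$ and the identification $\usub=0$ on ${\mathcal C}$ rigorous. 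Everything else is a routine monotone-limit plus elliptic-estimates argument.
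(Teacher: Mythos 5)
Your proof is correct and follows essentially the same strategy as the paper: use the minimality property of $\usub_R$ from Lemma~\ref{minR} to show the sequence is nondecreasing in $R$, pass to the limit, and inherit the global minimality directly from that of each $\usub_R$. The paper states this in just a few lines, while you have supplied (correctly) the supporting details — the barrier $\usub_R\le u_0(z)$ giving the boundary condition, the subsolution of Remark~\ref{subsol} guaranteeing $\usub\not\equiv0$, and the reflection/capacity regularity argument — that the paper leaves implicit.
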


\begin{proof}
Note that, by minimality, the solutions $\usub_R$ of the previous lemma
form an increasing sequence in
$R$. Thus, letting $R\to\infty$ we obtain a positive solution $\usub$,
which depends only on $s$
and $t$. Finally, the statement on minimality of $\usub$ follows
immediately from the one for $\usub_R$ in Lemma~\ref{minR}.
\end{proof}

\subsection{Monotonicity properties}\label{susct342}

We now prove monotonicity properties for the maximal
solution $\usup$ of Proposition~\ref{max}.

\begin{lemma}\label{maxRmont}
The maximal solution $\usup_R$ of Lemma \ref{maxR} satisfies  
$\partial_t \usup_R \leq 0$ in~$T_R$.
\end{lemma}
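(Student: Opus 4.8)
The plan is to prove that $\partial_t \usup_R \leq 0$ in $T_R$ by showing that the entire monotone iteration scheme defining $\usup_R$ preserves the property of being nonincreasing in $t$. Recall that $\usup_R = \lim_{k\to\infty} \usup_{R,k}$ where $\usup_{R,0}(x) = u_0(z)$, $z = (s-t)/\sqrt{2}$, and the $\usup_{R,k+1}$ solve the linear Dirichlet problems \eqref{exisusup} with the maximum-principle-admissible operator $-\Delta - f'(M)$. First I would check the base case: $\usup_{R,0} = u_0(z)$ with $\dot u_0 > 0$ and $\partial_t z = -1/\sqrt{2} < 0$, so $\partial_t \usup_{R,0} = -\dot u_0(z)/\sqrt{2} \leq 0$ in $T_R$. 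Then, arguing by induction, I would assume $\partial_t \usup_{R,k} \leq 0$ in $T_R$ and deduce the same for $\usup_{R,k+1}$.

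The key step is the inductive one, and here I expect the main obstacle. Since $\usup_{R,k}$ depends only on $s$ and $t$, I would work in the $(s,t)$-variables, where the equation for $w := \usup_{R,k+1}$ reads $-(w_{ss}+w_{tt}) - (m-1)(w_s/s + w_t/t) - f'(M) w = g(\usup_{R,k})$ in the planar region $\{0 \leq t < s < R\}$. Differentiating this equation in $t$ is delicate because of the singular coefficient $(m-1)/t$ of $w_t$: one obtains an equation for $v := \partial_t \usup_{R,k+1}$ of the form
\[
-(v_{ss}+v_{tt}) - (m-1)\Big(\frac{v_s}{s} + \frac{v_t}{t}\Big) + \frac{(m-1)}{t^2} v - f'(M) v = \partial_t\big(g(\usup_{R,k})\big) = g'(\usup_{R,k})\,\partial_t \usup_{R,k} \leq 0,
\]
using that $g' > 0$ on $(0,M)$ (Remark~\ref{f'dec}) and the induction hypothesis $\partial_t \usup_{R,k} \leq 0$. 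The zeroth-order coefficient $(m-1)/t^2 - f'(M)$ is positive for $t$ small and in any case one adds a large constant so that the operator governing $v$ satisfies the maximum principle; the extra good-signed term $(m-1)v/t^2$ only helps. I would then need to control $v = \partial_t \usup_{R,k+1}$ on the boundary of $T_R$: on $\{t = 0\}$ the function $\usup_{R,k+1}$ is a $C^1$ radial function of the last $m$ variables so $\partial_t \usup_{R,k+1} = 0$ there; on $\{s = R\}$ and on $\{s = t\}$ the boundary data is $u_0(z)$, whose $t$-derivative $-\dot u_0(z)/\sqrt{2}$ is $\leq 0$. Hence $v \leq 0$ on $\partial T_R$ and, by the maximum principle applied to the equation above (after the standard shift making the zeroth order term nonnegative), $v = \partial_t \usup_{R,k+1} \leq 0$ throughout $T_R$.

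The hard part will be making the differentiation in $t$ across $\{t=0\}$ rigorous, since the coefficient $(m-1)/t$ is singular there; the clean way is to either work on $T_R \cap \{t > \delta\}$ and let $\delta \to 0$, using that on $\{t=\delta\}$ one has $\partial_t \usup_{R,k+1}(s,\delta) \to 0$ as $\delta \to 0$ by the $C^1$ regularity of the radial profile, or alternatively to avoid differentiation altogether and instead compare $\usup_{R,k+1}(s,t)$ with $\usup_{R,k+1}(s,t')$ for $t < t'$ directly by the maximum principle, exploiting that the region $\{t<s<R\}$ and the data $u_0(z)$ are "monotone" under decreasing $t$. Once $\partial_t \usup_{R,k} \leq 0$ is established for all $k$, passing to the limit $k \to \infty$ (with $C^1_{loc}$ convergence from elliptic estimates, as already used in Lemma~\ref{maxR}) yields $\partial_t \usup_R \leq 0$ in $T_R$, completing the proof.
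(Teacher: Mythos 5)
Your proposal is correct and coincides, in essence, with an \emph{alternative} argument that the paper itself sketches in the last paragraph of its proof of Lemma~\ref{maxRmont}: differentiate the iteration equation \eqref{exisusup} in $t$, obtain for $v_{k+1}:=\partial_t\usup_{R,k+1}$
$$
\Bigl\{ -\Delta -f'(M) +\frac{m-1}{t^2} \Bigr\} v_{k+1}= g'(\usup_{R,k})\, \partial_t\usup_{R,k}\qquad\text{in }T_R\cap\{t>0\},
$$
observe that the right-hand side is $\leq 0$ by the inductive hypothesis and $g'\geq 0$ on $[0,M]$ (Remark~\ref{f'dec}), and conclude $v_{k+1}\leq 0$ by the maximum principle. (Incidentally, your coefficient $(m-1)/t^{2}$ is the correct one; the paper's display for this alternative argument has a typo, writing $(m-1)/t$.) However, the paper's \emph{primary} argument is different and bypasses the iteration entirely: it uses the semi-stability of $\usup_R$ established in Lemma~\ref{maxR}. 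One first checks $\partial_t\usup_R\leq 0$ on $\partial T_R$, so that $(\partial_t\usup_R)^+\in H^1_0(T_R)$ is an admissible test function in the quadratic form $Q_{\usup_R}$. Differentiating the PDE in $t$ gives \eqref{349}, and multiplying by $(\partial_t\usup_R)^+$ and integrating by parts yields
$$
0\leq Q_{\usup_R}\bigl((\partial_t\usup_R)^+\bigr) = -\int_{T_R}\frac{m-1}{t^2}\bigl((\partial_t\usup_R)^+\bigr)^2\,dx\leq 0,
$$
forcing $(\partial_t\usup_R)^+\equiv 0$. This test-function route is slicker: it needs no induction, no maximum principle for an operator with coefficients blowing up at $\{t=0\}$, and the singularity $1/t^2$ is harmless because $\{t=0\}$ has zero Lebesgue measure in the integral.

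One imprecision in your boundary step: on $\{s=R\}$ the direction $\partial_t$ is tangential, so $\partial_t\usup_{R,k+1}$ there equals $\partial_t u_0(z)=-\dot u_0(z)/\sqrt2<0$, as you say. But on the Simons cone $\{s=t\}$, $\partial_t$ has a normal component (in the $\partial_z$ direction), and the Dirichlet datum $u_0(z)\equiv 0$ does not by itself determine $\partial_t\usup_{R,k+1}$ there. The correct reason, and the one the paper gives, is that $\usup_{R,k+1}\geq 0$ in $T_R$ (from the monotone iteration) and vanishes on $\{s=t\}$, so its inward normal derivative $\partial_z\usup_{R,k+1}$ is $\geq 0$; since the tangential derivative $\partial_y\usup_{R,k+1}$ vanishes on the cone, $\partial_t\usup_{R,k+1}=(\partial_y-\partial_z)\usup_{R,k+1}/\sqrt2\leq 0$ there.
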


\begin{proof}
We will see that we can prove the monotonicity property with two different methods,
one based on the semi-stability of $\usup_R$, and the other
on the maximality of  $\usup_R$ and thus the monotone iteration
method to construct it.
In any case, we first must check the right monotonicity on 
the boundary $\partial T_R$.

Since $\usup_R\geq 0$ in $T_R$ and vanishes on $\{s=t\leq R\}$, we have that
$\partial_t\usup_R\leq 0$ on $\{s=t\leq R\}$. On the remaining part of 
$\partial T_R$, which is $\{t<s=R\}$, $\usup_R(x)=u_0(z)=u_0((s-t)/\sqrt{2})$. Thus,
 for $t<s=R$,
$$
\partial_t\usup_R(R,t)=-\frac{1}{\sqrt{2}}\dot{u}_0(z)<0.
$$
Therefore $\partial_t\usup_R\leq 0$ on $\partial T_R$.

It follows that the positive part $(\partial_t\usup_R)^+$ belongs to 
 $H^1_0 (T_R)$, and hence
it is an admissible test function for the quadratic form
$Q_{\usup_R}$ in $T_R$.
Recall that by Lemma \ref{maxR}, $\usup_R$ is semi-stable in $T_R$, that is,
$$
Q_{\usup_R}(\xi)=\int_{T_R}\left\{ |\nabla\xi|^2-f'(\usup_R)\xi^2\right\} dx
\geq 0,
$$ 
for all $\xi\in H_0^1(T_R)$.
Now, since $\usup_R$ is a solution, in coordinates
$s$ and $t$ we have
$$
-\partial_{ss}\usup_R-\partial_{tt}\usup_R- (m-1)\frac{\partial_s\usup_R}{s}
- (m-1)\frac{\partial_t\usup_R}{t}=f(\usup_R) \; \text{ in } T_R \cap \{t>0\}.
$$
Differentiating with respect to $t$ we get
\begin{equation}\label{349}
-\Delta \partial_t\usup_R -f'(\usup_R)\partial_t\usup_R =
-\frac{m-1}{t^2}\partial_t\usup_R\quad \text{ in } T_R\cap \{t>0\}.
\end{equation}
Therefore, multiplying \eqref{349} by $(\partial_t\usup_R)^+$
and integrating by parts, we have that
\begin{eqnarray*}
0 &\leq & Q_{\usup_R}((\partial_t\usup_R)^+)\\
& = & \int_{T_R}\left\{ |\nabla(\partial_t\usup_R)^+|^2
-f'(\usup_R)((\partial_t\usup_R)^+)^2\right\} dx\\
 & = & -\int_{T_R}\frac{m-1}{t^2}((\partial_t\usup_R)^+)^2dx \leq 0;
\end{eqnarray*}
note that the set $\{t=0\}$ is of zero measure.
Since the last integrand is nonnegative, its integral being zero leads to 
$(\partial_t\usup_R)^+\equiv 0$ in $T_R\cap\{t>0\}$ and thus in $T_R$.
This finishes the proof.

As mentioned in the beginning of the proof, $\partial_t\usup_R\leq 0$
can also be established using the maximality of the solution. Indeed,
by its maximality, $\usup_R$ must be equal to the solution constructed
by monotone iteration in \eqref{exisusup}. Assuming 
$\partial_t\usup_{R,k}\leq 0$ (which clearly holds for $k=0$),
we differentiate the equation in \eqref{exisusup} to obtain
$$
\left\{ -\Delta -f'(M) +\frac{m-1}{t} \right\}\partial_t \usup_{R,k+1} 
= g'(\usup_{R,k}) \partial_t \usup_{R,k}
\quad  \text{ in } T_R.
$$
The right hand side is nonpositive by inductive hypothesis.
Since the operator $-\Delta -f'(M) +(m-1)/t$ satisfies the
maximum principle (due to the positive signs of the zeroth order
coefficients), we deduce $\partial_t\usup_{R,k+1}\leq 0$.
\end{proof}

In a similar way we now establish a sign for $\partial_y\usup_R$
in $T_R$.

\begin{lemma}\label{maxmony}
The maximal solution $\usup_R$ of Lemma \ref{maxR} satisfies  
$\partial_y \usup_R \geq 0$ in $T_R$.
\end{lemma}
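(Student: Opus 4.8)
The plan is to mimic closely the proof of Lemma~\ref{maxRmont}, replacing the role of $\partial_t \usup_R$ by $\partial_y \usup_R$. The first task is to check the sign of $\partial_y \usup_R$ on $\partial T_R$. On the portion $\{s=t\le R\}$ of the boundary, i.e.\ on $\mathcal{C}\cap\overline{T_R}=\{z=0\}$, we have $\usup_R=0$ while $\usup_R>0$ in $T_R$; since $\partial_y$ is a direction tangent to $\mathcal C$ (it points along the cone $\{z=0\}$), $\usup_R$ vanishes identically along this boundary piece and hence $\partial_y\usup_R=0$ there. On the remaining part $\{t<s=R\}$ we have $\usup_R(x)=u_0(z)$ with $z=(s-t)/\sqrt 2$; since $y=(s+t)/\sqrt 2$ is independent of $z$ in these coordinates, $\partial_y u_0(z)=0$ on that face as well. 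Thus $\partial_y\usup_R = 0$ on all of $\partial T_R$, and in particular $(\partial_y\usup_R)^-\in H^1_0(T_R)$ is an admissible test function for the semi-stable quadratic form $Q_{\usup_R}$.

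Next I would derive the linearized equation for $v:=\partial_y\usup_R$. Starting from equation \eqref{eqyz} in the $(y,z)$ variables,
$$
u_{yy}+u_{zz}+\frac{2(m-1)}{y^2-z^2}(yu_y-zu_z)+f(u)=0,
$$
and differentiating with respect to $y$, one gets
$$
-\Delta v - f'(\usup_R) v = \partial_y\!\left(\frac{2(m-1)}{y^2-z^2}\right)(y(\usup_R)_y - z(\usup_R)_z) + \frac{2(m-1)}{y^2-z^2}\,v
$$
(plus the cross term coming from $\partial_y(y u_y)$, which I would organize carefully). The key point I expect is that, after using the already-established facts $(\usup_R)_y = v \ge$ the sign we want and $(\usup_R)_z = \usup_{R,z} > 0$ (from Lemma~\ref{maxRmont}-type monotonicity in $z$, or from $-\partial_t\usup_R\ge0$ and $\partial_s\usup_R\ge0$), the right-hand side has a definite sign; the structure should parallel \eqref{349}, where the zeroth-order correction term has the ``good'' sign. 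Then multiplying the linearized equation by $(\partial_y\usup_R)^-$, integrating by parts over $T_R$, and invoking semi-stability $Q_{\usup_R}((\partial_y\usup_R)^-)\ge 0$, I would obtain that a nonpositive integral equals a nonnegative quantity, forcing $(\partial_y\usup_R)^-\equiv 0$, hence $\partial_y\usup_R\ge0$ in $T_R$.

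Alternatively — and this may be cleaner, as in the second half of the proof of Lemma~\ref{maxRmont} — I would use the maximality of $\usup_R$ to identify it with the monotone iteration \eqref{exisusup} and argue inductively: assuming $\partial_y\usup_{R,k}\ge0$, differentiate the linear equation $\{-\Delta-f'(M)\}\usup_{R,k+1}=g(\usup_{R,k})$ with respect to $y$, obtaining an equation for $\partial_y\usup_{R,k+1}$ whose right-hand side is $g'(\usup_{R,k})\partial_y\usup_{R,k}\ge0$ (since $g'\ge0$ on $(0,M)$ by Remark~\ref{f'dec}) plus first-order terms arising from the $y$-dependence of the coefficients $2(m-1)y/(y^2-z^2)$; I would check that these extra first-order terms preserve the maximum principle (their zeroth-order part, if any, should have the right sign) so that $\partial_y\usup_{R,k+1}\ge0$ follows, and pass to the limit in $k$. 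The main obstacle in either route is bookkeeping with the singular coefficients in \eqref{eqyz}: I must verify that, when I differentiate in $y$, the resulting zeroth-order coefficient in the linearized operator comes out with the sign that makes the maximum-principle / stability argument close, exactly as the $-(m-1)/t^2$ term did in \eqref{349}. I expect this to work because $\partial_y$ is, geometrically, the ``good'' direction (tangent to the cone, pointing outward along it), mirroring why $-\partial_t$ was the good direction there.
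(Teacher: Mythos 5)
Your overall strategy is the same as the paper's (check the sign of $\partial_y\usup_R$ on $\partial T_R$, then test the semi-stable form $Q_{\usup_R}$ with $(\partial_y\usup_R)^-$ using the linearized equation), but there are two genuine gaps.

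\textbf{The boundary check on $\{t<s=R\}$ is wrong.} You claim $\partial_y\usup_R = 0$ on this face because $\usup_R = u_0(z)$ there and $u_0(z)$ does not depend on $y$. But $\partial_y = (\partial_s + \partial_t)/\sqrt{2}$ is \emph{not} tangent to $\{s=R\}$: it has a normal component $\partial_s/\sqrt{2}$, and the Dirichlet data on $\{s=R\}$ only determines the tangential derivative $\partial_t\usup_R$ there, not $\partial_s\usup_R$. In fact one generically has $\partial_y\usup_R>0$ on $\{s=R\}$ by Hopf's lemma applied to $u_0(z)-\usup_R\ge 0$. What the paper actually does is use the pointwise comparison $\usup_R\le u_0(z)$ valid in all of $\overline{T_R}$ (Proposition~\ref{prop} plus maximality): for $0<\delta<t$, $\usup_R(R-\delta,t-\delta)\le u_0((R-t)/\sqrt 2)=\usup_R(R,t)$, and differentiating at $\delta=0$ yields $\partial_y\usup_R(R,t)\ge 0$. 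Your argument, as written, establishes nothing on that face.

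\textbf{The interior step is missing its key input.} You gesture at ``already-established facts $(\usup_R)_y\ge$ the sign we want'' (circular) and ``$(\usup_R)_z>0$ from $-\partial_t\usup_R\ge0$ and $\partial_s\usup_R\ge0$'' (but $\partial_s\usup_R\ge0$ is not yet available at this stage). The essential structural point is different: since $\partial_y=(\partial_s+\partial_t)/\sqrt 2$, one adds \eqref{349} to its $s$-analogue and obtains, after rewriting $\partial_s/s^2+\partial_t/t^2$ in terms of $\partial_y$ and $\partial_t$,
\[
-\Delta(-\partial_y\usup_R)-f'(\usup_R)(-\partial_y\usup_R)
= -\frac{m-1}{s^2}(-\partial_y\usup_R)
 + \frac{(m-1)(s^2-t^2)}{\sqrt{2}\,s^2 t^2}\,\partial_t\usup_R
\quad\text{in }T_R\cap\{t>0\}.
\]
The first term on the right contributes the ``good'' zeroth-order sign, exactly as $-\frac{m-1}{t^2}$ did in \eqref{349}; but the second term is a \emph{cross term} whose sign is controlled precisely by the previously established Lemma~\ref{maxRmont}, namely $\partial_t\usup_R\le 0$ (and $s^2-t^2>0$ in $T_R$). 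This is the ingredient your sketch does not identify, and without it the semi-stability argument does not close. A similar issue appears in your proposed alternative (differentiating the monotone iteration by $y$): the commutator $[\partial_y,-\Delta]$ in $(s,t)$ coordinates produces the same cross term in $\partial_t\usup_{R,k+1}$, so that route also needs the $t$-monotonicity of the iterates established first.
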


\begin{proof}
We first check that
$\partial_y\usup_R\geq 0$ on $\partial T_R$. To see this, simply note that
$\usup_R\equiv 0$ on the part of the boundary in the Simons cone, $\{t=s<R\}$. Since
$\partial_y$ is a tangential derivative here, we have
$\partial_y\usup_R= 0$ in $\{t=s<R\}$. Take now a point $(s=R,t)$ with $0<t<R$
on the remaining part of
the boundary. Recall that $\usup_R\leq u_0(z)$ in all $T_R$. 
Thus, for all $0<\delta < t$, we have $\usup_R (R-\delta, t-\delta) \leq 
u_0((R-\delta-(t-\delta))/\sqrt{2})=u_0((R-t)/\sqrt{2})=\usup_R (R, t)$.
Differentiating with respect to $\delta$ at $\delta=0$, we deduce
$\partial_y\usup_R (R,t)\geq 0$.

Thus $\partial_y\usup_R\geq 0$ on $\partial T_R$
and hence we can take
$(\partial_y\usup_R)^-$ as a test function in $Q_{\usup_R}$.

As in \eqref{349}, we now use the analogue equation for $\partial_s\usup_R$:
\begin{equation*}
-\Delta \partial_s\usup_R -f'(\usup_R)\partial_s\usup_R =
-\frac{m-1}{s^2}\partial_s\usup_R\quad \text{ in } T_R.
\end{equation*}
From this and \eqref{349}, since $\partial_y=(\partial_s+\partial_t)/\sqrt{2}$, we obtain
\begin{eqnarray*}
-\Delta\partial_y\usup_R-f'(\usup_R)\partial_y\usup_R & =
 & -\frac{m-1}{\sqrt{2}}\left(\frac{\partial_s\usup_R}{s^2}+
\frac{\partial_t\usup_R}{t^2}\right)\\
&  & \hspace{-18mm} =\, -\frac{m-1}{s^2}\partial_y\usup_R - 
\frac{(m-1)(s^2-t^2)}{\sqrt{2}s^2t^2}\partial_t\usup_R
\end{eqnarray*}
in $T_R\cap\{t>0\}$. Thus,
\begin{equation}\label{350}
-\Delta (-\partial_y\usup_R)-f'(\usup_R)(-\partial_y\usup_R)  =
-\frac{m-1}{s^2}(-\partial_y\usup_R) + 
\frac{(m-1)(s^2-t^2)}{\sqrt{2}s^2t^2}\partial_t\usup_R
\end{equation}
in $T_R\cap\{t>0\}$.

Multiplying \eqref{350} by $(\partial_y\usup_R)^-$
and integrating by parts, we have that
\begin{eqnarray*}
0 & \leq & Q_{\usup_R}((\partial_y\usup_R)^-)\\
&=& \int_{T_R}\left\{ |\nabla(\partial_y\usup_R)^-|^2
-f'(\usup_R)((\partial_t\usup_R)^-)^2\right\} dx\\
 & = & -\int_{T_R}\frac{m-1}{s^2}((\partial_y\usup_R)^-)^2dx
+ \\
&  & \hspace{4em} 
+\int_{T_R}\frac{(m-1)(s^2-t^2)}{\sqrt{2}s^2t^2}
\partial_t\usup_R (\partial_y\usup_R)^- dx \leq 0,
\end{eqnarray*}
since $\partial_t\usup_R\leq 0$ in $T_R$ by Lemma \ref{maxRmont}.
Therefore $(\partial_y\usup_R)^-\equiv 0$ in $T_R$, which
finishes the proof.
\end{proof}

Finally we prove Theorem \ref{teominmax}.

\begin{proof}[Proof of Theorem \ref{teominmax}]
By Propositions \ref{min} and \ref{max} we know that there exist a
positive minimal solution $\usub$ and a positive maximal solution $\usup$ in 
${\mathcal  O}$ in the sense stated in the propositions.
In addition, they depend only on $s$ and $t$. 

Now, since $f$ is odd, by odd reflection with respect to ${\mathcal C}$ we
obtain saddle solutions $\usub$ and $\usup$ in
$\R^{2m}$ such that
$$
|\usub|\leq |u|\leq|\usup| \quad {\rm in}\, \R^{2m},
$$ 
for every solution $u$ 
of $-\Delta u=f(u)$ in $\R^{2m}$ that vanishes on the Simons cone
and has the same sign as $s-t$.

It only remains to prove the monotonicity properties stated in the theorem.
First, since $\usup$ is the limit of $\usup_R$ as $R\to\infty$,
Lemmas~\ref{maxRmont} and \ref{maxmony} give 
that $-\partial_t \usup\geq 0$ and $\partial_y \usup\geq 0$ in $\{s>t\}$.
As a consequence, $\partial_s \usup\geq 0$ in $\{s>t\}$.

It follows, since $u(s,t)=-u(t,s)$, that $-\partial_t \usup\geq 0$
in all of $\R^{2m}$. Our equation reads
\begin{equation}\label{351}
-\partial_{ss}\usup-\partial_{tt}\usup- (m-1)\frac{\partial_s\usup}{s}
- (m-1)\frac{\partial_t\usup}{t}=f(\usup) \; \text{ in } T_R \cap \{t>0\}
\end{equation}
and, differentiating it with respect to $t$, we also have
\begin{equation}\label{352}
-\Delta \partial_t\usup -f'(\usup)\partial_t\usup =
-\frac{m-1}{t^2}\partial_t\usup\quad \text{ in } T_R\cap \{t>0\}.
\end{equation}
Since $-\partial_t \usup\geq 0$ in $\R^{2m}$, \eqref{352} and the
strong maximum principle give that $-\partial_t \usup > 0$ in $\R^{2m}
\setminus\{t=0\}$. On the other hand, multiplying \eqref{351}
by $t$, using that every saddle solution is of class $C^2$,
and letting $t\to 0$, we deduce $-\partial_t \usup =0$ in $\{t=0\}$.
Statement (a) of the theorem is now proved.

Part (b) is proved in the same way ---or it is simply the symmetric result to (a).
Statement (c) follows directly from (a) and (b).

Finally, we prove (d). 
Equation \eqref{350}, after letting $R\to\infty$, gives
\begin{eqnarray*}
-\Delta\partial_y\usup-f'(\usup)\partial_y\usup & =
 & -\frac{m-1}{s^2}\partial_y\usup - 
\frac{(m-1)(s^2-t^2)}{\sqrt{2}s^2t^2}\partial_t\usup\\
& \geq &  -\frac{m-1}{s^2}\partial_y\usup
\end{eqnarray*}
in $\{s>t>0\}$, since $ \partial_t\usup\leq 0$ in this set.
We have already proved (using Lemma~\ref{maxmony}) that 
$\partial_y \usup\geq 0$ in $\{s>t\}$. Hence, the strong maximum principle
leads to $\partial_y \usup > 0$ in $\{s>t\}$, as claimed.
\end{proof}

Finally we prove Proposition \ref{cstable}, which states that every
solution that vanishes on the Simons
cone ${\mathcal C}$ and has the same sign as $s-t$ is stable in ${\mathcal O}$.

\begin{proof}[Proof of Proposition \ref{cstable}]
Let $u$ be a bounded solution  
of $-\Delta u=f(u)$ in $\R^{2m}$ that vanishes on the Simons cone
${\mathcal C}=\{s=t\}$ and has the same sign as $s-t$.
Then, by Proposition~\ref{prop}, we have that $0<u<M$ in ${\mathcal O}=\{ s>t\}$. 
Thus, by Remark~\ref{stability}, 
$Q_u(\xi) \geq 0$ for all
$\xi\in H^1(\R^{2m})$ with compact support and such that $\xi \equiv 0$ in
$\R^{2m}\setminus \{ s>t\}$, where $Q_u$ is defined in \eqref{stable}.

By the analogue (or symmetric) argument now in $\{ s<t\}$ (instead of
$\{ s>t\}$), and since $-M<u<0$ in $\{ s<t\}$ by Proposition~\ref{prop},
$Q_u(\xi) \geq 0$ for all
$\xi\in H^1(\R^{2m})$ with compact support and such that $\xi \equiv 0$ in
$\R^{2m}\setminus \{ s<t\}$.

Now, given
$\xi\in C^1(\R^{2m})$ with compact support and with $\xi \equiv 0$ in
${\mathcal C}$, we write $\xi= \chi_{\{s>t\}}\xi + \chi_{\{s<t\}}\xi$,
the sum of two $H^1$ functions, and we use the previous facts to conclude
$Q_u(\xi) \geq 0$.
\end{proof}

\section{Asymptotic behavior of saddle solutions in $\R^{2m}$}\label{sctasym}

This section is devoted to study the asymptotic behavior at infinity of
saddle-shaped solutions to
$-\Delta u=f(u)$ in $\R^{2m}$, and more generally, of solutions
(not necessarily depending only on $s$ and $t$) which are
odd with respect to the Simons cone ${\mathcal C}$ and positive in 
${\mathcal O}=\{s>t\}$.

We will work in the $(y,z)$ system of coordinates. Recall that we
defined, in ($\ref{defyz}$), $y$ and $z$ by
\begin{center}
$
\left\{ \begin{array}{rcl}
y&=&{\ds (s+t)/\sqrt{2}}\\
z&=&{\ds (s-t)/\sqrt{2}},\\
\end{array}
\right.
$
\end{center}
which satisfy $y\geq 0$ and $-y\leq z \leq y$.

We prove Theorem~\ref{asym}, which states that any solution $u$ as above 
tends at infinity to the function
$$
U(x):=u_0(z),
$$
uniformly outside of compact sets. Similarly the gradient of $u$,
$\nabla u$, converges to $\nabla U$.  This fact will be
important in the proof of instability of saddle solutions in dimension~$6$.

Our proof of Theorem~\ref{asym}, which argues by contradiction, uses a well 
known compactness argument based on translations of the solution, as well as 
two crucial classification or Liouville type results for monostable equations.
Regarding the nonlinearity, both assume that 
$g:[0,+\infty)\rightarrow\R$ is a $C^1$ function such that
\begin{equation}\label{condg}
g(0)=g(1)=0,\ g'(0)>0,\ g>0 \text{ in } (0,1),\ \text{ and }\ g<0 \text{ in } 
(1,+\infty). 
\end{equation}

The first result concerns global solutions, that is solutions in all space,
and is originally due to Aronson and Weinberger~\cite{AW}; the statement that
we present, a simpler proof, and much more general
results are due to Berestycki, Hamel, and Nadirashvili, see 
Proposition 1.14 of \cite{BHN} (see also \cite{BHR} for more general results).

\begin{proposition} $(${\bf Aronson-Weinberger} \cite{AW};
{\bf Berestycki-Hamel-Nad-\-irashvili \cite{BHN}$)$.} \label{BHN}
Let $g$ satisfy \eqref{condg} 
and let $b\in\R^n$ be such that $|b|<2\sqrt{g'(0)}$. Let $u$ be a
bounded solution of
\begin{equation*}
\left\{ \begin{array}{rl}
        \Delta u+b\cdot\nabla u+g(u)=0 & \textrm{ in } \R^n\\
                               u\geq 0 & \textrm{ in } \R^n.
         \end{array}\right.
\end{equation*}
Then, $u\equiv 0$ or $u\equiv 1$.
\end{proposition}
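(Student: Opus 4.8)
The plan is to first show that $0\le u\le 1$, then to run a dichotomy on $\mu:=\inf_{\R^n}u$ using the maximum principle at an extremum, and finally to invoke the drift hypothesis $|b|<2\sqrt{g'(0)}$ through a principal eigenvalue estimate on large balls in order to rule out $\mu=0$. The elementary device used throughout is the standard translation--compactness argument: for any sequence $x_k\in\R^n$, the translates $u_k:=u(\,\cdot\,+x_k)$ solve the same equation and are uniformly bounded, so by interior elliptic estimates a subsequence converges in $C^2_{\mathrm{loc}}(\R^n)$ to a solution $u_\infty$ of $\Delta u_\infty+b\cdot\nabla u_\infty+g(u_\infty)=0$ with $0\le u_\infty\le\sup_{\R^n}u$.

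First I would prove $u\le 1$. Put $\theta:=\sup_{\R^n}u$; if $\theta=0$ then $u\equiv 0$ and we are done, so assume $\theta>0$, pick $x_k$ with $u(x_k)\to\theta$, and pass to a limit $u_\infty$ as above. Then $u_\infty(0)=\theta=\max_{\R^n}u_\infty$, hence $\nabla u_\infty(0)=0$ and $\Delta u_\infty(0)\le 0$, so the equation gives $g(\theta)=-\Delta u_\infty(0)\ge 0$; since $g<0$ on $(1,+\infty)$, this forces $\theta\le 1$. Next, assuming $u\not\equiv 0$, writing $g(u)=c(x)u$ with $c$ bounded and applying the strong maximum principle gives $u>0$ in $\R^n$. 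Let $\mu:=\inf_{\R^n}u\in[0,1]$; choosing $x_k$ with $u(x_k)\to\mu$ and passing to a limit $u_\infty$, we get $u_\infty(0)=\mu=\min_{\R^n}u_\infty$, so $\nabla u_\infty(0)=0$, $\Delta u_\infty(0)\ge 0$, and $g(\mu)=-\Delta u_\infty(0)\le 0$. Since $g>0$ on $(0,1)$, this excludes $\mu\in(0,1)$, so $\mu\in\{0,1\}$; if $\mu=1$ then $u\equiv 1$ and we are done. It thus remains to exclude the possibility $\mu=0$.

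This last step is where $|b|<2\sqrt{g'(0)}$ enters. The substitution $w=e^{-\frac12 b\cdot x}\psi$ conjugates the operator $-\Delta-b\cdot\nabla$ to $-\Delta+|b|^2/4$, so the first Dirichlet eigenvalue of $-\Delta-b\cdot\nabla$ on the ball $B_R$ equals $\lambda_1(-\Delta,B_R)+|b|^2/4$, which decreases to $|b|^2/4<g'(0)$ as $R\to\infty$. Fix $R$ so large that $\lambda_1(-\Delta-b\cdot\nabla-g'(0),B_R)<0$. If $\mu=0$, the construction above yields points $x_k$ with $u_k:=u(\,\cdot\,+x_k)\to 0$ uniformly on $\overline{B_R}$, and $u_k>0$ there. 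Each $u_k$ solves $-\Delta u_k-b\cdot\nabla u_k-c_k(x)u_k=0$ in $B_R$ with $c_k(x):=g(u_k(x))/u_k(x)\to g'(0)$ uniformly on $\overline{B_R}$, since $g(s)/s\to g'(0)$ as $s\to 0^+$. Being a positive solution, $u_k$ is in particular a positive supersolution, so its generalized principal eigenvalue satisfies $\lambda_1(-\Delta-b\cdot\nabla-c_k,B_R)\ge 0$; but by continuous dependence of the principal eigenvalue on the zeroth-order coefficient in the $L^\infty$ norm, $\lambda_1(-\Delta-b\cdot\nabla-c_k,B_R)\to\lambda_1(-\Delta-b\cdot\nabla-g'(0),B_R)<0$, a contradiction for $k$ large. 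Hence $\mu=1$ and $u\equiv 1$.

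The decisive point is this last step, and I expect the care to lie in making the eigenvalue estimate quantitative --- the gauge transform, the monotonicity in $R$, and the continuity of $\lambda_1$ under bounded perturbations of the potential --- and in the characterization that a positive (super)solution on $B_R$ forces its generalized principal eigenvalue to be nonnegative. A seemingly natural alternative --- building a compactly supported subsolution $\varepsilon\phi_R$ on a supercritical ball $B_R$ and letting it invade all of $\R^n$ --- runs into the obstruction that for a general KPP nonlinearity one need not have $g(ts)\ge t\,g(s)$ for $t\in[0,1]$, so dilations of a solution are not subsolutions and the usual sliding method fails; recasting everything through the principal eigenvalue avoids this difficulty entirely.
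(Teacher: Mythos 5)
Your proof is essentially correct, but note that the paper does not actually prove this proposition: it is quoted from Aronson--Weinberger and from Proposition~1.14 of Berestycki--Hamel--Nadirashvili, and the paper only remarks (just after Proposition~\ref{An}) that those proofs rest on Serrin's sweeping method. So there is no in-paper argument to compare against line by line; what can be said is that you have supplied a self-contained alternative.

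Your three preliminary steps (the $\sup\le 1$ argument, $u\not\equiv0\Rightarrow u>0$, and $\inf u\in\{0,1\}$ via translation--compactness at an extremum) are standard and correct. The decisive step -- excluding $\inf u=0$ -- is where you depart from the cited route. Instead of constructing a compactly supported bump subsolution $\varepsilon\phi_R$ and sweeping it under $u$, you translate to a region where $u$ is uniformly small, write the equation as a linear one with coefficient $c_k=g(u_k)/u_k\to g'(0)$ uniformly on $\overline{B_R}$, and contradict the sign of the generalized Dirichlet principal eigenvalue of $-\Delta-b\cdot\nabla-c_k$ on $B_R$: positivity of $u_k$ forces $\lambda_1\ge0$ (the Berestycki--Nirenberg--Varadhan characterization, which the paper itself invokes in Remark~\ref{stability}), while the gauge computation $\lambda_1(-\Delta-b\cdot\nabla,B_R)=\lambda_1(-\Delta,B_R)+|b|^2/4\to|b|^2/4<g'(0)$ and continuity in the zeroth-order coefficient force $\lambda_1<0$ for $k$ large. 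This is clean, places the hypothesis $|b|<2\sqrt{g'(0)}$ exactly where one expects it, and avoids the one-parameter comparison family altogether. The cited sweeping proof buys a more geometric picture and generalizes more readily to non-translation-invariant settings; your eigenvalue argument is arguably shorter here.

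One small inaccuracy you should strike: your closing remark that the bump-subsolution sweeping ``fails'' because one may not have $g(ts)\ge t\,g(s)$. That condition is not what sweeping uses. To make $\varepsilon\phi_R$ (extended by zero) a subsolution one only needs $g(s)/s\ge\lambda_1(-\Delta-b\cdot\nabla,B_R)$ on the range $s\in(0,\varepsilon\|\phi_R\|_\infty]$, which follows for $\varepsilon$ small from $g(s)/s\to g'(0)>\lambda_1$; no concavity-type inequality on $g$ is required, and the sweeping route does go through under hypotheses \eqref{condg}. So your alternative is valid but not forced -- it is a matter of taste, not of necessity.

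Finally, two implicit points you should make explicit if you write this up: (i) in the $\mu=0$ step you use the strong maximum principle to conclude that the translated limit $u_\infty$ is identically zero (and hence $u_k\to0$ uniformly on $\overline{B_R}$); and (ii) the identification of the generalized (supersolution-based) principal eigenvalue with the variational Dirichlet one on the smooth bounded domain $B_R$, together with its Lipschitz dependence $|\lambda_1(-L-c_1)-\lambda_1(-L-c_2)|\le\|c_1-c_2\|_{L^\infty}$, is exactly the content of \cite{BNV} and should be cited at that point.
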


The second result concerns the same equation but in a half-space.
It states the 1D symmetry of its solutions and is due to Angenent
\cite{An}. For Lipschitz nonlinearities and slightly weaker
assumptions on them, see Berestycki-Caffarelli-Nirenberg~\cite{BCNm}.

\begin{proposition}[{\bf Angenent} \cite{An}]\label{An}
Let $g$ satisfy \eqref{condg} and also $g'(1)<0$. 
Let $u$ be a bounded solution of
\begin{equation*}
\left\{ \begin{array}{rl}
        \Delta u+g(u)=0 & \textrm{ in } \R^n_+=\{x_n>0\}\\
                               u=0 & \textrm{ on } \partial\R^n_+\\
                               u > 0 & \textrm{ in } \R^n_+.
         \end{array}\right.
\end{equation*}
Then, $u$ depends only on the $x_n$ variable. In addition,
such solution depending only on $x_n$ exists and is unique.
\end{proposition}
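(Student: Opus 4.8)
The plan is to combine a phase--plane analysis of the associated one--dimensional problem with the sliding method, the hypothesis $g'(1)<0$ being the crucial ingredient that makes a maximum principle available in the unbounded regions where $u$ is close to~$1$.

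First I would establish the a priori bounds $0<u<1$ in $\R^n_+$. If $M:=\sup u>1$, choose $x_k$ with $u(x_k)\to M$; by interior elliptic estimates, up to translation and a subsequence $u(\cdot+x_k)$ converges to a bounded solution, either on all of $\R^n$ or on a half--space, attaining the value $M$ at an interior point, hence equal to $M$ there by the strong maximum principle. The half--space case is impossible since the limit would vanish on the boundary, and the entire case forces $g(M)=0$, contradicting $g<0$ on $(1,\infty)$. Thus $u\le 1$, and then $u<1$ since $1$ is itself a solution. Next I would settle the one--dimensional statement: with $G(s)=\int_0^s g$ one has $G(1)=\int_0^1 g>0$, and the first integral $\frac12(u_1')^2+G(u_1)=G(1)$, together with the fact that $g'(1)<0$ makes $(1,0)$ a hyperbolic saddle, singles out a unique orbit leaving $(0,\sqrt{2G(1)})$ and converging to $(1,0)$; this gives the increasing solution $u_1$ with $u_1(0)=0$, $u_1>0$, $u_1\to 1$, together with its uniqueness among bounded positive solutions on $(0,\infty)$ that vanish at the origin (any other choice of $u_1'(0)$ either overshoots past $1$ and leaves $[0,1]$, or turns back and reaches $0$ in finite time).

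The core is the monotonicity $\partial_{x_n}u>0$, which I would obtain by the sliding method: prove that $u^{\tau}(x',x_n):=u(x',x_n+\tau)$ satisfies $u^{\tau}\ge u$ in $\R^n_+$ for every $\tau>0$, then differentiate in $\tau$ and invoke the strong maximum principle. The inequality $u^{\tau}\ge u$ on $\{x_n=0\}$ is trivial. For the interior one splits $\R^n_+$ into three regions: a strip $\{0<x_n<a\}$ where $u$ is small and the narrow--domain maximum principle applies; the region $\{x_n>R\}$ where, once $u$ is known to be close to~$1$, the linearized zeroth--order coefficient is close to $g'(1)<0$ and a genuine maximum principle holds; and a bounded region in between handled by a continuity (moving--planes) sweep. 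In parallel one shows $u(x',x_n)\to 1$ as $x_n\to\infty$: from monotonicity the limit $v(x')$ exists and solves $\Delta' v+g(v)=0$ in $\R^{n-1}$ with $0<v\le 1$; a uniform positive lower bound obtained by sliding a subsolution of the type $\varepsilon\phi_1$ as in Remark~\ref{subsol} (which exists because $g'(0)>0$) gives $v>0$, and then Proposition~\ref{BHN} with $b=0$ forces $v\equiv 1$; a further compactness argument over translates $u(\cdot+(x_k',x_{n,k}))$ upgrades this to uniform convergence in $x'$. These steps must really be bootstrapped, since the monotonicity and the limit at infinity feed each other.

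Finally, knowing $u<1$, $u\to 1$ uniformly, and $u_1$, I would prove $u\equiv u_1(x_n)$ by squeezing $u$ between the translates $v_{\tau}(x):=u_1(x_n+\tau)$. For $\tau$ large, $v_{\tau}\ge u$ in $\R^n_+$: a negative interior minimum of $v_{\tau}-u$ would lie where both values are in $(1-\delta,1)$, hence where $g'<0$, contradicting the sign of the Laplacian at a minimum; decreasing $\tau$ to its infimum $\overline{\tau}$ and using the strong maximum principle (once more the structure $g'(1)<0$ near infinity prevents the contact point from escaping to infinity) gives $\overline{\tau}=0$, i.e. $u\le u_1(x_n)$, and the reverse inequality follows symmetrically by sliding $u_1(x_n-\tau)$ upward, so $u(x)=u_1(x_n)$. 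The main obstacle is exactly this interplay: the domain is unbounded in every direction, so there is no global maximum principle, and one must patch together smallness of $u$ near $\partial\R^n_+$, the sign $g'(1)<0$ near infinity, and a sweep in the bounded intermediate region, taking care that contact points in the comparison arguments do not slide off to infinity.
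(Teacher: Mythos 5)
The paper does not prove Proposition~\ref{An}; it cites Angenent \cite{An} (and Berestycki--Caffarelli--Nirenberg for more general versions), and remarks only that Serrin's sweeping method is the key tool. Your sliding-method proposal therefore cannot be checked against a proof in this paper, but it is in the same family of arguments and the main ingredients you identify are the right ones: the a priori bound $0<u<1$, the 1D phase-plane solution $u_1$, the uniform limit $u\to 1$ as $x_n\to\infty$ via translations and a Liouville theorem, and a comparison with translates $u_1(x_n+\tau)$, with the sign $g'(1)<0$ supplying a maximum principle wherever both competitors are close to~$1$.

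Two points deserve to be sharpened. First, the passage where you write ``from monotonicity the limit $v(x')$ exists'' makes the argument look more circular than it needs to be: the uniform convergence $u(x',x_n)\to 1$ as $x_n\to\infty$ can be obtained \emph{before} any monotonicity, by translating $u$ along an arbitrary sequence $(x_k',x_{n,k})$ with $x_{n,k}\to\infty$, passing to a limit solution on all of $\R^n$, ruling out the trivial limit $0$ via the $\varepsilon\phi_1$ subsolution (which only uses $g'(0)>0$), and invoking Proposition~\ref{BHN} to force the limit to be $\equiv 1$. You should also record the companion fact that $\sup_{\{x_n\le R\}}u<1$ for each $R$ (again by translating along a sequence with $x_{n,k}$ bounded, this time obtaining a half-space limit, and applying the strong maximum principle to $1-v$, for which $g'(1)<0$ gives a zeroth-order coefficient of the right sign). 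Both estimates are needed so that, for $\tau$ large, $u_1(x_n+\tau)-u$ is strictly positive on every slab $\{x_n\le R\}$ and, on $\{x_n>R\}$, the linearized operator has a strictly negative zeroth-order coefficient.

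Second, the phrase ``the structure $g'(1)<0$ near infinity prevents the contact point from escaping to infinity'' is where the real work lies, and it deserves an explicit argument: if the infimum of $u_1(x_n+\tau)-u$ over $\{x_n>R\}$ were negative, one would extract a minimizing sequence, translate, and obtain a limiting pair of solutions (on $\R^n$ or on a half-space with boundary at distance at least $R$) for which the difference attains a negative interior minimum, at which point $\Delta\ge 0$ and the sign of $g'$ on $[1-\delta,1]$ give the contradiction. The same translation-compactness step is needed when you decrease $\tau$ to its infimum $\overline\tau$ and show $\overline\tau=0$. These are exactly the places Serrin's sweeping (in Angenent's proof) is designed to handle cleanly, and while your sliding outline is consistent with it, the proposal as written leaves these compactness-at-infinity steps implicit. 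With those details supplied the argument is sound.
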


The two previous propositions use Serrin's sweeping method
as important tool in their proofs (see \cite{An} and 
Remark 4.3 of \cite{BHN}).

A very
simple lemma that we will use in the proof of Theorem~\ref{asym}
is the following
(see Lemma~4.2 of \cite{CT} and its proof there).

\begin{lemma}[\cite{CT}]\label{lemmadist}
For every $x=(x^1, x^2)\in\R^{2m}$, 
the distance from $x$ to
the Simons cone ${\mathcal C}$ is given by
${\rm dist}(x,{\mathcal C})=|s-t|/\sqrt{2}$, where 
$s=|x^1|$ and
$t=|x^2|$. In addition, if $s=|x^1|\neq 0$ and
$t=|x^2|\neq 0$, then
${\rm dist}(x,{\mathcal C})=|x-x_0|$ where
$x_0\in{\mathcal C}$ has coordinates
$$
x_0=(\alpha x^1, \beta x^2)
$$ 
and $\alpha$ and $\beta$ are defined by
$\alpha s=\beta t=(s+t)/2$.
\end{lemma}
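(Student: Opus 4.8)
The plan is to reduce the computation of ${\rm dist}(x,\mathcal C)$ to an elementary one-variable minimization, exploiting the product structure $\R^{2m}=\R^m\times\R^m$. Write $x=(x^1,x^2)$ with $s=|x^1|$ and $t=|x^2|$, and let $y=(y^1,y^2)\in\mathcal C$ be arbitrary, so that $|y^1|=|y^2|=:r\ge 0$. For a fixed radius $r$, the quantity $|x^1-y^1|$ over the sphere $\{|y^1|=r\}\subset\R^m$ is minimized (by the reverse triangle inequality) to the value $|s-r|$, attained at the radial projection of $x^1$ onto that sphere, and likewise for the second block; hence
$$
|x-y|^2=|x^1-y^1|^2+|x^2-y^2|^2\ \ge\ (s-r)^2+(t-r)^2 ,
$$
with equality when $y^1=(r/s)\,x^1$ and $y^2=(r/t)\,x^2$, assuming for the moment $s,t\ne 0$.

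First I would minimize $h(r):=(s-r)^2+(t-r)^2$ over $r\ge 0$. Since $h'(r)=4r-2(s+t)$, the minimum is attained at $r^*=(s+t)/2\ge 0$, with $h(r^*)=(s-t)^2/2$. Taking the infimum over $y\in\mathcal C$ in the displayed inequality gives ${\rm dist}(x,\mathcal C)^2\ge (s-t)^2/2$; conversely the point $y$ with $|y^1|=|y^2|=r^*$ exhibited above belongs to $\mathcal C$ and attains this value, so ${\rm dist}(x,\mathcal C)=|s-t|/\sqrt 2$, which is the first claim.

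For the second claim I would take, when $s\ne 0$ and $t\ne 0$, the point $x_0=(\alpha x^1,\beta x^2)$ with $\alpha$ and $\beta$ determined by $\alpha s=\beta t=(s+t)/2$, that is $\alpha=(s+t)/(2s)$ and $\beta=(s+t)/(2t)$. Then $|x_0^1|=\alpha s=(s+t)/2=\beta t=|x_0^2|$, so $x_0\in\mathcal C$, and $x_0$ is precisely the minimizer identified above with $r=r^*$, whence $|x-x_0|=\sqrt{h(r^*)}=|s-t|/\sqrt2={\rm dist}(x,\mathcal C)$. I do not expect any real obstacle here; the only point deserving a line of care is the degenerate case $s=0$ or $t=0$, where the nearest point on the relevant sphere of radius $r^*$ is not unique (one may take any such point) but the value of the distance is unchanged, so the first formula persists and the explicit description of $x_0$ is simply not asserted in that case.
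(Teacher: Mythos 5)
Your proof is correct: reducing the minimization over $\mathcal C$ to a one-variable problem in the common radius $r$ via the radial structure of each $\R^m$ factor, then minimizing $h(r)=(s-r)^2+(t-r)^2$ at $r^*=(s+t)/2$, gives exactly the stated formula, and the explicit minimizer $x_0$ when $s,t\ne 0$ is identified correctly, with the degenerate cases handled as you note. The present paper does not reproduce its own proof of this lemma (it cites Lemma~4.2 of \cite{CT}), but your argument is the standard and expected one, and there is no gap.
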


\begin{proof}[Proof of Theorem \ref{asym}]
Suppose that the theorem does not hold. Thus, let $u$ be a solution of 
$-\Delta u=f(u)$ in $\R^{2m}$
that vanishes on the cone ${\mathcal C}$, is positive in the region
${\mathcal O}=\{s>t\}$ and odd with respect to ${\mathcal C}$, and for which
there exists
$\varepsilon>0$ and a sequence
$\{x_k\}$ with 
\begin{equation}\label{contra}
|x_k|\rightarrow\infty\quad \text{  and } \quad|u(x_k)-U(x_k)|+|\nabla u(x_k)-\nabla U(x_k)|  
> \varepsilon.
\end{equation}
By continuity  we may move slightly $x_k$ and assume
$x_k\not\in{\mathcal C}$ for all $k$. Moreover, up to a
subsequence (which we still denote by $\{x_k\}$), either
$\{x_k\}\subset\{s>t\}$ or  $\{x_k\}\subset\{s<t\}$. By the symmetries
of the problem we may
assume  $\{x_k\}\subset\{s>t\}={\mathcal O}$.

Recall that, by Proposition \ref{prop}, we know that $|u(x)|\leq
|u_0(z)|<M$ in $\R^{2m}$. We  distinguish  two cases:

\vspace{1em}

{\sc Case 1.} $\{{\rm dist}(x_k,{\mathcal C})=:d_k\}$ is an unbounded sequence.

In this case, since $0<z_k={\rm dist}(x_k,{\mathcal C})=d_k\rightarrow
+\infty$ (for a subsequence), we have that $U(x_k)=u_0(z_k)=u_0(d_k)$
tends to $M$
and $|\nabla U(x_k)|$ tends to $0$, that is,
$$U(x_k) \rightarrow M\quad {\rm and}\quad |\nabla
U(x_k)|\rightarrow 0.$$
From this and \eqref{contra} we have
\begin{equation}\label{case1}
|u(x_k)-M| + |\nabla u(x_k)|\geq\frac{\varepsilon}{2},
\end{equation}
for $k$ large enough.
 
Consider the ball $B_{d_k}(0)$ of radius $d_k$ centered at $0$, and define
$$v_k(\tilde{x})=u(\tilde{x}+x_k),\qquad\text{for }\tilde{x}\in B_{d_k}(0).$$
Since $B_{d_k}(0)+x_k\subset\{s>t\}$ we have that $0<v_k<M$ in $B_{d_k}(0)$ and
$-\Delta v_k = f(v_k)$ in $B_{d_k}(0)$. Letting $k$ tend to infinity we
obtain, through a subsequence, 
a solution $v$ to the problem in all of $\R^{2m}$. That is, $v$
satisfies
\begin{equation}\label{solpos}
\left\{
\begin{array}{rl}
 -\Delta v=f(v) & {\rm in}\; \R^{2m} \\
  v\geq 0 & {\rm in}\; \R^{2m}.
\end{array}
\right.
\end{equation}

Define $w=v/M$ where $M$ is given by the assumptions on $f$. Then $w$
is a nonnegative solution of 
$-\Delta w=g(w)$ in $\R^{2m}$,
where $g(w)=\frac{1}{M}f(Mw)$. Since $w\leq 1$ in $\R^{2m}$, we may
change $g$ in $(1,+\infty)$ to be negative here (for instance, taking
$g$ linear in $(1,+\infty)$ with slope $g'(1)<0$ and $g(1)=0$), and
$w$ is still a solution of the same equation. Since $f$ satisfies \eqref{H}, 
the function $g$ (changed as  above in $(1,+\infty)$) satisfies the conditions of
Proposition~\ref{BHN} with $b\equiv 0$. Hence we have that $w\equiv 0$ or
$w\equiv 1$, that is, $v\equiv 0$ or $v\equiv M$. In either case, 
$\nabla v(0)=0$, that is,
$|\nabla u(x_k)|$ tends to
$0$. 

Next we show that $v\not\equiv 0$. By Proposition \ref{cstable}
(established in the previous section), we have that $u$ is stable in
${\mathcal O}$. 
Hence, $v_k$ is semi-stable in $B_{d_k}(0)$ (since $B_{d_k}(0)+x_k
\subset {\mathcal O}$) in the sense of Definition~\ref{defstablebounded}. 
This implies
that $v$ is
stable in all of $\R^{2m}$ and therefore $v\not\equiv 0$ (otherwise,
since $f'(0)>0$ we could construct a test function $\xi$ such that
$Q_v(\xi)<0$ which would be a contradiction with the fact that $v$ is
stable). 

Hence,
it must be $v \equiv M$. But this implies that $v(0)=M$ and so
$u(x_k)$ tends to $M$.
Hence, we have that $u(x_k)$ tends to $M$ and $|\nabla u(x_k)|$ tends
to $0$, which is a contradiction with ($\ref{case1}$). Therefore, we
have proved the theorem in this case 1.

\vspace{1em}

{\sc Case 2.} $\{{\rm dist}(x_k,{\mathcal C})=:d_k\}$ is a bounded sequence.

The points $x_k$ remain at a finite distance to the cone. Then, at
least for a subsequence, $d_k\rightarrow d\geq 0$ as $k\rightarrow\infty$.
Let $x_k^0\in{\mathcal C}$ be a point that realizes the distance to the cone, that
is,
\begin{equation}\label{case2}
{\rm dist}(x_k,{\mathcal C})=|x_k-x_k^0|=d_k,
\end{equation}
and let $\nu_k^0$ be the inner unit normal to ${\mathcal
  C}=\partial{\mathcal O}$ at $x_k^0$. Note
that $B_{d_k}(x_k)\subset{\mathcal O}\subset\R^{2m}\setminus{\mathcal C}$ and 
$x_k^0\in\partial B_{d_k}(x_k)\cap{\mathcal C}$, i.e., $x_k^0$ is the
point where the sphere $\partial B_{d_k}(x_k)$ is tangent to the cone
${\mathcal C}$. It follows that
$x_k^0\neq 0$ and also that $(x_k-x_k^0)/d_k$ is the unit normal $\nu_k^0$
to ${\mathcal C}$ at $x_k^0$. That is, $x_k=x_k^0+d_k\nu_k^0$. This
will be checked below, in an alternative way, with explicit formulae.
Now, since the sequence $\{\nu_k^0\}$ is bounded, there exists a
subsequence such that
$$\nu_k^0\rightarrow \nu \in\R^{2m}, \quad |\nu|=1.$$

We define 
$$
w_k(\tilde{x})=u(\tilde{x}+x_k^0) \qquad\text{for }\tilde{x}\in\R^{2m}.
$$ 
The functions $w_k$
are all solutions of $-\Delta w=f(w)$ in $\R^{2m}$ and are uniformly
bounded. Hence, by interior
elliptic estimates the sequence $\{w_k\}$ converges locally 
in $C^2$, up to a subsequence, to a
solution in $\R^{2m}$. Therefore we
have that, as $k$ tends to infinity and up to a subsequence,
$$w_k\rightarrow w \quad {\rm and}\quad \nabla w_k\rightarrow \nabla w
\;\text{uniformly on compact sets of } \R^{2m},$$ where $w$ is a solution 
 of $-\Delta w=f(w)$ in $\R^{2m}$. Note that the curvature of ${\mathcal C}$ at
$x_k^0$ goes to zero as $k$ tends to infinity, since ${\mathcal C}$ is
a cone and
$|x_k^0|\rightarrow\infty$ (note that $|x_k^0|\rightarrow\infty$ due to 
$|x_k|\rightarrow\infty$ and $|x_k-x_k^0|=d_k\rightarrow
d<\infty$). Thus, ${\mathcal C}$ at $x_k^0$ is flatter and flatter as 
$k\rightarrow\infty$ and since we translate $x_k^0$ to $0$, the
limiting function $w$ satisfies (all this we will prove below in detail)
\begin{equation}\label{claimH}
\left\{ \begin{array}{rl}
        \Delta w+f(w)=0 & \textrm{ in } H:=\{\tilde{x}\cdot\nu > 0\}\\
                               w=0 & \textrm{ on } \partial H\\
                               w > 0 & \textrm{ in } H,
         \end{array}\right.
\end{equation}
a problem in a half-space.

Now, by Proposition~\ref{cstable}, $u$ is stable for perturbations 
with compact support in ${\mathcal
O}$, and thus $w_k$ (for $k$ large) and $w$ are stable for perturbations with compact
support in $H$ ---see the computation in \eqref{chalf} below for details. 
Therefore $w$ can not be identically zero. 
By assumption \eqref{H} on $f$, we can apply Proposition~\ref{An} above 
(applied to $w/M$ to have $M=1$), and deduce that $w$ is the unique 1D solution, 
that is, the solution depending only on one variable (the orthogonal direction to $\partial H$). 
Hence,
\[
 w (\tilde x) =
u_0(\tilde x \cdot \nu)\quad 
\text{ for all }\tilde{x}\in H.
\]

{From} the definition of $w_k$, and using that $z_k=d_k=|x_k-x_k^0|$ is a
bounded sequence and that $x_k-x_k^0=d_k\nu_k^0$, we have that 
\begin{eqnarray*}
u(x_k) & = & w_k(x_k -x_k^0) = w(x_k - x_k^0) +{\rm o}(1) = 
u_0 ((x_k - x_k^0) \cdot \nu) +{\rm o}(1)  \\
 & = &  u_0((x_k - x_k ^0)\cdot
\nu_k^0)+{\rm o}(1) = u_0(d_k |\nu_k^0|^2) +{\rm o}(1) \\
& = &  u_0(z_k)+{\rm o}(1)= U(x_k)+{\rm o}(1).
\end{eqnarray*}
The same argument can be done for $\nabla u(x_k)$ and $\nabla
U(x_k)$. We arrive to a contradiction with \eqref{contra}.

Finally, we prove in detail the statements in \eqref{claimH}: 
$w > 0$ in $H$ and $w =0$ on $\partial H$,  as well as the identity 
$x_k-x_k^0=d_k\nu_k^0$ used before. 
 
Let $x_k\in{\mathcal O}$ and $x_k^0\in{\mathcal C}$ as in
\eqref{case2}.
Since $|x_k|\to\infty$
and $\textrm{dist}(x_k,{\mathcal C})=|s_k-t_k|/\sqrt{2}$ remains bounded,
we have $s_k\neq 0$ and $t_k\neq 0$ for $k$ large enough.
Thus, by Lemma~\ref{lemmadist} we may write the coordinates
of $x_k^0$ as  
$$x_k^0=(\alpha x_k^1,\beta x_k^2),\quad {\rm for} \quad 2 \alpha
s_k=2\beta t_k=s_k+t_k.$$
In particular, $|x_k^0|\neq 0$. Then, since we are assuming, without loss of generality, that $x_k\in{\mathcal
  O}=\{s>t\}$ we have that
$$d_k=|x_k-x_k^0|=\frac{s_k-t_k}{\sqrt{2}}.$$
Since ${\mathcal C}=\{s^2-t^2=0\}$ we can write the coordinates of
$\nu_k^0$ as
$$\nu_k^0=\left( \frac{x_k^{0,1}}{|x_k^0|},\dots,\frac{x_k^{0,m}}{|x_k^0|},-
 \frac{x_k^{0,m+1}}{|x_k^0|},\dots,
 -\frac{x_k^{0,2m}}{|x_k^0|}\right).$$ 
Using the previous identities, we conclude that
$$x_k-x_k^0=d_k\nu_k^0\quad {\rm with} \quad
\nu_k^0=(\frac{\alpha x_k^1}{|x_k^0|},-\frac{\beta x_k^2}{|x_k^0|}).$$

Next, for all $\tilde{x}\in\R^{2m}$,
\begin{equation}\label{esc}
 \tilde{x}\cdot
  \nu_k^0=\sum_{i=1}^m\frac{x_k^{0,i}\tilde{x}_i}{|x_k^0|}-\sum_{j=m+1}^{2m}
\frac{x_k^{0,j}\tilde{x}_j}{|x_k^0|} = \frac{x_k^{0,i} \tilde{x}_i - x_k^{0,j} \tilde{x}_j}{|x_k^0|}.
\end{equation}
Since $|\nabla u|\in L^{\infty}(\R^{2m})$ and $u(x_k^0)=0$,
\begin{equation}
\label{vk}
|w_k(\tilde{x})|=|u(x_k^0+\tilde{x})|\leq C {\rm dist}(x_k^0+\tilde{x},{\mathcal C}).
\end{equation}
From Lemma \ref{lemmadist} we have
that ${\rm dist}(x,{\mathcal C})=|s_x-t_x|/\sqrt{2}$.
Recall also that $t_{x_k^0}=s_{x_k^0}$. Now, using \eqref{esc} and sums over
repetead indices $i=1,\ldots ,m$ and $j=m+1,\ldots ,2m$ as before, we have
\begin{eqnarray*}
{\rm dist}(x_k^0+\tilde{x},{\mathcal C}) &\hspace{-1em} = & \hspace{-1em}{\ds \frac{1}{\sqrt{2}}\left|
\sqrt{s_{x_k^0}^2+s_{\tilde{x}}^2+2x_k^{0,i}\tilde{x}_i}-\sqrt{s_{x_k^0}^2+t_{\tilde{x}}^2+
2x_k^{0,j}\tilde{x}_j}\right|}\\
& \hspace{-1em}= & \hspace{-1em}{\ds \frac{1}{\sqrt{2}}\left|
\frac{s_{\tilde{x}}^2-t_{\tilde{x}}^2+2 |x_k^0| \tilde{x} \cdot \nu_k^0}
{\sqrt{s_{x_k^0}^2+s_{\tilde{x}}^2+2x_k^{0,i}\tilde{x}_i}+\sqrt{s_{x_k^0}^2+t_{\tilde{x}}^2+
2x_k^{0,j}\tilde{x}_j}}\right|
}\\
&\hspace{-1em} = & \hspace{-1em}{\ds \left|\frac{1}{\sqrt{2}}
\frac{s_{\tilde{x}}^2-t_{\tilde{x}}^2}{|x_k^0|\left(\sqrt{\frac{1}{2}+\frac{s_{\tilde{x}}^2}
{|x_k^0|^2}
+\frac{2x_k^{0,i}\tilde{x}_i}{|x_k^0|^2}} +\sqrt{\frac{1}{2}+\frac{t_{\tilde{x}}^2}
{|x_k^0|^2}
+\frac{2x_k^{0,j}\tilde{x}_j}{|x_k^0|^2}}\right)}\right.} \\
& & \hspace{-1em}+{\ds \left.\frac{2}{\sqrt{2}}
\frac{ \tilde{x} \cdot \nu_k^0}{\sqrt{\frac{1}{2}+\frac{s_{\tilde{x}}^2}{|x_k^0|^2}
+\frac{2x_k^{0,i}\tilde{x}_i}{|x_k^0|^2}} +\sqrt{\frac{1}{2}+\frac{t_{\tilde{x}}^2}{|x_k^0|^2}
+\frac{2x_k^{0,j}\tilde{x}_j}{|x_k^0|^2}}}\right|}.
\end{eqnarray*}
Now we let $k$ tend to infinity. Since $\tilde{x}$ is fixed,
$|x_k^0|\rightarrow\infty$ and $\nu_k^0\rightarrow\nu$, we obtain
$${\rm dist}(x_k^0+\tilde{x},{\mathcal C})\rightarrow |\tilde{x} \cdot \nu|\quad \text{ as } k\rightarrow\infty.$$
Thus, since
$w_k(\tilde{x})\rightarrow w(\tilde{x})$, from \eqref{vk} we have that $w(\tilde{x})=0$ for
every $\tilde{x}\in\partial H=\{\tilde{x}\cdot\nu=0\}$.

Let now $\tilde{x}\in H$, i.e, $\tilde{x}\cdot\nu>0$. We will prove that $w(\tilde{x})\geq 0$. 
We show that for $k$ large,
$w_k(\tilde{x})=u(x_k^0+\tilde{x})\geq 0$. 
Since $u$ is positive in ${\mathcal O}=\{s>t\}$,
we need to
 establish
that $s_{x_k^0+\tilde{x}}\geq t_{x_k^0+\tilde{x}}$ for $k$ large.
As above, we have that
\begin{eqnarray*}
s_{x_k^0+\tilde{x}}-t_{x_k^0+\tilde{x}} & = & 
\sqrt{s_{x_k^0}^2+s_{\tilde{x}}^2+2x_k^{0,i}\tilde{x}_i}-\sqrt{s_{x_k^0}^2+t_{\tilde{x}}^2
+2x_k^{0,j}\tilde{x}_j}\\
& = & {\ds \frac{s_{\tilde{x}}^2-t_{\tilde{x}}^2+2 |x_k^0| \tilde{x} \cdot \nu_k^0}
{\sqrt{s_{x_k^0}^2+s_{\tilde{x}}^2+2x_k^{0,i}\tilde{x}_i}+\sqrt{s_{x_k^0}^2+t_{\tilde{x}}^2
+2x_k^{0,j}\tilde{x}_j}}},
\end{eqnarray*}
and, letting $k$ tend to infinity, since $\tilde{x}$ is fixed and
$|x_k^0|\rightarrow\infty$, we obtain
\begin{equation}\label{chalf}
s_{x_k^0+\tilde{x}}-t_{x_k^0+\tilde{x}} \rightarrow \sqrt{2}
\tilde{x}\cdot\nu >0 \quad\text{  as }
k\rightarrow\infty.
\end{equation}
Therefore $w(\tilde{x})\geq 0$.
\end{proof}

\section{Instability in dimension $6$}\label{sctuns6}

In this section we prove Theorem \ref{uns6}, establishing that saddle
solutions in dimension $6$ are unstable outside of every compact set. 
The asymptotic analysis done
for $n=4$ in the proof of the instability theorem of \cite{CT} does
not lead to the instability of saddle solutions in dimensions $n\geq 6$.
Indeed,  the supersolution $u_0(z)=u_0((s-t)/\sqrt{2})$ 
is, in every dimension $n=2m\geq 6$,
asymptotically stable at infinity in some weak sense and with respect to 
perturbations $\xi(y,z)$ with separate variables.

Hence, the proof of instability in dimension $6$ requires a more precise argument.
We use the equation satisfied by $\usup_z$, where
$\usup$ is the maximal saddle solution, as well as the monotonicity and
asymptotic properties of  $\usup$. We prove
that $\usup$ is unstable outside of every compact set by constructing 
test functions
$\xi(y,z)=\eta(y)\usup_z(y,z)$ such that $Q_{\usup}(\xi)<0$.
Since $\usup$ is maximal, its instability outside of compact sets 
implies that the same instability 
property holds for all bounded solutions $u$ vanishing on the Simons cone
${\mathcal C}$ and positive in ${\mathcal O}$.

Recall that a  bounded solution $u$  of $-\Delta u=f(u)$ in $\R^{2m}$
is stable provided
$$Q_u(\xi)=\int_{\R^{2m}}\left\{|\nabla\xi|^2-f'(u)\xi^2\right\}dx 
\geq 0\qquad
\text{ for all } \xi\in C_c^{\infty}(\R^{2m}).$$
If $v$ is a function depending only on $s$ and $t$, the quadratic form
$Q_v(\xi)$ acting on perturbations of the form $\xi=\xi(s,t)$ becomes
$$c_m Q_v(\xi)= \int_{\{s> 0, t> 0\}} s^{m-1}t^{m-1}\left\{\xi_s^2+\xi_t^2-
f'(v)\xi^2\right\}dsdt ,$$
where $c_m>0$ is a constant depending only on $m$.
We can further change to variables $(y,z)$ and obtain, 
for a different constant $c_m>0$, 
\begin{equation}\label{five2}
c_m Q_v(\xi)=\int_{\{-y<z<y\}}(y^2-z^2)^{m-1}\left\{\xi_y^2+\xi_z^2-
f'(v)\xi^2\right\}dydz .
\end{equation}

Given the definition of the variables $y$ and $z$,
a function $\xi=\xi(y,z)$ has compact support in $\R^{2m}$ if and only if 
$\xi(y,z)$ vanishes for $y$ large enough.

Now let $u$ be a bounded
solution of $-\Delta u=f(u)$ in $\R^{2m}$ vanishing on the
Simons cone ${\mathcal C}=\{s=t\}$ and positive in ${\mathcal O}=\{s>t\}$.
By Theorem~\ref{teominmax}, we know that 
$$
|u(x)|\leq |\usup(x)|
\qquad\text{ in all of } \R^{2m}.
$$ 
This leads to
$f'(|u(x)|)\geq f'(|\usup(x)|)$ for all $x\in\R^{2m}$, since we
assume $f$ to be concave in $(0,M)$. Now, since $f'$ is even,
we deduce that
$$
f'(u(x))\geq f'(\usup(x))\qquad\text{ for all  } x\in\R^{2m}.
$$ 
Therefore, we conclude
\begin{equation}\label{ineqQ}
Q_u(\xi)\leq Q_{\usup}(\xi)\qquad\text{ for all } 
\xi\in C^{\infty}_c(\R^{2m}).
\end{equation}

It follows that, in order to prove that $u$ is unstable, it
suffices to find a smooth function $\xi$ with compact support 
in $\R^{2m}$ for which $Q_{\usup}(\xi)< 0$. Note also that, by an
approximation argument, it suffices to find a Lipschitz function 
$\xi=\xi(y,z)$, not necessarily smooth,  vanishing for all $y\in (0,+\infty)\setminus I$,
where $I$ is a compact interval in $(0,+\infty)$, and 
for which $Q_{\usup}(\xi)< 0$.

\begin{proof}[Proof of Theorem $\ref{uns6}$] By the previous arguments,
  it suffices to
establish that the maximal solution $\usup$, whose existence is
given by Theorem $\ref{teominmax}$, is unstable outside of every
compact set.

We have, for every test function $\xi$,
$$Q_{\usup}(\xi)=\int_{\R^{2m}}\left(|\nabla
\xi|^2-f'(\usup)\xi^2\right)dx.$$
Suppose now that $\xi=\xi(y,z)=\eta(y, z)\psi(y, z)$,
where $\eta$ and $\psi$ are Lipschitz functions, and $\eta(y,z)=0$ whenever
$y\in (0,+\infty)\setminus I$,
with $I$ a compact interval in $(0,+\infty)$. The expression for $Q_{\usup}$
becomes, since $|\nabla\xi|^2=\xi_y^2+\xi_z^2$,
$$Q_{\usup}(\xi)=\int_{\R^{2m}}\left(|\nabla
\eta|^2\psi^2+\eta^2|\nabla\psi|^2 + 2\eta\psi\nabla\eta\cdot\nabla\psi-f'(\usup)\eta^2\psi^2\right)dx.$$
Integrating by parts and using that 
$
2\eta\psi\nabla\eta\cdot\nabla\psi=\psi\nabla(\eta^2)\cdot\nabla\psi,$
we have
$$Q_{\usup}(\xi)=\int_{\R^{2m}}\left(|\nabla
\eta|^2\psi^2- \eta^2\psi\Delta\psi-f'(\usup)\eta^2\psi^2\right)dx,$$
that is, 
$$Q_{\usup}(\xi)=\int_{\R^{2m}}\left(|\nabla
\eta|^2\psi^2- \eta^2\psi(\Delta\psi+f'(\usup)\psi)\right)dx.$$
Choose now $\psi(y,z)=\usup_z(y,z)$. Since $\usup$ is a
saddle solution of $-\Delta \usup=f(\usup)$ we may differentiate this
equation written in $(y,z)$ variables ---see \eqref{eqyz}--- with respect to $z$ and
find
\begin{equation}\label{linz}
 0=\Delta \usup_z + f'(\usup)\usup_z -\frac{2(m-1)}{y^2-z^2}\usup_z +
\frac{4(m-1)z}{(y^2-z^2)^2}(y\usup_y -z\usup_z).
\end{equation}
Replacing in the expression for $Q_{\usup}$ we obtain,
\[
\hspace{-15em}Q_{\usup}(\xi)=\int_{\R^{2m}}{\Big (}|\nabla\eta|^2{\usup}_z ^2-
\]
\[
\hspace{10em}-\eta^2{\Big
\{}\frac{2(m-1)(y^2+z^2)}{(y^2-z^2)^2}{\usup}_z^2 -
\frac{4(m-1)zy}{(y^2-z^2)^2}{\usup}_y{\usup}_z{\Big \}}{\Big )}dx.
\]
Next we change coordinates to $(y,z)$ and as in \eqref{five2} we have,
for some positive constant $c_m$,
\[
\hspace{-10em}c_mQ_{\usup}(\xi)=\int_{\{-y<z<y\}}(y^2-z^2)^{m-1}{\Big (}|\nabla\eta|^2{\usup}_z ^2-
\]
\[
\hspace{8em}-\eta^2{\Big
\{}\frac{2(m-1)(y^2+z^2)}{(y^2-z^2)^2}{\usup}_z^2 -
\frac{4(m-1)zy}{(y^2-z^2)^2}{\usup}_y{\usup}_z{\Big \}}{\Big )}dydz.
\]

For $a>1$, a constant that we will make tend to infinity, let 
$\eta=\eta(\rho)$ be a Lipschitz function of $\rho:=y/a$ with compact support 
$[\rho_1,\rho_2]\subset (0,+\infty)$. Let us denote by 
$$\eta_a(y)=\eta(y/a)\quad \text{ and }\quad \xi_a(y,z)=\eta_a(y)\usup_z(y,z)=\eta(y/a)\usup_z(y,z),$$ 
the functions named $\eta$ and $\xi$ above. The
change $y=a\rho,dy=ad\rho$ yields

\begin{eqnarray}\label{dim62}
 c_mQ_{\usup}(\xi_a)& = & a^{2m-3}\int_{\{-a\rho <z<a\rho
  \}}\rho^{2(m-1)}(1-
\frac{z^2}{a^2\rho^2})^{m-1}{\Big
(}\eta_{\rho}^2{\usup}_z^2-\\
\nonumber &  & -\eta^2{\Big
\{}\frac{2(m-1)(1+\frac{z^2}{a^2\rho^2})}{\rho^2(1-\frac{z^2}{a^2\rho^2})^2}{\usup}_z^2
- \frac{4(m-1)z}{a\rho^3(1-\frac{z^2}{a^2\rho^2})^2}{\usup}_y{\usup}_z{\Big \}}{\Big
)}d\rho dz.
\end{eqnarray}

Replacing $m=3$ we get
$$
\hspace{-2.5cm}
{\ds
\frac{c_3Q_{\usup}(\xi_a)}{a^3}=\int_{\{-a\rho <z<a\rho \}}\rho^{4}(1-\frac{z^2}{a^2\rho^2})^2
{\Big
(}\eta_{\rho}^2{\usup}_z^2-}$$

$$-\eta^2{\Big
\{}\frac{4(1+\frac{z^2}{a^2\rho^2})}{\rho^2(1-\frac{z^2}{a^2\rho^2})^2}{\usup}_z^2
- \frac{8z}{a\rho^3(1-\frac{z^2}{a^2\rho^2})^2}{\usup}_y{\usup}_z{\Big \}}{\Big
)}d\rho dz.$$
We obtain, by using $\left(1-\frac{z^2}{a^2\rho^2}\right)^2\leq 1$ and 
$ 1+\frac{z^2}{a^2\rho^2}\geq 1$,

$$\frac{c_3Q_{\usup}(\xi_a)}{a^3}\leq \int_{\{-a\rho <z<a\rho \}}
\rho^{4}\usup_z^2(a\rho,z)\left(\eta_{\rho}^2-\frac{4}{\rho^2}\eta^2\right)d\rho
dz +$$

\[
+  \int_{\{-a\rho <z<a\rho \}}
\frac{8z\rho\eta^2(\rho)}{a}{\usup}_y(a\rho,z){\usup}_z(a\rho,z) d\rho
dz.
\]

We study these two integrals separately. From Theorem $\ref{asym}$
we have that $\usup_y(a\rho,z)\rightarrow 0$
uniformly, for all $\rho\in[\rho_1,\rho_2]=\textrm{supp }\eta$, as $a$ tends to infinity.  
Hence, given $\varepsilon>0$, for $a$ sufficiently 
large,
$|\usup_y(a\rho,z)|\leq \varepsilon$. Moreover, by Theorem
\ref{teominmax} we have that $\usup_z\geq 0$ in $\R^6$. Hence, since $\eta$
is bounded, for $a$ large we have
\begin{eqnarray*}
\left|\int \frac{8z\rho\eta^2(\rho)}{a}{\usup}_y{\usup}_z d\rho dz\right| & \leq & \int \left|\frac{8z\rho\eta^2(\rho)}{a}\right||{\usup}_y|{\usup}_z d\rho dz \\
 & \leq & \int 8\rho^2\eta^2(\rho)|\usup_y|\usup_z d\rho dz \\
& \leq & C\varepsilon \int_{\rho_1}^{\rho_2} \rho^2d\rho\int_{-a\rho}^{a\rho}\usup_zdz \\
& = & C\varepsilon \int_{\rho_1}^{\rho_2} \left(\usup(a\rho,a\rho)-\usup(a\rho,-a\rho)\right) d\rho \\
& \leq & C\varepsilon,
\end{eqnarray*}
where $C$ are different constants depending on $\rho_1$ and $\rho_2$. Hence, as
$a$ tends to infinity, this integral converges to zero. 

Now, for the other integral
we have that again by Theorem $\ref{asym}$, $\usup_z(a\rho,z)$ converges to
$\dot{u}_0(z)$. We write
\begin{eqnarray*}
\int_{\{-a\rho <z<a\rho \}}
\rho^{4}\usup_z^2(a\rho,z)\left(\eta_{\rho}^2-\frac{4}{\rho^2}\eta^2\right)d\rho
dz & = & \\
&  & \hspace{-19em}=\int_{\{-a\rho <z<a\rho \}} \dot{u}_0^2(z) \rho^{4}\left(\eta_{\rho}^2-\frac{4}{\rho^2}\eta^2\right)d\rho dz + \\
& & \hspace{-22em} + \int_{\{-a\rho <z<a\rho \}}
\rho^{4}(\usup_z(a\rho,z)-\dot{u}_0(z))(\usup_z(a\rho,z)+\dot{u}_0(z))\left(\eta_{\rho}^2-\frac{4}{\rho^2}\eta^2\right)d\rho dz.
\end{eqnarray*}
For $a$ large, $|\usup_z(a\rho,z)-\dot{u}_0(z)|\leq \varepsilon$ for all
$\rho\in [\rho_1,\rho_2]$. In addition,
$\usup_z(a\rho,z)+\dot{u}_0(z)$ is positive and is a derivative with
respect to $z$ of a bounded function, thus it is integrable in $z$
(by the fundamental theorem of calculus). 
Hence, since $\eta=\eta(\rho)$ is smooth with compact support, the
second integral converges to zero as $a$ tends to infinity.
Therefore, letting $a$ tend to infinity, we obtain
\begin{equation}\label{dim6}
\limsup_{a\rightarrow\infty}\frac{c_3Q_{\usup}(\xi_a)}{a^3}\leq \left(\int_{-\infty}^{+\infty} \dot{u}_0^2(z) dz\right)
\int \rho^{4}\left(\eta_{\rho}^2-\frac{4}{\rho^2}\eta^2\right)d\rho.
\end{equation}
Recall that the integral in $dz$ is finite (since
$\dot{u}_0^2\leq C\dot{u}_0$ and $\dot{u}_0$ is positive and is a derivative 
of a bounded function).

The integral in $d\rho$ can be seen as
an integral in $\R^5$ of radial functions $\eta=\eta(\rho)$. Hardy's inequality in
$\R^5$
states (see e.g.~\cite{GP}) that
$$\frac{(5-2)^2}{4}\int_{\R^5} \frac{\eta^2}{|x|^2}dx \leq \int_{\R^5}|\nabla
 \eta|^2dx $$ holds for every $H^1$ function $\eta=\eta(\rho)$ with
 compact support, and the  constant
 $(5-2)^2/4=9/4$ is the best possible (even among compactly supported
 Lipschitz functions). Hence, since
$$4>\frac{9}{4}=\frac{(5-2)^2}{4},$$ we have that the second integral
 in \eqref{dim6} is negative for some compactly supported Lipschitz
 function $\eta$.
We conclude that $\usup$ is unstable. Therefore, by the arguments
 preceding this proof, also every bounded solution
 of $-\Delta u=f(u)$ in $\R^6$ such that $u=0$ on the Simons cone
 ${\mathcal C}$ and $u$ is positive in ${\mathcal O}$ is unstable.

The following is a direct way (without using Hardy's inequality)  
to see that the second integral in \eqref{dim6} is negative 
for some Lipschitz function $\eta$ with 
compact support in $(0,\infty)$. 
For $\alpha >0$ and $0<2\rho_1<1<\rho_2$, let
\begin{equation*}
\eta(\rho)= \left\{
\begin{array}{ll}
(1-\rho_2^{-\alpha})\rho_1^{-1}(\rho-\rho_1) & \text{ for } \rho_1\leq \rho \leq 2\rho_1\\
1-\rho_2^{-\alpha} & \text{ for }2\rho_1\leq \rho \leq 1\\
\rho^{-\alpha}-\rho_2^{-\alpha} & \text{ for } 1\leq \rho \leq \rho_2\\
0 & \text{ otherwise},
\end{array}
\right.
\end{equation*}
a Lipschitz function with compact support $[\rho_1,\rho_2]$.
We simply compute the second integral in \eqref{dim6} and find
\begin{eqnarray*}
 \int_{0}^{+\infty}
 \rho^4\left\{\eta_{\rho}^2-\frac{4\eta^2}{\rho^2}\right\}d\rho & \leq & 
  \int_{\rho_1}^{2\rho_1}\rho^4 (1-\rho_2^{-\alpha})^2 \rho_1^{-2}
 d\rho+\\
& &  + 
\int_1^{+\infty} \alpha^2\rho^{2-2\alpha}d\rho
- \int_1^{\rho_2} 4\rho^2(\rho^{-\alpha}-\rho_2^{-\alpha})^2 d\rho.
\end{eqnarray*}
Choosing $3/2<\alpha<2$, as $\rho_2\to\infty$ the difference of the
last two integrals converges to a negative number,
since $\alpha^2<4$ and $2-2\alpha < -1$. 
Since the first of the three last integrals is
bounded by $7\rho_1^3$, we conclude that the above expression is
negative by choosing $\rho_2$ large enough and then $\rho_1$
small enough.

The previous proof of instability also leads to the instability
outside of every compact set ---and thus to the infinite Morse
index property of $u$. Indeed, choosing $\rho_1$ and
$\rho_2$ (and thus $\eta$) as above, we 
consider the corresponding function $\xi_a$ for $a>1$. Now,
\eqref{ineqQ} and \eqref{dim6} lead to 
$Q_u (\xi_a)\leq Q_{\usup}(\xi_a) <0$ for $a$ large enough. 
Thus, the Lipschitz function $\xi_{a}$ makes $Q_u$
negative for $a$ large, and has compact support contained in 
$\{a\rho_1 \leq (s+t)/\sqrt{2}\leq a\rho_2\}$. By approximation,
the same is true for a function $\xi$ of class $C^1$, not
only Lipschitz.
Hence, given any compact set $K$ of $\R^6$, by taking $a$ 
large enough we conclude that $u$ is unstable outside $K$,
as stated in the theorem.

{From} the instability outside every compact set, it follows
that $u$ has infinite Morse index in the sense of 
Definition~\ref{morse}. Indeed, let $X_k$ be a subspace
of $C^1_c(\R^6)$ of dimension $k$, generated by functions
$\xi_1,\ldots,\xi_k$, and with $Q_u(\xi)<0$ for all 
$\xi\in X_k\setminus\{0\}$.
Let $K$ be a compact set containing the support of all
the functions $\xi_1,\ldots,\xi_k$. Since $u$ is unstable
outside $K$, there is a $C^1$ function $\xi_{k+1}$ with 
compact support
in $\R^6\setminus K$ for which $Q_u(\xi_{k+1})<0$.
Since $\xi_{k+1}$ has disjoint support with each of the functions
$\xi_1,\ldots,\xi_k$, it follows that $\xi_1,\ldots,\xi_k,\xi_{k+1}$
are linearly independent and that $Q_u(a_1\xi_1+\cdots+a_{k+1}\xi_{k+1})
=Q_u(a_1\xi_1+\cdots+a_{k}\xi_{k})+Q_u(a_{k+1}\xi_{k+1})<0$ for every
nonzero linear combination $a_1\xi_1+\cdots+a_{k+1}\xi_{k+1}$ of them.
We conclude that $u$ has infinite Morse index.
\end{proof}

\section{Asymptotic stability of $\usup$ in dimensions $2m\geq 8$}

In \cite{CT} we proved that the argument used to prove instability in
dimension~$4$ could not be used in higher dimensions (see also the
begining of the previous section). It is natural
to check whether the computations we used in the proof of the
previous theorem in dimension 6 lead to a instability result in higher
dimensions. This is not the case, even in dimension $8$. This
is an indication that saddle solutions might be stable in
dimension 8 or higher.

We recall the computations in the proof of the previous theorem. Using
the linearized equation \eqref{linz} and
$\xi_a=\xi=\eta(y/a)\usup_z(y,z)=\eta(\rho)\usup_z(a\rho,z)$, we
reached in \eqref{dim62}
\[
{\ds
c_mQ_{\usup}(\xi_a)=a^{2m-3}\int_{\{-a\rho<z<a\rho\}}\rho^{2(m-1)}(1-\frac{z^2}{a^2\rho^2})^{m-1}{\Big
(}\eta_{\rho}^2\psi^2{\usup}_z ^2-}
\]
\[
-\eta^2{\Big
\{}\frac{2(m-1)(1+\frac{z^2}{a^2\rho^2})}{\rho^2(1-\frac{z^2}{a^2\rho^2})^2}{\usup}_z^2
-
\frac{4(m-1)z}{a\rho^3(1-\frac{z^2}{a^2\rho^2})^2}{\usup}_y{\usup}_z{\Big
\}}{\Big )}d\rho dz,
\]
where $\eta=\eta(\rho)$ has compact support
$[\rho_1,\rho_2]\subset (0,\infty)$ and $\usup_y,\usup_z$ are evaluated
  at $(a\rho,z)$. Dividing by $a^{2m-3}$, we obtain
\begin{eqnarray*}
&& \hspace{-.8cm}
\frac{c_mQ_{\usup}(\xi_a)}{a^{2m-3}} =  \\
&&
=\int_{\{-a\rho < z< a\rho\}}
\rho^{2(m-1)}\left(1-\frac{z^2}{a^2\rho^2}\right)^{m-1}
\usup_z^2 \cdot \\
&&\qquad\qquad\qquad\qquad
\cdot \left\{\eta_\rho^2-
\frac{2(m-1)}{\rho^2}\frac{(1+\frac{z^2}{a^2\rho^2})}{(1-\frac{z^2}{a^2\rho^2})^{2}}\eta^2
\right)d\rho dz \\
&&\qquad \quad +\int_{\{-a\rho< z < a\rho\}}
\frac{4(m-1)z}{a}\rho^{2m-5}(1-\frac{z^2}{a^2\rho^2})^{m-3}{\usup}_y{\usup}_zd\rho dz.
\end{eqnarray*}

Using $\usup_z\geq 0$ in $\R^{2m}$ and the asymptotic limit of $|\usup_y|$ as in the
proof of instability in dimension~6 (see the previous section), 
the integrals above lead to
$$\limsup_{a\rightarrow\infty}\frac{c_mQ_{\usup}(\xi_a)}{a^{2m-3}}= 
\left\{\int_{-\infty}^{+\infty}\dot{u}_0^2(z)dz\right\}\int\rho^{2(m-1)}\left( \eta_\rho^2
-\frac{2(m-1)}{\rho^2}\eta^2 \right)d\rho. $$ 
The second integral may be
thought as an integral of a radial function in $\R^{2m-1}$. Using
Hardy's inequality (see e.g.~\cite{GP})
we have that this integral is nonnegative for all Lipschitz $\eta$ if and
only if $$2(m-1)\leq \frac{(2m-1-2)^2}{4}.$$
Writting $n=2m$, the above inequality holds if and only if
$$n^2-10n+17\geq 0,$$
that is,  $n\geq 8$. Thus, for $n\geq 8$ the inequality is true (it is even strict) 
and we conclude
some kind of asymptotic stability of $\usup$.

\end{document}